\newcommand\Int{\mathbf{N}}
\newcommand\oG{\overline{G}}
\newcommand\oV{\overline{V}}
\newcommand\mrurl[1]{MR. \url{http://www.ams.org/mathscinet-getitem?mr=#1}}
\newcommand\aurl[1]{\url{file:///data/ms/archive/#1}}
\newcommand\durl[1]{doi. \url{http:///dx.doi.org/#1}}
\newcommand\purl[1]{\url{file:///data/ms/print/#1}}
\newcommand\surl[1]{\url{file:///data/ms/screen/#1}}
\newcommand\zurl[1]{Zbl. \url{http://www.emis.de/zmath-item?#1}}
\newcommand\Aurl[1]{Arxiv. \url{http://arxiv.org/abs/#1}}
\newcommand\datver[1]{\def\datverp%
 {\par\boxed{\boxed{\text{#1; Run: \today}}}}}
\newcommand\boxb[1]{\square_b}
\newcommand\ff{\operatorname{ff}}
\numberwithin{equation}{section}
\newcommand\paperbody%
\newtheorem{lemma}{Lemma}
\newtheorem{proposition}{Proposition}
\newtheorem{corollary}{Corollary}
\newtheorem*{corollary*}{Corollary}
\newtheorem{theorem}{Theorem}
\newtheorem{non-theorem}{Non-Theorem}
\newtheorem*{conjecture*}{Conjecture}
\theoremstyle{remark}
\newtheorem{definition}{Definition}
\newtheorem{remark}{Remark}
\newtheorem{exam}{Example}
\newcommand\bV{\mathcal{V}_{\operatorname{b}}}
\newcommand\bo{\operatorname{b}}
\newcommand\scl{\operatorname{sl}}
\newcommand\ad{\operatorname{ad}}
\newcommand\bT{{}^{\bo}T}
\newcommand\cFTs{{}^{\Phi}\overline{T}\kern-1pt{}^*}
\newcommand\tY{\tilde{Y}}
\newcommand\FC{\mathfrak{C}}
\newcommand\FG{\mathfrak{G}}
\newcommand\FP{\mathfrak{P}}
\newcommand\FQ{\mathfrak{Q}}
\newcommand\FR{\mathfrak{R}}
\newcommand\cB{\mathcal{B}}
\newcommand\cD{\mathcal{D}}
\newcommand\cE{\mathcal{E}}
\newcommand\cF{\mathcal F}
\newcommand\cM{\mathcal M}
\newcommand\cS{\mathcal S}
\newcommand\cSp{{\mathcal S}'}
\newcommand\cG{\mathcal{G}}
\newcommand\cI{\mathcal{I}}
\newcommand\cP{\mathcal{P}}
\newcommand\cQ{\mathcal{Q}}
\newcommand\cR{\mathcal{R}}
\newcommand\cW{\mathcal{W}}
\newcommand\dcF{{}^2\kern-1.5pt\mathcal{F}}
\newcommand\DcA{{}^2\kern-3pt\mathcal{A}}
\newcommand\DcF{{}^2\kern-1.5pt\mathcal{F}}
\newcommand\DcL{{}^2\kern-1.5pt\mathcal{L}}
\newcommand\bbB{\mathbb B}
\newcommand\bbC{\mathbb C}
\newcommand\bbR{\mathbb R}
\newcommand\bbS{\mathbb S}
\newcommand\CI{{\mathcal{C}}^{\infty}}
\newcommand\Diag{\operatorname{Diag}}
\newcommand\Dff{\operatorname{Dff}}
\newcommand\cFNs{{}^{\Phi}\overline N\kern-1pt{}^*}
\newcommand\Id{\operatorname{Id}}
\newcommand\ci{${\mathcal{C}}^\infty$}
\newcommand\ha{\frac{1}{2}}
\newcommand\nul{\operatorname{null}}
\newcommand\oqu{\frac{1}{4}}
\newcommand\pa{\partial}
\newcommand\inn{\operatorname{int}}
\newcommand\Mand{\text{ and }}
\newcommand\Mat{\text{ at }}
\newcommand\Mif{\text{ if }}
\newcommand\Min{\text{ in }}
\newcommand\Mon{\text{ on }}
\newcommand\Mor{\text{ or }}
\newcommand\Msatisfy{\text{ satisfy }}
\newcommand\Mto{\text{ to }}
\newcommand\Mthen{\text{ then }}
\newcommand\Mwhere{\text{ where }}
\newcommand\ifitem[2]{\def\test{#2}\ifx\test\@empty\else\item #1#2\fi}
\newcommand{\dar}[1]{\ar@<2pt>[#1]\ar@<-2pt>[#1]}
\newcommand{\tar}[1]{\ar@<3pt>[#1]\ar@<-3pt>[#1]\ar[#1]}
\newcommand{\dscl}{\operatorname{2scl}}
\newcommand{\bof}{\operatorname{\bo\kern-1.5pt\phi}}
\begin{document}
\keywords{simplicial space, generalized product, b-fibration, p-embedding,
  pseudodifferential operator, semiclassical calculus, manifold with corners}
\subjclass{}
\title[GenProd]
{Generalized products and Semiclassical Quantization}
\author{Richard Melrose}
\address{Department of Mathematics, Massachusetts Institute of Technology}
\email{rbm@math.mit.edu}

\begin{abstract} The notion of a generalized product, refining that of a
  (symmetric and smooth) simplicial space is introduced and shown to imply
  the existence of an algebra of pseudodifferential operators. This
  encompasses many constructions of such algebras on manifolds with
  corners. The main examples discussed in detail here are related to the
  semiclassical (and adiabatic) calculus as used in the approach to a
  twisted form of the Atiyah-Singer index theorem in work with Is Singer
  and Mathai Varghese. Other examples will be discussed elsewhere.
\end{abstract}

\maketitle

\section*{Introduction}

One aim of this paper is to give a full discussion of the (well-known)
semiclassical calculus (see for example the book of Zworski
\cite{MR2952218}), as used in the context of index theory in
\cite{IndBunGer,MR2140985,MR2674880}. This includes `extensions' to the
semiclassical calculus such as the double semiclassical calculus for an
iterated fibration. The construction of these operator algebras follows a
pattern that has emerged over a long period with calculi such as the
b-calculus, the scattering calculus, the zero calculus and their many
variants. So rather than repeat the (non-automatic) form of these the
notion of a \emph{generalized product} is introduced and shown to determine
an algebra of operators. This conjecturally includes essentially all the
known algebras obtained this way, although that is not discussed here,
except for the b-calculus itself.

A generalized product is modeled on the fibre products of a fibration. The
notion is a refinement of that of a symmetric simplicial space; for the
topological concept see for instance \cite{MR1361886}. Abstractly we
consider a contravariant functor from the integers (or finite sets) to the
category of manifolds, here we are mainly interested in the target category
being that of the compact manifolds with corners. The morphisms of the
integers are all the maps from $J(m)$ to $J(n)$ where $J(m)=\{1,\dots,m\}.$
There are important additional constraints on the functor. Namely the
dimensions of the manifolds associated to the integers, written generically
$M[k],$ are required to be of the form $\dim M[k]=\mu+\kappa (k-1)$ for
integers $\mu$ and $\kappa.$ The maps associated to injections are required
to be b-fibrations and those associated to surjections to be
p-embeddings. These are special morphisms in the category of manifolds with
corners where the morphism are b-maps; the notion of a b-fibration is close
to, but crucially weaker than, that of a fibration. The precise definition
is given in \S\ref{GP} which also sets out the stronger notion of a
stretched product.

The multidiagonals, just called diagonals below, in a generalized product,
correspond to the surjective maps for $J(m)$ to $J(n)$ ($n<m$) and are of
particular interest as the carriers of the singularities of the kernels of
pseudodifferential operators. In Theorem~\ref{GP.217} it is shown that the
intersection properties of these diagonals are essentially the same as the
diagonals in a fibre product. In particular the fact that \emph{the}
diagonal, $D\subset M[2],$ is a p-embedded manifold allows the definition
of pseudodifferential operators in terms of kernels with conormal structure
at the diagonal, to be taken over directly. These operators are naturally
the `microlocalization' of a Lie algebroid over $M[1]$ although this is by
no means a general Lie algebroid; see the review article by Eckhard
Meinrenken \cite{2401.03034}. The precise global structure of the $M[k]$
remains to be elucidated.

Very special examples of generalized products arise from a compact Lie
group with $G[1]$ a point and the algebra resulting being the convolution
algebra. We conjecture that any real Lie group with connected centre arises
as the interior of a generalized product, i.e.\ has a `simplicial
compactification'. Two (essentially familiar) examples of this arise as the
compactifcations of the multiplicative group of the positive reals and the
translation group given by a vector space. These two cases are closely
connected to the b-calculus in the former case and the semiclassical (or
scattering) calculus in the latter.

Some results here, especially in \S\ref{BU} are derived from an
unfinished project with Michael Singer on the scattering stretched product;
I also thank him for comments on the manuscript. Helpful conversation with
Chris Kottke regarding generalized products are also happily acknowledged.

The notion of a generalized product is introduced in \S\ref{GP}. The
category of manifolds with corners is briefly described in \S\ref{MWC} and
the basic properties of blow-up of p-submanifolds is recalled in
\S\ref{BU}. This is applied inductively to give the resolution of a pic
(intersection-closed p-clean collection of submanifolds) in size order in
\S\ref{Res-size}. In \S\ref{Res-int} it is shown that the same resolution
arises from blowing up in an intersection order. The properties of
diagonals in a generalized product are discussed in \S\ref{MD} and the Lie
algebroid associated to a generalized product is derived in
\S\ref{LA}. This Lie algebroid is quantized to an alebra of
pseudodifferential operators in \S\ref{Ops}. In \S\ref{SGP} a semiclassical
extension on any generalized product is introduced and used to show the
existence of an adiabatic stretched product associated to fibration. The
fundamental b-streched product can be found in \S\ref{BSP}. The double
semiclassical product, corresponding to an iterated fibration, in
\S\ref{DSST} arises in the semiclassical proof of the Atiyah-Singer index
theorem in work with Singer and Varghese.

\paperbody
\section{Generalized and stretched products}\label{GP}

An important example of a simplicial space arises from a smooth fibre
bundle 
\begin{equation}
\phi:M\longrightarrow Y.
\label{GP.120}\end{equation}
In case $M$ is a compact manifold without boundary, a submersion
induces such a fibre bundle. In the case of non-compact manifolds and also in
the category of compact manifolds with corners, recalled below, it is
necessary to require more in order that $\phi$ have local trivializations
over a suitable cover of $Y.$ Given this, the fibre products form a
simplicial space
\begin{equation}
\xymatrix{
M&\dar{l} M^{[2]}_\phi&\tar{l} M^{[3]}_{\phi}\cdots.
}
\label{2.7.2023.1}\end{equation}
Here the maps are each a projection off one of the factors.

Going somewhat beyond the standard notion of a simplicial space, observe
that these maps and spaces correspond to part of a contravariant functor
from the category with objects the integers $\Int,$ identified as the
collection of sets $J(n)=\{1,\dots,n\},$ and with morphisms all the maps
from $J(n)$ to $J(m),$ (rather than just the order-preserving maps) to the
category of manifolds. Thus if
\begin{equation}%
\begin{gathered}
I:\{1,\dots,n\}\longrightarrow \{1,\dots,m\}\Mthen\\
S_I:M^{[m]}_{\phi}\longrightarrow
M^{[n]}_{\phi},\ S_I(p_1,\dots,p_m)=(p_{I(1)},\dots,p_{I(n)}).
\end{gathered}
\label{SCL.47}\end{equation}
The maps $S_I$ corresponding to injective maps $I$ are fibrations, namely
up to relabelling the various projections off factors in
\eqref{2.7.2023.1}. The maps corresponding to surjective maps $I$ are the
embeddings of `multidiagonals'. However in the case of fibres which have
boundary these do not satisfy the stronger condition we will require of
being p-embeddings, that near boundary points there is a common local
product decomposition of manifold and submanifold.

For instance for $n=2$ and $m=1$ so $I(1)=I(2)=1,$  
\begin{equation}
S_I:M\ni p\longrightarrow (p,p)\in M^{[2]}_{\phi}
\label{SCL.48}\end{equation}
is the embedding of the diagonal in the fibre product. If the fibre has a
boundary then the diagonal passes through the corner and does so in a
manner which precludes a common product decomposition of manifold and
submanifold. This is a particular issue for the kernels of
pseudodifferential operators.


So, for the definition, note that in the category of (usually compact)
manifolds with corners the morphisms are b-maps and there are special
surjective maps, the b-fibrations, and special injective maps, the
p-embeddings. This is all discussed in \S\ref{MWC} below.

\begin{definition}\label{SCL.43} A \emph{generalized product} of manifolds
  (with corners) is a contravariant functor from the category $\Int,$ with
  morphisms \emph{all} the maps from $J(n)$ to $J(m),$ to the category of
  manifolds with corners, with objects denoted $M[k],$ $k\in\Int,$ $\dim
  M[k]=\mu+(k-1)\kappa,$ morphisms the interior b-maps and under the functor
  injective maps are taken to b-fibrations and surjective maps to
  p-embeddings; all manifolds (including the fibres of b-fibrations) are
  assumed to be connected.
\end{definition}

\noindent We call the images $S_I:M[m]\longrightarrow M[n]$ of the maps
$I:J(m)\longrightarrow J(n)$ the \emph{structure maps}.

The main interest lies in the compact case; the assumption of connectedness
is made to simplify the discussion below, it is not really crucial.

\begin{definition}\label{GP.234} A generalized product is said to be
  \emph{compact} if all the spaces $M[k]$ are compact (and connected)
  and the structural b-fibrations have connected fibres.
\end{definition}

The permutations $\sigma:J(n)\longrightarrow J(n)$ for each $n$ must induce
maps which are both b-fibrations and p-embeddings and hence are
diffeomorphisms, the structural diffeomorphisms. Thus the corresponding
$S_\sigma$ acting on $M[n]$ induce an action of the permutation group.

\begin{lemma}\label{GP.127}
A generalized product is symmetric.
\end{lemma}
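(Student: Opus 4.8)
The plan is to extract the symmetric structure directly from the functoriality already built into Definition~\ref{SCL.43}, so that nothing substantial remains to be checked beyond recognising the maps attached to permutations as genuine diffeomorphisms.

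First I would record that for each $n$ the permutations of $J(n)$ are exactly the isomorphisms $J(n)\to J(n)$ in $\Int$. Applying the contravariant functor, $S_\sigma\circ S_{\sigma^{-1}}=S_{\sigma^{-1}\circ\sigma}=\Id$ and symmetrically, so $S_\sigma$ is invertible in the category of manifolds with corners, with inverse $S_{\sigma^{-1}}$. Since a permutation is at once injective and surjective, the definition forces $S_\sigma\colon M[n]\to M[n]$ to be simultaneously a b-fibration and a p-embedding; as its source and target are connected and of the same dimension $\mu+(n-1)\kappa$, the properties of b-fibrations and p-embeddings assembled in \S\ref{MWC} (and already invoked in the paragraph preceding the statement) identify such a map as a diffeomorphism. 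Thus $\sigma\mapsto S_\sigma$ gives, for each $n$, an action of the permutation group of $J(n)$ on $M[n]$ by diffeomorphisms, the structural diffeomorphisms.

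Next I would observe that the category $\Int$ with all maps is generated by its subcategory of order-preserving maps together with these permutations: every map of finite sets is a surjection followed by an injection, and each injection (resp.\ surjection) is an order-preserving injection (resp.\ surjection) precomposed with a permutation of its source. Hence the full functor $S_{(-)}$ is determined by its restriction to the order-preserving maps — the underlying simplicial space, namely the spaces $M[k]$ together with the structure maps $S_I$ for $I$ order-preserving — and the structural diffeomorphisms, with every relation between these an instance of the functoriality identity $S_{I'\circ I}=S_I\circ S_{I'}$. These identities are precisely the coherences that promote a simplicial space equipped with compatible permutation actions to a \emph{symmetric} simplicial space in the sense of \cite{MR1361886}; hence a generalized product — viewed through its spaces $M[k]$ and structure maps — is symmetric.

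I expect no genuine obstacle. The only non-formal input is the assertion that a map between connected (compact) manifolds with corners which is simultaneously a b-fibration and a p-embedding is a diffeomorphism, and this belongs to \S\ref{MWC}; the remainder is bookkeeping, matching functoriality in $\Int$ against the definition of a symmetric simplicial space. The point of isolating the lemma is to have the symmetric structure on hand for later use — in particular the transposition of $M[2]$ underlying the formal adjoint of a pseudodifferential operator, and the invariance under it of the diagonal $D\subset M[2]$.
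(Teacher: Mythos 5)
Your argument is correct and matches the paper's own justification, which is the sentence immediately preceding the lemma: a permutation is both injective and surjective, so $S_\sigma$ is simultaneously a b-fibration and a p-embedding, hence a diffeomorphism, and functoriality makes $\sigma\mapsto S_\sigma$ a permutation-group action on each $M[n]$. Your additional observation that contravariant functoriality already gives $S_{\sigma^{-1}}\circ S_\sigma=S_{\sigma\circ\sigma^{-1}}=\Id$ is a small simplification (it yields invertibility without appealing to any fact from \S\ref{MWC}), but it is not a different route, just a cleaner way to the same conclusion.
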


In any generalized product the minimal, or $k$-fold, diagonal in $M[k]$ is
the p-submanifold $D=D_k:M[1]\longrightarrow M[k]$ corresponding to the
unique map $J(k)\longrightarrow J(1).$ Consider one of the $k$ b-fibrations
$\Pi:M[k]\longrightarrow M[1]$ arising from a map $J(1)\longrightarrow
J(k).$ Since the composite $J(1)\longrightarrow J(k)\longrightarrow
J(1)$ is the identity $\Pi$ restricts to the identity map on the image of
$D_k.$ If $H$ is a boundary hypersurface of $M[1]$ it follows that the
inverse image $\Pi^{-1}(H)$ is a union of boundary hypersurfaces of $M[k]$
exactly one of which meets $D_k$ (since it is an interior p-submanifold)
and does so in $H.$ We denote this boundary hypersurface as $H[k].$

\begin{proposition}\label{GP.136} If $M[k]$ is a generalized product and
  $H$ is a boundary hypersurface of $M[1]$ the boundary hypersurfaces
  $H[k]$ have an induced structure as a generalized product.
\end{proposition}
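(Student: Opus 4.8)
The plan is to construct the generalized-product structure on the family $\{H[k]\}_{k\in\Int}$ by restriction of the functor defining $M[\bullet]$, and then to verify that all the structural requirements of Definition~\ref{SCL.43} survive this restriction. First I would fix the boundary hypersurface $H\subset M[1]$ and, for each $k$, recall the distinguished hypersurface $H[k]\subset M[k]$: it is the unique component of $\Pi^{-1}(H)$ meeting $D_k$, and (by the discussion just before the proposition) it meets $D_k$ precisely in $H$. The first point to nail down is that $H[k]$ is intrinsically well defined, i.e.\ independent of which of the $k$ projections $\Pi:M[k]\to M[1]$ we use. For this I would use symmetry (Lemma~\ref{GP.127}): the permutation action of $S_k$ on $M[k]$ permutes the $k$ projections, so it must permute the corresponding hypersurfaces; since each of them meets $D_k$ in $H$ and $D_k$ is an interior p-submanifold, only one hypersurface of $M[k]$ has this property, so the $S_k$-action fixes $H[k]$ and the definition is canonical. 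In particular the permutations $S_\sigma$ restrict to diffeomorphisms of $H[k]$, which already gives the symmetry (Lemma~\ref{GP.127}) for the restricted functor.

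Next I would define the restricted functor. For a map $I:J(n)\to J(m)$ with structure map $S_I:M[m]\to M[n]$, I claim $S_I(H[m])\subseteq H[n]$ and more precisely $S_I$ restricts to a map $H[m]\to H[n]$; I would prove this by naturality, composing $I$ with the unique $J(m)\to J(1)$ and $J(n)\to J(1)$ and with a section $J(1)\to J(m)$, so that the relevant projections $\Pi$ on $M[m]$ and $M[n]$ intertwine $S_I$, whence $S_I$ carries the component of $\Pi_m^{-1}(H)$ through $D_m$ into the component of $\Pi_n^{-1}(H)$ through $D_n$. Functoriality of the restriction is then immediate from functoriality of $S_\bullet$. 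The dimension condition is the easy bookkeeping step: $H[k]$ is a boundary hypersurface of $M[k]$, so $\dim H[k]=\dim M[k]-1=(\mu-1)+(k-1)\kappa$, so one takes $\mu'=\mu-1$ and $\kappa'=\kappa$. Connectedness of $H[k]$ (and of the fibres of the restricted b-fibrations) I would obtain from the compact-case hypotheses together with the fact that $H[k]$ is itself one boundary hypersurface — here the connectedness assumption in Definition~\ref{SCL.43} is used, exactly as remarked there that it is made "to simplify the discussion."

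The real content is showing that the restricted structure maps are again b-fibrations (for $I$ injective) and p-embeddings (for $I$ surjective). This is the step I expect to be the main obstacle, and it is where one must actually invoke the boundary theory of \S\ref{MWC}. For the p-embedding case I would argue that $D_m:M[1]\to M[m]$ is a p-submanifold meeting the boundary nicely, and that restricting a p-embedded diagonal to the boundary hypersurface $H[m]$ again yields a p-embedding $H\to H[m]$; more generally, for a surjection $I$, the image $S_I(M[n])\subset M[m]$ is p-embedded and transversal to $H[m]$ in the appropriate b-sense, so its trace on $H[m]$ is p-embedded in $H[m]$ — this needs the local product structure of a p-embedding near a corner to be compatible with the boundary hypersurface $H[m]$, which it is because $H[m]$ meets the submanifold cleanly (it meets $D_m$ in $H$, an interior p-submanifold of $H$-type). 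For the b-fibration case, I would use that restriction of a b-fibration $f:M[m]\to M[n]$ to a boundary hypersurface which is "$f$-saturated" in the relevant sense (here $H[m]$ is the component of $f^{-1}(H[n])$ singled out above, and $f$ maps it onto $H[n]$) is again a b-fibration onto $H[n]$; this is a standard property of b-fibrations and the only subtlety is checking that the exponent/lifting conditions for b-fibrations pass to $H[m]\to H[n]$, which follows from the compatibility of $H[m]$ with the boundary face structure that $f$ induces. Granting these two restriction lemmas — which are the heart of the matter and which I would either cite from \S\ref{MWC} or establish there — the verification of Definition~\ref{SCL.43} for $\{H[k]\}$ is complete, proving the proposition.
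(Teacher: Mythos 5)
Your argument is correct and follows essentially the same route as the paper: in both, the key observation is that composing with the unique map $J(\cdot)\to J(1)$ shows every structure map carries the diagonal $D_m$ onto $D_n$ (as the identity on $M[1]$) and hence carries $H[m]$ into $H[n]$, with the dimension count and the permutation action handled exactly as you do. The paper's own proof is in fact terser than yours, leaving the restriction statements (that the structural b-fibrations and p-embeddings restrict to such maps between the hypersurfaces $H[m]$ and $H[n]$) entirely implicit, whereas you at least identify them as the step requiring the local theory of \S\ref{MWC}.
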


\begin{proof} Clearly $\dim H(k)=(\mu -1)+\kappa (k-1).$ The composite of
  any map $J(n)\longrightarrow J(m)$ with $J(m)\longrightarrow J(1)$ is the
  unique map $J(n)\longrightarrow J(1).$ Taking $m=n=k$ it follows that any
  of the structure diffeomorphisms map the maximal diagonal $D_k$ into
  itself as the identity. As diffeomorphisms they map boundary
  hypersurfaces to boundary hypersurfaces so map $H[k]$ into itself giving
  the permutation action on $H[k].$

Similarly for any of the structure maps $S_I$ for $I:J(n)\longrightarrow
J(m)$ the composite with $J(m)\longrightarrow J(1)$ is the map
$J(n)\longrightarrow J(1)$ so $S_ID_m=D_n.$ Thus $S_I$ maps the image
of $D_m$ to the image of $D_n$ as the identity. Again it maps boundary
hypersurfaces into boundary faces so must map $H[n]$ to $H[m].$ So each of
the structure maps takes $H[m]$ into $H[n]$ and the functorial properties
follow.
\end{proof}

Since the spaces $M[1],$ $M[2]$ and $M[3]$ are particularly important we
use special notation for some of the structure maps. Thus 
\begin{equation}
\begin{gathered}
\Pi_L,\ \Pi_R:M[2]\longrightarrow M[1]\Mand\\
\Pi_S,\ \Pi_C,\ \Pi_F:M[3]\longrightarrow M[2]
\end{gathered}
\label{GP.317}\end{equation}
are the b-fibrations which are the images of the maps $L,$
$R:J(1)\longrightarrow J(2),$ $L(1)=1,$ $R(1)=2$ and $S,$ $C,$
$F:J(2)\longrightarrow J(3)$ where $S(1)=1,$ $S(2)=2,$ $C(1)=1,$ $C(2)=3$
and $F(1)=2,$ $F(2)=3.$ Similarly the partial diagonals are the p-embeddings
\begin{multline}
D_{i,j}:M[2]\longrightarrow M[3],\ i\not=j,\\
\text{corresponding to the
  surjective maps taking }\{i,j\}\Mto1
\label{GP.318}\end{multline}
and $R_{i,j}$ for $i\not=j$ correspond to the reflections exchanging $i$
and $j.$ The single reflection on $M[2]$ is denoted $R.$

By assumption all the structure maps of a generalized product are interior
b-maps and so have the property that the inverse image of the boundary of
the range space is contained in the boundary of the domain. It follows that
they map the interior into the interior in all cases.

\begin{lemma}\label{GP.275} The interiors $\inn(M[k])$ have an induced
  structure as a generalized product.
\end{lemma}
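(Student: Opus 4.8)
The plan is to verify that the assignment $k\mapsto\inn(M[k])$, together with the restrictions $S_I|_{\inn}$ of the structure maps, satisfies Definition~\ref{SCL.43} in the corner-free subcategory of boundaryless manifolds. Most clauses will be inherited mechanically from the $M[k]$, and I expect the only substantive points to be that passing to interiors preserves the surjectivity of the structural b-fibrations and does not truncate the structural p-embeddings.

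First I would dispose of the routine inheritances. Each $M[k]$ is a connected manifold with corners, so $\inn(M[k])$ is a connected boundaryless manifold of the same dimension $\mu+(k-1)\kappa$. By the remark just preceding the lemma every structure map $S_I$ is an interior b-map and hence restricts to a smooth map $S_I\colon\inn(M[m])\to\inn(M[n])$; since restriction to interiors commutes with composition and fixes identities, $k\mapsto\inn(M[k])$ is again a contravariant functor on $\Int$, and the structural permutations, being diffeomorphisms, restrict to diffeomorphisms, so the induced functor is symmetric by Lemma~\ref{GP.127}. Factoring a general $I$ as $\iota\circ\pi$ with $\pi$ surjective and $\iota$ injective writes $S_I$ as a structural p-embedding composed with a structural b-fibration, and restricting to interiors yields the corresponding factorisation of $S_I|_{\inn}$, so it suffices to treat injective and surjective $I$ separately.

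For an injection $I\colon J(n)\hookrightarrow J(m)$ I would pick a left inverse $I'\colon J(m)\to J(n)$, so that $S_I\circ S_{I'}=S_{\operatorname{id}}=\operatorname{id}_{M[n]}$; since $S_{I'}$ is also an interior b-map, this identity restricts to the interiors and shows $S_I|_{\inn}\colon\inn(M[m])\to\inn(M[n])$ is onto. It is a submersion because $S_I$ is a b-submersion and at interior points the b-tangent spaces coincide with the ordinary tangent spaces, so the b-differential there is the usual differential; thus $S_I|_{\inn}$ is a surjective submersion of boundaryless manifolds, which is exactly what a b-fibration is in this category, and since the fibres of $S_I$ over interior points of the base are connected p-submanifolds meeting $\pa M[m]$ only in their own boundary, their interiors — which are the fibres of $S_I|_{\inn}$ — remain connected. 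For a surjection $I\colon J(m)\twoheadrightarrow J(n)$, the image $S_I(M[n])$ is a p-submanifold of $M[m]$; it cannot lie in a boundary hypersurface $H$, since then $S_I^{-1}(\pa M[m])\supseteq S_I^{-1}(H)=M[n]$ would contradict $S_I$ being an interior b-map, so $S_I(M[n])$ is an \emph{interior} p-submanifold. Consequently $\pa\big(S_I(M[n])\big)=S_I(M[n])\cap\pa M[m]$, whence $S_I(\inn(M[n]))=S_I(M[n])\cap\inn(M[m])$ is an embedded submanifold of $\inn(M[m])$ onto which $S_I$ restricts diffeomorphically, i.e.\ a p-embedding in the boundaryless setting.

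Assembling these observations gives every requirement of Definition~\ref{SCL.43} for the restricted functor, which is the lemma. I expect the main obstacle to be precisely the last two points: ruling out a boundary point of $M[n]$ mapping into the interior of $M[m]$ (equivalently, showing the structural p-submanifolds meet the boundary ``fully''), and showing that restriction does not shrink the base of a structural b-fibration — both of which rely essentially on the hypothesis that the structure maps are \emph{interior} b-maps rather than arbitrary b-maps.
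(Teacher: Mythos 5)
Your proof is correct and takes essentially the route the paper intends: the lemma is stated there without a separate proof, resting on the immediately preceding observation that the structure maps, being interior b-maps, restrict to maps of the interiors, with the remaining verifications treated as immediate. The extra points you check explicitly --- surjectivity via a left inverse $I'$ with $S_I\circ S_{I'}=\operatorname{id}$, that a b-fibration restricts to a surjective submersion with connected fibres on the interiors, and that the image of a structural p-embedding is an \emph{interior} p-submanifold so that restriction does not truncate it --- are all sound and consistent with the paper's definitions.
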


So generalized products can be thought of as compactifications of their
interiors. Restricted to the interiors consider the maps $\Pi_k=S_I,$
$I:J(k-1)\longrightarrow J(k)$ being the strictly increasing map $I(j)=j$
and $D_{1,2}=S_L$ $L:J(k)\longrightarrow J(k-1)$ being, for general $k\ge3,$ the surjective
increasing map with $L(1)=L(2)=1.$ Then
\begin{equation}
\begin{gathered}
\Pi_k:\inn M[k]\longrightarrow \inn M[k-1]\text{ is a fibration and}\\
D_{1,2}:\inn M[k-1]\longrightarrow \inn M[k]\text{ is an embedding for each }k.
\end{gathered}
\label{GP.79}\end{equation}

Conversely
\begin{lemma}\label{GP.81} If $M[k]$ are compact manifolds with
  corners with interiors which form a generalized product then to show that
  the $M[k]$ form a generalized product, and hence a compactification, it
  suffices to show that the structural diffeomorphisms
  extend to diffeomorphisms of each $M[k],$ the fibrations
  $\Pi_k:M[k]\longrightarrow M[k-1]$ extend to b-fibrations and the embeddings
  $D_{1,2}:M[k-1]\longrightarrow M[k]$ extend to p-embeddings.
\end{lemma}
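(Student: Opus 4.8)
The plan is to reduce the verification of the full functor axioms on the compact spaces $M[k]$ to the three classes of morphisms named in the statement — the structural diffeomorphisms $S_\sigma$, the increasing fibrations $\Pi_k$, and the increasing partial diagonals $D_{1,2}$ — and then invoke uniqueness of smooth extensions. First I would observe that every morphism $I\colon J(n)\to J(m)$ in $\Int$ factors as a surjection followed by an injection, $I = \iota\circ s$; the surjection is a composite of elementary surjections (each identifying two adjacent elements) up to permutation, and the injection is a composite of elementary injections (each omitting one element) up to permutation. Hence every structure map $S_I$ is a composite of maps of the form $S_\sigma$, $\Pi_k$, and $D_{1,2}$. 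Since by hypothesis each of these generators extends from $\inn M[k]$ to a b-map of the appropriate compact $M[k]$ — a diffeomorphism, a b-fibration, or a p-embedding respectively — and since composites of interior b-maps are interior b-maps, composites of b-fibrations are b-fibrations, and composites of p-embeddings are p-embeddings (all recalled in \S\ref{MWC}), every $S_I$ extends to an interior b-map of the required type.

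The next step is to check the functor identities on the compact spaces. Here the point is that $\inn M[k]$ is dense in $M[k]$ and $M[k]$ is a manifold with corners, so a smooth (in particular b-) map out of $M[k]$ is determined by its restriction to $\inn M[k]$; two extensions of the same interior map agree. Thus, given two morphisms $I$ and $I'$ that compose in $\Int$, the two candidate extensions of $S_{I'\circ I}$ — namely the direct extension of that structure map and the composite $S_I\circ S_{I'}$ of the extensions — restrict to the same map on $\inn M[k]$ by Lemma~\ref{GP.275}, hence coincide on all of $M[k]$. The same density argument shows the extension is unique, so there is no ambiguity in the assignment $I\mapsto S_I$, and in particular the identity of $J(k)$ goes to the identity of $M[k]$. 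This establishes that the $M[k]$, with the extended maps, form a contravariant functor from $\Int$ to compact manifolds with corners; combined with the previous paragraph it meets all the clauses of Definition~\ref{SCL.43}, so it is a generalized product, and by Lemma~\ref{GP.275} it compactifies the given interior generalized product.

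The one genuine subtlety — the step I expect to be the main obstacle — is verifying that the various generators and their composites have pairwise the correct transversality/product compatibility so that a \emph{composite} of b-fibrations that happens to equal a single structure map is still a b-fibration (rather than merely a b-map), and dually for p-embeddings; in the category of manifolds with corners, b-fibrations are not closed under composition without a clean-intersection hypothesis, and p-embeddedness of a composite $D_{1,2}\circ D_{1,2}$ (giving a higher diagonal) requires a common product decomposition at the corners. This is exactly the content that has already been extracted as Theorem~\ref{GP.217} on the intersection properties of the diagonals and as the b-fibration machinery of \S\ref{MWC}: the structural diffeomorphisms permute the relevant boundary hypersurfaces compatibly, so one reduces to a fixed ordering, and then the increasing fibrations and the increasing diagonal sit in the standard simplicial pattern where the needed transversality holds by construction of the fibre product model. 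Granting those two cited results, the compatibility is automatic and the proof is just the factorization-plus-density argument above; I would write it in that order, flagging the appeal to Theorem~\ref{GP.217} at the point where composites of diagonals enter.
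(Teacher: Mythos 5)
Your first two paragraphs are exactly the paper's argument: factor any $I:J(n)\to J(m)$ through its image as a surjection followed by an injection, reduce modulo permutations to the elementary injections giving the $\Pi_k$ and the elementary surjections giving the $D_{1,2}$, and note that the functorial identities for the extended maps are forced by their restrictions to the dense interiors. Up to that point the proposal is correct and takes the same route as the paper (which leaves the density remark implicit).

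The third paragraph, however, misdiagnoses the situation and, as written, makes the proof circular. First, the claimed difficulty is not there: in the category recalled in \S\ref{MWC}, b-fibrations \emph{are} closed under composition without any clean-intersection hypothesis (the b-differential of a composite is the composite of the b-differentials, so b-submersions compose, and b-normality composes since the exponent matrix of a composite is the product of the exponent matrices), and likewise a p-submanifold of a p-submanifold is a p-submanifold (use the local product/collar decomposition of $M[k]$ along the p-embedded $D_{1,2}(M[k-1])$ to extend adapted coordinates from the submanifold to the ambient space), so composites of p-embeddings are p-embeddings. These elementary closure facts are all the "compatibility" the factorization argument needs; no transversality of diagonals enters. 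Second, the patch you propose -- "granting Theorem~\ref{GP.217} the compatibility is automatic" -- cannot be used here: Theorem~\ref{GP.217} is a statement about a \emph{compact generalized product}, i.e.\ it presupposes precisely the structure on the compact spaces $M[k]$ that Lemma~\ref{GP.81} is meant to establish (and applying it instead to the given interior product would say nothing about the corner behaviour in any case). So you should delete the appeal to Theorem~\ref{GP.217} and replace it by the two composition statements above; with that change your argument coincides with the paper's proof and is complete.
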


\begin{proof} For any map $I:J(n)\longrightarrow J(m)$ suppose the range
  has $k$ elements, then it can be identified with $J(k)$ and $I$ is the
  composite of a surjection to $J(k)$ and an injection from $J(k).$ So the functorial properties
  imply that it suffices to show the existence of all the $S_I$ for
  surjections and injections and the special case of bijections, the others
  can then be obtained by composition. So, assuming the diffeomorphisms
  giving the action of the permutation group on the interior extend to be
  smooth b-maps, symmetry follows for the $M[k].$

After composition with permutations on both sides, an injection is reduced
to a map $I:J(n)\longrightarrow J(m)$ where $I(j)=j.$ These in turn are
products of the maps $I:J(n)\longrightarrow J(n+1),$ $I(j)=j.$ Similarly
the surjections can be decomposed into products of permutations and the
maps corresponding to the $D_{1,2}.$
\end{proof}

As is easily seen
\begin{lemma}\label{GP.277} The product $M[k]\times N[k]$ of two
  generalized products is a generalized product.
\end{lemma}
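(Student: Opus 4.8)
The plan is to define the product functor directly and reduce the verification entirely to the behaviour of b-fibrations, p-embeddings and interior b-maps under cartesian products.

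First, the dimension constraint. Writing $\dim M[k]=\mu_M+(k-1)\kappa_M$ and $\dim N[k]=\mu_N+(k-1)\kappa_N,$ we get $\dim(M[k]\times N[k])=(\mu_M+\mu_N)+(k-1)(\kappa_M+\kappa_N),$ which has the required form with $\mu=\mu_M+\mu_N$ and $\kappa=\kappa_M+\kappa_N.$ Next, for a map $I:J(n)\longrightarrow J(m)$ define
\begin{equation*}
S_I=S_I^M\times S_I^N:M[m]\times N[m]\longrightarrow M[n]\times N[n],
\end{equation*}
where $S^M,$ $S^N$ denote the structure maps of the two given generalized products. Since $(f\times g)\circ(f'\times g')=(f\circ f')\times(g\circ g')$ and $\Id\times\Id=\Id,$ this is a contravariant functor from $\Int$ with the objects $M[k]\times N[k].$

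It remains to check the three conditions on the morphisms. The product of two manifolds with corners is again a manifold with corners, its boundary hypersurfaces being the $H\times N[k]$ for $H$ a boundary hypersurface of $M[k]$ together with the $M[k]\times H'$ for $H'$ a boundary hypersurface of $N[k].$ From the definitions recalled in \S\ref{MWC} one has that a cartesian product of two interior b-maps is an interior b-map, a product of two b-fibrations is a b-fibration (with fibre the product of the fibres of the factors), and a product of two p-embeddings is a p-embedding, a common local product decomposition for each of the two factors yielding one for the product. Applying these to an injective $I,$ for which $S_I^M$ and $S_I^N$ are both b-fibrations, shows $S_I$ is a b-fibration; applying them to a surjective $I$ shows $S_I$ is a p-embedding; and all $S_I$ are interior b-maps. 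Finally $M[k]\times N[k]$ is connected, and so are the fibres of the $S_I$ for injective $I,$ since a product of connected spaces is connected; thus the compactness and connectedness conditions of Definitions~\ref{SCL.43} and~\ref{GP.234} are inherited.

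The only point needing care is the assertion that the product of two b-fibrations is a b-fibration — the other two product statements being immediate from local normal forms. This is however part of the standard calculus of b-maps of manifolds with corners in \S\ref{MWC}: the b-normal surjectivity and the requirement that the preimage of each boundary hypersurface be a union of boundary hypersurfaces, each mapped onto it by a b-submersion, are both visibly preserved when passing to products.
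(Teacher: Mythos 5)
Your proof is correct. The paper offers no argument here at all (the lemma is prefaced only by ``As is easily seen''), and your componentwise construction of the product functor, together with the observation that interior b-maps, b-fibrations and p-embeddings — and connectedness of fibres — are all preserved under cartesian products, is exactly the verification that remark leaves implicit.
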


\noindent However the product may not be the most natural compactification
of the products of the interiors.

\begin{exam}\label{GP.294} The basic example of a generalized product
  arises, as noted above, as the fibre products $M[k;\phi]=M^{[k]}$ of a
  fibration of compact manifolds \eqref{GP.120}. This `resolves' (and in
  consequence `quantizes') the Lie algebra of fibre vector fields on $M.$
  These form the Lie algebroid over $M=M[1]$ formed by the sections of the
  null bundle, $\nul(\phi_*),$ over $M=M[1].$

For later reference we give a `simplicial' construction of this Lie algebra
structure using $M[2]$ and $M[3].$ The two projections $\pi_L,$
$\pi_R:M[2]\longrightarrow M[1]$ are fibrations so the null bundles of
$(\pi_L)_*$ and $(\pi_R)_*$ are subbundles of $TM[2],$ namely the right and
left fibre tangents. We identify the bundle $\nul(\phi_*)$ over $M[1]$
more abstractly as a vector bundle over $M[1]$ 
\begin{equation}
E=D^*\nul((\pi_R)_*).
\label{GP.295}\end{equation}
Clearly $(\pi_L)_*$ identifes $\nul((\pi_R)_*)$ over the (image of) the
diagonal with $\nul(\phi_*).$ 

Consider the following indirect, but more simplicial construction of the
Lie algebra structure on $\CI(M[1];E).$ Starting from a section $V\in
\CI(M[1];E)$ we first identify it as a section of $\nul((\pi)_R)_*)$
over $D.$
\begin{enumerate}
\item The projection $\Pi_S$ maps $D_{1,2}$ as a fibration onto $D.$ The null
  space of the differential $\nul((\Pi_F)_*)$ over $D_{1,2}$ is mapped by
  $(\Pi_S)_*$ isomorphically onto $\nul((\pi_R)_*$ at each point of
  $D_{1,2}$ so $V$ lifts under $(\Pi_S)_*$ to a smooth section
  $W\in\CI(D_{1,2};\nul((\pi_F)_*)).$
\item The fibration $\Pi_C$ restricts to a diffeomorphism
  $D_{1,2}\longrightarrow M[2]$ and $(\Pi_C)_*$ maps
  $\nul((\pi_F)_*))$ isomorphically onto $\nul((\Pi)_R)_*)$ over $M[2].$
  This defines $\tilde V=(\Pi_C)_*W\in\CI(M[2];\nul((\Pi)_R)_*).$ The
  restriction $\tilde V\big|_D=V.$
\end{enumerate}

These claims are easily verified in a local trivialization of $\phi$ over
$M$ with base coordinates $y$ and fibre coordinates written $(z,z',z'')$ in
$M[3]$ and $(z',z'')$ in $M[2]$ induced by fibre coordinates $z$ in $M.$
Then $V=\sum\limits_{i}v_i(y,z')\cdot\pa_{z'}$ as a section of
$\nul((\pi_R)_*$ over $D=\{z'=z''\}.$ The pull-back to $D_{1,2}$ as a
section of $\nul((\pi_F)_*)$ is
\begin{equation*}
W=\sum\limits_{i}v_i(y,z)\cdot\pa_{z}. 
\label{GP.296}\end{equation*}
This pushes forward under $(\pi_C)_*$ to the vector field 
\begin{equation*}
\tilde V=\sum\limits_{i}v_i(y,z')\cdot\pa_{z'}
\label{GP.297}\end{equation*}
on $M[2]$ which extends the original vector field $V$ to be fibre constant
on the `right' fibres of $M[2].$ These form a Lie subalgebra of the
sections of $\nul((\pi_R)_*)$ over $M[2]$ (which can depend on the `right'
variable $z'').$ This defines a Lie algebra structure on $\CI(M[1];E)$ which
is identified with with the obvious structure of fibre vector fields on
$M.$

Whilst trivial in this case, this indirect construction is the basis of the
identification of the Lie algebroid structure in the wider context of a
generalized product in \S\ref{LA} below.
\end{exam}

\begin{exam}\label{GP.156} Let $G$ be a connected Lie group. Then the products
  $G^{k}$ for $k=1,2,\dots,$ form a generalized product (not using the
  group structure) which is equivariant under the right action of $G$ via
\begin{equation*}
G^k\times G\ni((g_1,\dots,g_k),g)\longrightarrow
(g_1g^{-1},\dots,g_kg^{-1})\in G^k,
\label{GP.137}\end{equation*}
so $S_I(\gamma g^{-1})=S_I(\gamma )\cdot g^{-1}$ for all $I.$ The quotients  
\begin{equation}
G[k]=G^k/G
\label{GP.138}\end{equation}
therefore form a generalized product.

For instance the three symmetries on $G^2$ corresponding to the involutions
exchanging the first two, first and third and second and third elements are 
\begin{multline}
R_{1,2}(a,b)=(b,a),\ R_{1,3}(a,b)=(a^{-1},ba^{-1}),\
R_{2,3}(a,b)=(ab^{-1},b^{-1})\\
\Mon G[3]=G^2.
\label{GP.140}\end{multline}
The reflection, $R,$ on $G=G[2]$ is $g\longmapsto g^{-1},$ the three projections
from $G[3]=G^2$ to $G[2]$ are
\begin{equation*}
\Pi_{F}(a,b)=b,\ \Pi_C(a,b)=a,\ \Pi_S(a,b)=ab^{-1},
\label{GP.141}\end{equation*}
the three diagonals, in $G[3],$ which are diffeomorphic to $G,$ are 
\begin{equation*}
D_{1,2}=\{(g,g)\},\ D_{1,3}=\{(\Id,g)\},\ D_{2,3}=\{(g,\Id)\}
\label{GP.142}\end{equation*}
and the triple diagonal is the point $D=(\Id,\Id).$

The projections $\Pi_L=\Pi_R$ both map to a point so
$\nul((\Pi)_R)_*)=T_{\Id}G$ is a `bundle' over $G[1].$ Consider the
analogues of the operations carried out in the case of a fibre bundle above.
\begin{enumerate}
\item The null space of $(\Pi_F)_*$ at a point $(g,g)\in D_{1,2}$ is
$T_gG\times \{0\}.$ Under $\Pi_S,$ $(e^{tv}g,g)$ is mapped to $e^{tv}$
  so $W=(R_v,0)$ on $D_{1,2}.$
\item Thus the image $(\Pi_C)_*W$ is $\tilde v=R_v,$ the right-invariant
  vector field on $G$ evaluating to $v$ at $\Id.$ 
\end{enumerate}

Thus the simplicial construction, formally the same as that for a fibration,
yields the (right) Lie algebra structure on $E=T_{\Id}G.$
\end{exam}

\begin{definition}\label{GP.144} A \emph{simplicial compactification} of a
  Lie group, $G,$ is a generalized product $\oG[*],$ in the category of
  compact manifolds with corners, reducing to \eqref{GP.138} in the
  interior.
\end{definition}

\noindent Of course for a compact Lie group \eqref{GP.138} is a
simplicial compactification.

\begin{conjecture*}\label{GP.143} Any real reductive Lie group with connected
  centre has a simplicial compactification.
\end{conjecture*}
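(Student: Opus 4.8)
A possible route to the conjecture runs through the criterion of Lemma~\ref{GP.81}. The interior data are forced: $\inn\oG[k]$ must be $G^k/G\cong G^{k-1}$ equipped with the twisted $S_k$-action \eqref{GP.140}, the forgetful fibrations $\Pi_k$ and the diagonal embeddings of Example~\ref{GP.156}, and these already form a generalized product of boundaryless manifolds, with the dimension condition holding for $\mu=0,$ $\kappa=\dim G.$ So it suffices to produce, for each $k,$ a compact manifold with corners $\oG[k]$ with this interior and to verify exactly three things: the twisted permutations extend to diffeomorphisms of $\oG[k];$ the projection $\Pi_k$ extends to a b-fibration $\oG[k]\longrightarrow\oG[k-1];$ and the diagonal $D_{1,2}$ extends to a p-embedding $\oG[k-1]\longrightarrow\oG[k].$ The remaining structure maps — notably the `difference' b-fibrations such as $\Pi_S(a,b)=ab^{-1}$ and the off-diagonal embeddings $D_{i,j},$ $R_{i,j}$ — are then supplied by the functoriality argument in the proof of that lemma, since compositions of b-fibrations are b-fibrations and compositions of p-embeddings are p-embeddings.

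Since $\oG[1]$ is a point, the first substantive object is $\oG[2],$ a manifold-with-corners compactification of $G$ on which inversion extends to a diffeomorphism. I would take its boundary hypersurfaces to be indexed by the proper parabolic subgroups of $G$ (equivalently by the $(G\times G)$-orbits on a group compactification), meeting in normal crossings. Explicitly, the Cartan decomposition $G=K(\exp\overline{\mathfrak a^+})K$ reduces this to compactifying the closed Weyl chamber $\overline{\mathfrak a^+}\subset\mathfrak a$ by the standard `cube' $[0,\infty]^{r}$ attached to the simple restricted roots ($r$ the real rank), and checking that the $K\times K$- and restricted-Weyl-group identifications glue to a smooth manifold with corners; equivalently one may use the manifold-with-corners realization of the maximal Satake compactification of the symmetric space of $G\times G$ attached to $G=(G\times G)/\Delta G,$ or, for split $G,$ an iterated real blow-up of the real locus of the De~Concini--Procesi wonderful compactification. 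For $G=\RR^*_+$ this produces $[0,\infty]$ and for the translation group $V$ the radial compactification, recovering the two motivating cases; inversion extends because it is covered by the symmetry of these constructions.

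For $k\ge3$ I would construct $\oG[k]$ by induction as an iterated boundary blow-up of a fibre product of copies of $\oG[2].$ Starting from $\oG[2]^{k-1}$ — a compactification of $G^{k-1}$ — the difference maps $(g_1,\dots,g_{k-1})\mapsto g_ig_j^{-1}$ fail to extend smoothly across the corner, precisely as in the construction of the double and triple spaces of the b-, zero- and scattering calculi, and one resolves them by successively blowing up the boundary faces on which two `rays to infinity' become comparable — the faces cut out by pairs of opposite parabolics — in the order prescribed by inclusion of parabolics. Because this recipe is symmetric in the $k-1$ factors, the twisted $S_k$-action lifts automatically, and $\Pi_k$ and $D_{1,2}$ lift because a boundary blow-up is natural under b-fibrations that restrict to b-fibrations of the blown-up face and under p-embeddings meeting it cleanly. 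One then checks, in the product coordinates generated by the blow-ups, that $\Pi_k$ has become a b-fibration and $D_{1,2}$ a p-embedding: this is the converse of the intersection analysis carried out in Theorem~\ref{GP.217}.

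The hard part is this last verification for a general reductive $G.$ For $G$ abelian or of real rank one the blow-up combinatorics and the ensuing local product structures are classical — they are exactly the b-, zero- and scattering calculi — but in higher rank the difference $g_ig_j^{-1}$ entangles the non-commutativity of $G$ with the chamber geometry, and it is not obvious that after blowing up the parabolic faces this map acquires the clean local product form demanded of a b-fibration, nor that the partial diagonals remain p-embedded (rather than merely embedded) where they cross the new front faces. The plan would be to organise the blow-ups by the combinatorics of the spherical building of $G$ and to use horospherical coordinates near each parabolic face to linearise $g_ig_j^{-1}$ to leading order; the reductive and connected-centre hypotheses are presumably imposed precisely to secure a clean Cartan decomposition and to ensure that $G^k/G$ is a manifold with no spurious components coming from $\pi_0$ of the centre.
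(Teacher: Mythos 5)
This statement is a conjecture: the paper itself offers no proof, only the remark that a proof might come from the compactification results of Albin--Dimakis--Melrose--Vogan \cite{1910.02811} and Behr's thesis \cite{Behr-thesis}, together with the two worked abelian cases ($\RR^*_+$ via the b-stretched product and a vector space via the semiclassical construction). Your proposal is therefore best judged as a programme, and as such it is sensibly aligned with what the paper envisages: the reduction via Lemma~\ref{GP.81} to extending the symmetries, the projections $\Pi_k$ and the diagonal $D_{1,2}$ is exactly the right use of the paper's machinery, the dimension count $\mu=0,$ $\kappa=\dim G$ is correct, and taking $\oG[2]$ to be a corner compactification of $G$ of Satake/wonderful type is precisely the direction the cited references point in. But it is not a proof, and you say so yourself: the entire content of the conjecture is the ``hard part'' you defer, namely that after your proposed blow-ups the difference maps become b-fibrations (with the clean local product form of \eqref{SCL.51}) and the partial diagonals become p-embedded, uniformly in the rank. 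Until that is established for some explicit resolution, nothing has been proved.

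Two more concrete cautions. First, your starting space $\oG[2]^{k-1}$ is already a nontrivial choice: in the paper's own abelian model the triple and higher spaces are \emph{not} built from the product of copies of the two-space compactification but from the radial compactification of $V^{k-1}$ with the diagonal spheres blown up (Proposition~\ref{GP.264}); the product $\overline{V}^{\,k-1}$ is a different manifold with corners, related to $\overline{V^{k-1}}$ only through further blow-ups, so your recipe would at best produce a different compactification and the verification that the structure maps extend cannot simply be borrowed from the rank-one or abelian calculi. Second, your description of the boundary hypersurfaces of $\oG[2]$ as ``indexed by the proper parabolic subgroups'' needs sharpening (in the maximal Satake or wonderful picture the hypersurfaces correspond to the simple restricted roots, i.e.\ conjugacy classes of maximal proper parabolics), and the combinatorics of which boundary faces must be blown up, and in what order, so that $g_ig_j^{-1}$ acquires b-fibration form is exactly where the building geometry enters and where no argument is given. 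So: plausible strategy, consistent with the paper's stated expectations, but with the decisive step --- the higher-rank normal-form analysis at the corners --- still open, which is why the statement remains a conjecture in the paper.
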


It may be that this can be seen using results such as in \cite{1910.02811}
and particularly the thesis of Malte Behr \cite{Behr-thesis}. Two (commutative)
examples of simplicial compactification arise below. For a vector space, as
an abelian Lie group, a simplicial compactification arises from the
construction of the semiclassical stretched product below, through
restriction to the (one) boundary hypersurface as in
Theorem~\ref{GP.136}. For the multiplicative group of the positive real
numbers a simplicial compactification is produced in \S\ref{BSP}, it is not
the same as the compactification of the additive group on $\bbR,$ even
though the two groups are diffeomorphic.

There is frequently a larger group of diffeomorphisms preserving the
structure maps of a generalized product beyond the permutation action. If a
group $K$ acts in the sense that there is a homomorphism $K\longrightarrow
\operatorname{Dif}(M[k])$ into the diffeomorphism group for all $k$ and
these are intertwined by the structural maps then one can form bundles with
fibres modelled on the generalized product and cocycle in $K.$

One special case of the notion of generalized product is particularly
common, that of a `stretched product'. For this we require that the
interiors be the fibre products of a fibration. In typical cases these
interiors are actually the interiors of a fibre bundle of compact manifolds
with corners, so that the stretched products typically arise by
`resolution' but we do not demand this explicitly.

\begin{definition}\label{GP.135} A \emph{stretched product} is a
  generalized product $M[k]$ in the category of compact manifolds with
  corners such that there is a fibration $\phi:\inn(M[1])\longrightarrow
  Y'$ and the interiors of the $M[k]$ are identified with their structural
  maps with the fibre products of $\phi.$
\end{definition}

It turns out that there may be different generalized products compactifying
a given open generalized product, and in particular a particular open fibre
product. So identifiers are used to distinguish them and we write $M[k;T]$
for the generalized or stretched product with identifier `$T$'. As already
suggested, in typical exampes of a stretched product the interior
fibration $\phi$ extends to a fibration of compact manifolds with corners
$\phi:M[1;T]\longrightarrow Y$ and the notation is intended to suggests
that $M[k;T]$ is defined through a resolution process, such as iterated
blow-up, from the fibre products of the $M=M[1;T].$ In the examples of
stretched products below the $M[k;T]$ are constructed by blow-up of
boundary p-submanifolds of these $M^{[k]}_{\phi}.$ This leads directly to
symmetry and then the maps $\Pi_k^T$ and $D_{1,2}^T$ in Lemma~\ref{GP.81}
are shown to exist through commutation and lifting properties of blow-up.

See the work of Kottke and Rochon, \cite{MR4624070}, for related
ideas of the construction of fibre products.

Proposition~\ref{GP.136} applies to stretched products but (as seen in the
adiabatic example below) the resulting generalized product for a boundary
hypersurface is not a stretched product. Whilst in principle `simpler' than
the stretched product from which they come these boundary generalized
products typically have a factor related to the group example above,
i.e.\ the associated Lie algebroid (discussed in \S\ref{LA} below) has a
non-trivial null space to its anchor map.

\section{Manifolds with corners}\label{MWC}
We  briefly recall the concepts involved in  Definition~\ref{SCL.43} and
used freely in the remainder of this paper.

First, to give a direct (if not particularly natural) definition of
manifolds with corners, consider a compact manifold, $\hat M,$ without
boundary and a \emph{hypersurface involution}. This is a smooth involution,
$R,$ of the manifold which has fixed points and at each of them the
differential of $R$ has exactly one negative eigenvalue. It follows that
\begin{equation*}
\hat H_R=\{m\in\hat M;Rm=m\}
\label{SCL.44}\end{equation*}
is a closed embedded separating hypersurface (possibly with several
components). In particular there exists $\xi\in\CI(\hat M)$ such that
$R^*\xi=-\xi$ and $\hat H=\{\xi=0\}$ with $d\xi\not=0$ at $\hat H.$

Suppose $\hat M$ is equipped with $N$ such commuting involutions, $R_j,$
$j=1,\dots,N$ and corresponding $\xi_j$ such that for any subset
$I\subset\{1,\dots,N\}$ and point $m$ at which $\xi_j(m)=0$ for all $j\in I$
the $d\xi_j(m)$ are linearly independent. Then the space
\begin{equation}
M=\{m\in\hat M;\xi_j\ge0\ \forall\ j\},\ \CI(M)=\CI(\hat M)\big|_{M}
\label{SCL.45}\end{equation}
is a compact manifold with corners. Provided it is required as part of the
definition that the boundary hypersurfaces of a manifold with corners be
embedded (as we do), this is equivalent to more standard definitions.

Taking this as the definition, the boundary hypersurfaces of $M$ are the
components of the fixed hypersurfaces $\hat H_i\cap M;$ the set of them
denoted $\cM_1(M).$ Each $H_j\in\cM_1(M)$ has a defining function $0\le
x_j\in\CI(M)$ such that $\CI(M)x_j$ is the ideal of functions vanishing on
$H_j.$ These $x_j$ are boundary defining functions of $M.$ Note that, since
the $\hat H_i\cap M$ may have several components, they are in general
disjoint unions of boundary hypersurfaces and the $\xi_i$ restricted to $M$
become `collective' boundary defining functions, i.e.\ products of the
corresponding $x_j.$ The dimension, $n,$ of $M$ is the dimension of $\hat
M.$ At any point of $M$ there are `adapted' coordinates consisting of
$k\ge0$ of the $x_j$ which vanish there and $n-k$ locally smooth functions
$y_l$ together giving a diffeomorphism of an open neighbourhood of the
point to $[0,\epsilon )^k\times (-\epsilon ,\epsilon )^{n-k}$ for some
  $\epsilon >0.$ We call these simply `coordinates' on $M.$

The boundary faces of codimension $k,$ forming $\cM_k(M),$ are the
components of non-trivial intersections of $k$ boundary hypersurfaces.

A map $F:M_1\longrightarrow M_2$ between compact manifolds with corners is
smooth if $F^*:\CI(M_2)\longrightarrow \CI(M_1).$ Writing complete sets of
boundary defining functions $x_j'$ for $M_2$ and $x_k$ for $M_1,$ a smooth
map is a \emph{b-map} if for each $j$ one of three possibilities holds
\begin{equation}
F^*x_j'\equiv0,\ F^*x_j'=\alpha _j\prod\limits_k x_k^{a(j,k)},\ \alpha
_j>0\Mor F^*x_j'>0.
\label{SCL.46}\end{equation}
Here the $a(j,k)$ are necessarily non-negative integers but may vanish and
the final case is the special form of the second when all the powers
vanish. If the first case does not occur, so the image of $F$ does
\emph{not} lie in a boundary hypersurface, $F$ is called an interior
b-map. Clearly the composite of b-maps is a b-map and the compact manifolds
with corners form a category with the b-maps as the morphisms (or with
interior b-maps as morphisms).

The smooth vector fields on $M$ which are tangent to all boundaries form a
Lie algebra $\bV(M)$ which is a module over $\CI(M)$ and locally has a
basis, i.e.\ $\bV(M)=\CI(M;\bT M)$ for a smooth vector bundle over $M$ with
a natural bundle map $\bT M\longrightarrow TM$ of corank $k$ at a boundary
point of codimension $k.$ In local coordinates $\bT M$ has the local basis
$x_j\pa_{x_j},$ $\pa_{y_l}.$ Thus $\bV(M)$ is a \emph{Lie algebroid}. An
interior b-map has a b-differential $F_*:\bT_mM_1\longrightarrow
\bT_{F(m)}M_2$ consistent with the usual differential. For a boundary b-map
one must replace $\bT M_2$ by $\bT B$ where $B$ is the smallest boundary
face containing the image of $F.$ 

To complete the explanation of the Definition~\ref{SCL.43} we need to
define p-embeddings and b-fibrations which are the analogues of embeddings
and fibrations in this category.

A \emph{p-submanifold} of $M$ is a closed (and generally assumed connected
here) subset everywhere locally of the form $x_i=0,$ $i=1,\dots,d',$
$y_l=0,\ l=1,\dots,n-p'$ in terms of some coordinates. Thus it has a
\emph{p}roduct decomposition consistent with that of the manifold. It is
always a manifold with corners. A \emph{p-embedding} is a diffeomorphism of
a manifold with corners onto a p-submanifold. An interior p-submanifold is
one with no component contained in a boundary hypersurface, equivalently it
is locally defined by the vanishing of interior coordinates.

A \emph{b-fibration} $F:M_1\longrightarrow M_2$ between compact manifolds
with corners is a surjective b-map with the property that near each point
$m\in M_1$ there are local coordinates $x,y$ and also local coordinates
$x',y'$ near $F(p)\in M_2$ such that the $F^*y'_{l'}$ are amongst the $y_l$
and
\begin{equation}
F^*x'_{j'}=\prod_{j\in J(j')} x_j^{\alpha (j',j)}\text{ where }J(j'_1)\cap
J(j'_2)=\emptyset \Mif j'_1\not=j'_2.
\label{SCL.51}\end{equation}
Thus each local defining function for the boundary of $M_1$ near $p$ can
occur in at most one of the product decompositions. The only case which
arises below is of a \emph{simple b-fibration} where the $\alpha (j,j')$
are either $0$ or $1;$ we therefore subsume this in the notion of a
b-fibration. As for a fibration (which corresponds to the case that at
most one $\alpha (j',j)=1$ for each $j')$ surjectivity is automatic if
$M_2$ is connected.

\begin{proposition}\label{GP.128} The inverse image of an interior
  p-submanifold under a b-fibration is an interior p-submanifold.
\end{proposition}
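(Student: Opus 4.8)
The plan is to argue entirely in local coordinates, using the normal form \eqref{SCL.51} for a b-fibration and the local description of an interior p-submanifold as a vanishing set of interior coordinates. Let $F:M_1\longrightarrow M_2$ be a b-fibration and $P\subset M_2$ an interior p-submanifold; set $P_1 = F^{-1}(P)$. First I would fix a point $p\in P_1$ and choose coordinates $x,y$ near $p$ and $x',y'$ near $F(p)$ as in the definition of a b-fibration, so that each $F^*y'_{l'}$ is one of the $y_l$ and each $F^*x'_{j'} = \prod_{j\in J(j')} x_j^{\alpha(j',j)}$ with the index sets $J(j')$ pairwise disjoint. Since $P$ is an \emph{interior} p-submanifold, near $F(p)$ it is cut out by the vanishing of some subset of the \emph{interior} coordinates $y'_{l'}$, say $y'_{l'}=0$ for $l'\in A$ (no boundary defining functions $x'_{j'}$ appear, precisely because no component of $P$ lies in a boundary hypersurface). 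Pulling back, $P_1$ is locally $\{F^*y'_{l'}=0 : l'\in A\}$, and each $F^*y'_{l'}$ is one of the coordinates $y_l$; because $F^*$ restricted to the $y'$-coordinates is injective on the relevant ones (distinct $y'_{l'}$ pull back to distinct $y_l$ in the adapted coordinate system — this follows from $F$ being a b-map in normal form, as the $y'$ are part of a coordinate system downstairs and their pullbacks are among the $y$'s upstairs), $P_1$ is locally the vanishing of a subset of the interior coordinates $y_l$ on $M_1$. That is exactly the statement that $P_1$ is an interior p-submanifold near $p$.

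Two points need care. First, I must check that the coordinates $F^*y'_{l'}$, $l'\in A$, really are \emph{distinct} elements of $\{y_l\}$ and not repeated; if two of them coincided, the defining equations would be redundant but the conclusion would still hold, so strictly this is harmless, but it is cleaner to note that a b-fibration restricted to its ``horizontal'' directions is submersion-like, forcing the map $y'\mapsto$ (subset of $y$) to be injective in the chosen normal form. Second, and more substantively, I should verify that the locally-defined sets patch to a genuine p-submanifold: $P_1$ is closed (preimage of the closed set $P$ under the continuous, indeed proper since everything is compact, map $F$), and the local product structure is coordinate-independent in the sense required, so the local descriptions glue. Connectedness is \emph{not} automatic — $F^{-1}(P)$ of a connected p-submanifold need not be connected — but the paper's convention (stated in \S\ref{MWC}) only ``generally'' assumes connectedness for p-submanifolds, and in the generalized-product setting the relevant b-fibrations have connected fibres, so one gets connectedness of $P_1$ when $P$ is connected; I would either invoke the connected-fibre hypothesis or simply remark that one works componentwise.

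The main obstacle is purely bookkeeping: making sure that when one writes the normal form \eqref{SCL.51} simultaneously adapted to the point $p$ and to the submanifold $P$ downstairs, the defining functions of $P$ pull back cleanly to interior coordinates upstairs without accidentally involving boundary defining functions $x_j$. The key mechanism that prevents this is exactly the disjointness condition $J(j'_1)\cap J(j'_2)=\emptyset$ together with the fact that $P$ is \emph{interior}: since no $x'_{j'}$ occurs among the defining functions of $P$, no $x_j$ can be forced to vanish on $P_1$, so the product structure of $M_1$ transverse to $P_1$ is carried entirely by interior coordinates. Once this is said, the rest is the observation that ``pullback of a subset of interior coordinates under a coordinate-compatible map is again a subset of interior coordinates,'' which is immediate from the normal form.
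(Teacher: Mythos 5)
Your overall strategy (work in the local normal form \eqref{SCL.51} and observe that the preimage is cut out by interior coordinates) is exactly the intended one -- the paper's own proof is the one-line remark that it ``follows from the local structure of manifold and the map'' -- but as written your argument has a genuine gap at its key step. The definition of a b-fibration only provides \emph{some} pair of charts, $(x,y)$ near $p$ and $(x',y')$ near $F(p)$, in which \eqref{SCL.51} holds, while the definition of an interior p-submanifold only provides \emph{some} adapted chart near $F(p)$ in which $P$ is the zero set of interior coordinates. You simply assume these two charts downstairs coincide (``near $F(p)$ it is cut out by the vanishing of some subset of the interior coordinates $y'_{l'}$''), and your stated reason -- that no $x'_{j'}$ can appear among defining functions of $P$ because $P$ is interior -- is false as a statement about an arbitrary chart: for example $P=\{y'=x'\}$ is a perfectly good interior p-submanifold near a point of $\{x'=0\}$, yet its defining function in the normal-form chart involves $x'$. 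What is true is that one \emph{can} choose adapted coordinates in which $P$ is linear, and the real content of the proposition is reconciling that choice with the normal-form chart for $F$; this is precisely the step your proposal asserts rather than proves.

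To close the gap you should do one of two equivalent things. Either prove the lemma that the normal form \eqref{SCL.51} can be achieved relative to an \emph{arbitrarily prescribed} adapted chart on the base (adjusting only the chart upstairs), or, more directly, take defining functions $g_1,\dots,g_r$ of $P$ near $F(p)$ (so their differentials are independent modulo the span of the $dx'_{j'}$ with $x'_{j'}(F(p))=0$) and show that the pullbacks $F^*g_i$ have differentials independent modulo the span of the $dx_j$ with $x_j(p)=0$, so that they may be completed to interior coordinates of an adapted chart at $p$ in which $F^{-1}(P)=\{F^*g_i=0\}$. The computation uses exactly the two features of \eqref{SCL.51} you mention but never actually deploy: if $x'_{j'}(F(p))=0$ then $d(F^*x'_{j'})|_p$ already lies in the span of the vanishing $dx_j$, so those terms drop out; while for the $x'_{j'}$ not vanishing at $F(p)$ the differentials $d(F^*x'_{j'})|_p$ are positive combinations of $dx_j$ over the \emph{disjoint} index sets $J(j')$, hence independent of each other and of the $dy_l=d(F^*y'_{l'})$. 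This is where the disjointness condition and the positivity of the exponents genuinely enter; without it the pulled-back defining functions could have dependent differentials and the conclusion would fail. Your remarks on closedness and on connectedness (work componentwise, or invoke connected fibres) are fine and are indeed side issues.
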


\begin{proof} This follows from the local structure of manifold and the map.
\end{proof}

\noindent The inverse image of a boundary
  p-submanifold under a b-fibration is a union of boundary p-submanifolds.

\section{Blow up}\label{BU}

We also recall the notion of blow-up, which is well-defined for any
p-submanifold $X\subset M$ (recall that these are by definition closed) of
a manifold with corners. The construction of $M$ blown up along the centre
$X$ is closely related to the existence of a collar neighbourhood, or
normal fibration, for $X$ in $M.$ The result is a manifold with corners
(functorially associated to the pair $(M,X))$ with associated blow-down
(interior b-)map
\begin{equation}
\beta :[M;X]\longrightarrow M.
\label{SCL.52}\end{equation}
We will usually consider the case where the centre of blow-up, $X,$ is a
boundary p-submanifold. As a set 
\begin{equation}
[M;X]=S^+X\sqcup (M\setminus X),
\label{SCL.53}\end{equation}
the blown-up space, is the disjoint union of the complement of $X$ in $M$
and the inward-pointing part of the spherical normal bundle to $X.$ Thus if
$p\in X$ and $x_j$ are a maximal set of $k$ local boundary defining
functions for $M$ near $p$ which vanish identically on $X$ (so $X$ is
contained in a boundary face of codimension $k)$ then
\begin{equation*}
S^+_pX=N^+_pX/T_pX,\ T^+_pX=\{v\in T_pM;vx_j(p)\ge0\}.
\label{SCL.54}\end{equation*}
These form a bundle of the $k$-positive parts of spheres,

\begin{equation*}
\bbS^{n,k}_+=\bbS^n\cap([0,\infty)^{k+1}\times \bbR^{n-k}),
\label{GP.348}\end{equation*}
over $X,$ where $n$ is the codimension of $X$ in $M.$ The well-defined
\ci\ structure on $[M;X]$ makes this bundle into a boundary hypersurface
(the `front face' of the blow-up, also the lift or proper transform of the
centre). The \ci\ functions near a point of the front face are those which
are smooth in polar coordinates in the normal variables to $X$ and in
tangential variables. The blow-down map is the identity away from the front
face and a fibration of the front face onto the centre.

\begin{lemma}\label{GP.278} A diffeomorphism, $F,$ of $M$ lifts to a
  diffeomorphism of $[M;X]$ to $[M;F(X)];$ so $F$ lifts to a diffeomorphism of
  $[M;X]$ if and only if it restricts to a diffeomorphism of $X.$
\end{lemma}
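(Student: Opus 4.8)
The plan is to build the lift by hand from the set-level description $[M;X]=S^+X\sqcup(M\setminus X)$ and then to read the stated equivalence off from the behaviour of the front face. First I would observe that $F$, being a diffeomorphism of manifolds with corners, permutes $\cM_1(M)$ and pulls each boundary defining function back to a positive multiple of a single boundary defining function; it therefore carries a local normal form $x_i=0,\ y_l=0$ cutting out $X$ to one cutting out $F(X)$. Hence $F(X)$ is again a connected p-submanifold of the same codimension, contained in a boundary face of the same codimension, and the (b-)differential $F_*$ carries the inward-pointing cone of $X$ at $p$ onto that of $F(X)$ at $F(p)$. In particular $F$ is an isomorphism of the pair $(M,X)$ with the pair $(M,F(X))$.

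Given this, I would define $\tF\colon[M;X]\to[M;F(X)]$ to be $F$ on $M\setminus X$ (with values in $M\setminus F(X)$) and on the front face $\ff=S^+X$ to be the fibrewise isomorphism $S^+X\to S^+F(X)$ induced by $F_*$, covering $F|_X$; by construction $\beta'\circ\tF=F\circ\beta$, where $\beta,\beta'$ denote the two blow-down maps. The one substantive point is smoothness of $\tF$ across $\ff$: in adapted coordinates near $p\in X$ and near $F(p)$ the map $F$ has an invertible Jacobian, and one has to check that it carries the chart on $[M;X]$ given by polar coordinates in the normal directions to $X$ (together with tangential coordinates) to the corresponding chart on $[M;F(X)]$, smoothly and with smooth inverse. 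This is exactly the local content of the functoriality of the blow-up for the pair $(M,X)$, and it is the step I expect to need the only real work, though it is routine. Finally, running the same construction for $F^{-1}$ — a diffeomorphism of $M$ with $F^{-1}(F(X))=X$ — yields a two-sided inverse, so $\tF$ is the desired diffeomorphism of $[M;X]$ onto $[M;F(X)]$.

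For the equivalence: if $F$ restricts to a diffeomorphism of $X$ then $F(X)=X$, so $[M;F(X)]=[M;X]$ and the map just built is a self-diffeomorphism of $[M;X]$ lifting $F$. Conversely, suppose $\tG$ is a diffeomorphism of $[M;X]$ with $\beta\circ\tG=F\circ\beta$. Since $\beta^{-1}(X)=S^+X=\ff$ as sets, this identity forces $\tG(\ff)=\beta^{-1}(F(X))$. Now $\ff$ is a connected boundary hypersurface of $[M;X]$ (the centre $X$ and the fibre of the front face both being connected), hence so is $\tG(\ff)$; on the other hand $\beta$ restricts to a diffeomorphism of $[M;X]\setminus\ff$ onto $M\setminus X$, so $\beta^{-1}(F(X)\setminus X)$ is a submanifold of $[M;X]$ of codimension $\codim_M X$ contained in the boundary hypersurface $\tG(\ff)$. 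This is impossible once $\codim_M X\ge 2$ unless $F(X)\setminus X$ is empty, and it is equally impossible when $X$ (hence $F(X)$) is an interior p-submanifold, since then $\beta^{-1}(F(X)\setminus X)$ lies in the interior of $[M;X]$; excluding the trivial case in which $X$ is a boundary hypersurface, we conclude $F(X)=X$. Then $\tG(\ff)=\ff$, and $F$ restricts to a smooth bijection of $X$ with smooth inverse $F^{-1}|_X$, that is, to a diffeomorphism of $X$.
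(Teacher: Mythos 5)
The paper states this lemma without proof, treating it as an instance of the functoriality of blow-up, and your construction of the lift --- $F$ away from the centre, the map induced by $F_*$ on inward-pointing spherical normal fibres over the front face, smoothness checked in adapted polar coordinates, inverse obtained from $F^{-1}$ --- is exactly that standard argument, as is the easy (``if'') direction. Your converse is the right idea but three points need repair. First, as literally written, ``a submanifold of codimension $\ge 2$ contained in a boundary hypersurface'' is not a contradiction (such containments happen all the time); what is contradictory is the set \emph{equality} $\tG(\ff)=\beta^{-1}(F(X))$ you established just before: near a point of $\beta^{-1}(F(X)\setminus X)$ the boundary hypersurface $\tG(\ff)$ would have to coincide with a set of dimension at most $\dim M-2,$ which is impossible since a boundary hypersurface is $(\dim M-1)$-dimensional at every point. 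Second, when $X$ is an interior p-submanifold that meets $\pa M,$ the set $\beta^{-1}(F(X)\setminus X)$ need not lie in the interior of $[M;X];$ the correct (and sufficient) statement is that near any of its points $\beta^{-1}(F(X))$ contains interior points of $[M;X],$ which again cannot happen for a subset of $\pa[M;X].$ (Relatedly, your parenthetical that $\ff$ is connected fails for an interior hypersurface centre, where the normal fibre is $\bbS^0;$ but connectedness is never needed --- only that $\tG(\ff)$ is a union of boundary hypersurfaces.) Third, these arguments yield only $F(X)\setminus X=\emptyset,$ i.e.\ $F(X)\subseteq X,$ not $F(X)=X$: to finish, apply the same reasoning to $F^{-1},$ which is lifted by $\tG^{-1},$ to get $X\subseteq F(X)$ (alternatively, show $F(X)$ is open and closed in the connected manifold $X$). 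With these one-line patches the proof is complete, and the excluded case of a boundary hypersurface centre is indeed harmless since there $[M;X]=M$ canonically.
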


A basic result on blow-up is:

\begin{lemma}\label{GP.83} If $F_i\subset M$ are two p-submanifolds which
  intersect transversally (including being disjoint) or are nested in the
  sense that $F_1\subset F_2$ then
\begin{equation}
[[M;F_1];F_2]=[[M;F_2];F_1]
\label{GP.86}\end{equation}
are naturally diffeomorphic.
\end{lemma}

\begin{proof} Assume first that $F_1\subset F_2.$ Away from the preimage of
  $F_1$ both blow-ups reduce to the blow-up of $F_2\setminus F_1.$ Near
  $F_1$ both are bundles over $F_1$ with the blow-ups being in the
  fibres. Thus we are reduced to the case that $F_1$ is the origin in
  $(y,z)\in\bbR^p\times\bbR^q$ and $F_2=\{y=0\}.$ Then both blow-ups
  represent completions of $y\not=0,$ to a manifold with corners.

Blowing up $F_1$ first introduces $R=(|y|^2+|z|^2)^{\ha},$ $y/R,$ $z/R$ as
smooth functions generating the \ci\ structure. Then blowing $y/R=0$ the
functions $r=|y|/R$ and $y/|y|$ become smooth in $|y|<\epsilon R.$ Note
however that in $|y|>\ha\epsilon R,$ $R/|y|$ is smooth so in fact $y/|y|$
and $r$ are globally smooth and induce the \ci\ structure on the blow-up
with $r$ and $R$ as boundary defining functions.

Proceeding in the opposite order the gives first the polar coordinates
$s=|y|$ and $y/|y|,$ then blowing up $s=0=|z|$ introduces
$R=(s^2+|z|^2)^{\ha},$ $r=s/R$ and $z/R$ as smooth functions. This way the
manifold looks like the product of the sphere in $y$ and the half-space in
$(s,z)$ blown up at the origin, i.e.\ the product of a sphere, a
half-sphere and a half-line in $R.$ 

It follows that the functions on the complement of the larger submanifold
which extend to define the \ci\ structures on the two resolutions are the
same, so the two blow-ups are naturally diffeomorphic.
\end{proof}
This natural diffeomorphism shows that the blow-up of a sphere along a
coordinate subsphere is the product of a sphere and a half-sphere.

\begin{definition}\label{GP.292}  A finite collection of
p-submanifolds is \emph{p-clean} (generalizing the notion due to Bott)
if near each point of the union there are local coordinates $x_j,$ $y_l$ in
which each submanifold in the set, containing that point, is defined by a
subset of the $x_j$ and some linear functions of the $y_l.$
\end{definition}

For any two manifolds in such a p-clean collection there is, near each
point of intersection, a joint product decomposition with the manifold.

\begin{proposition}\label{GP.85} If $F_i\subset M,$ $i=1,2,3$ are a p-clean
triple satisfying 
\begin{equation}
F_3\subset F_1,\ F_2\setminus F_1\not=\emptyset,\ F_3\text{
  transversal to }F_2\cap F_1\text{ within }F_1
\label{GP.88}\end{equation}
then $F_2$ and $F_3$ lift to be transversal in $[M;F_1].$ 
\end{proposition}

\begin{proof} After the blow-up of $F_1,$ the lift of $F_2$ is transversal
  to the front face. So it suffices to show that the intersection of $F_2$
  with the front face is transversal to the lift of $F_1.$ The latter is
  the union of fibres of the front face, so this reduces to the
  transversality of the quotients by the blow up within $F_1$ which holds
  by hypothesis.
\end{proof}

\begin{proposition}\label{GP.117} If $\phi:M\longrightarrow N$ is a
  (simple) b-fibration of compact manifolds with corners and $C\subset M$ is a
  boundary p-submanfold not contained in a boundary face of codimension
  greater than one and such that $\phi:C\longrightarrow H$ is a b-fibration
  with image a boundary hypersurface of $N$ then the composite 
\begin{equation}
\beta\circ\phi:[M;C]\longrightarrow N\text{ is a b-fibration.}
\label{GP.118}\end{equation}
\end{proposition}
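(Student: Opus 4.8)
The plan is to verify the b-fibration condition locally, working at a point $p'$ of the front face $\ff$ of $[M;C]$ (away from $\ff$ the map $\beta\circ\phi$ agrees with $\phi$, which is already a b-fibration, so nothing is to be checked there). Let $q=\beta(p')\in C$ and let $h=\phi(q)\in H\subset N$. Since $C$ is a boundary p-submanifold not contained in a codimension-two corner, near $q$ we may choose coordinates $x_1,\dots,x_k,y_1,\dots$ on $M$ in which $C=\{x_1=0,\ y_1=\dots=y_c=0\}$, where $x_1$ is one of the boundary defining functions of $M$ and the $y_j$ are interior coordinates. Because $\phi:C\longrightarrow H$ is a b-fibration onto the boundary hypersurface $H$, and because $\phi$ is a (simple) b-fibration of the ambient manifolds, I would first arrange the target coordinates $x',y'$ near $h$ so that $x'_1$ defines $H$, the remaining $x'$-variables define the other boundary hypersurfaces of $N$ meeting $h$, and the $\phi^*y'$ are among the $y$'s. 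The compatibility of the two b-fibration structures (ambient $\phi$ and its restriction to $C$) forces $\phi^*x'_1$ to be, up to a positive factor, a product of boundary defining functions of $M$ exactly one of which is $x_1$ (the one cutting out the part of $\partial C$ over $H$); the other ambient boundary defining functions entering $\phi^*x'_1$, if any, are transversal to $C$.

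Next I would pass to polar (blow-up) coordinates near $p'$. Write $\rho$ for a defining function of the front face $\ff$; in the normal directions to $C$ one of $x_1,y_1,\dots,y_c$ dominates, and the blown-up coordinates are $\rho$ together with the ratios $x_1/\rho$, $y_j/\rho$ (one of which is $\equiv 1$) and the untouched variables. The key computation is to substitute $x_1=\rho\cdot(x_1/\rho)$ into the expression for $\phi^*x'_1$ obtained above. Since $x_1$ appeared to the first power (simplicity of the b-fibration) and multiplicatively separated from the other boundary defining functions in $\phi^*x'_1$, this substitution shows $(\beta\circ\phi)^*x'_1 = \rho\cdot\alpha\cdot\prod x_j^{a_j}$ with the $x_j$ ($j\ge2$) the remaining ambient boundary defining functions and $\alpha>0$ — i.e.\ $\rho$ enters exactly one of the product decompositions, and no other new boundary hypersurface is created, so the disjointness condition \eqref{SCL.51} for the $J(j')$ is preserved. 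For the other target defining functions $x'_{j'}$, $j'\ge2$, their pullbacks under $\phi$ did not involve $x_1$ (again by the product structure and the fact that $\phi(C)=H$ lies only in the single hypersurface $x'_1=0$), hence are unaffected by the blow-up and retain the b-fibration form; similarly the $\phi^*y'$ are still among the coordinates after passing to the front face. Together these verify \eqref{SCL.51} for $\beta\circ\phi$ at $p'$. Surjectivity is automatic since $N$ is connected.

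The main obstacle is the bookkeeping at the front face: one must be sure that blowing up $C$ does not cause a boundary defining function of $M$ to appear in two different product decompositions $J(j'_1),J(j'_2)$ of the target — this is exactly where the hypotheses that $C$ is not contained in a codimension-$\ge2$ face, that $\phi$ is \emph{simple}, and that $\phi|_C$ lands in a single boundary hypersurface $H$ are used. Concretely, the delicate point is to show that the single ambient boundary defining function $x_1$ cut out by $C$ feeds into only the one target function $x'_1$ (the defining function of $H=\phi(C)$), so that after replacing $x_1$ by $\rho\cdot(x_1/\rho)$ the new front-face variable $\rho$ also feeds into only that one target function. I would extract this from the local normal form of a b-fibration together with Proposition~\ref{GP.128}/its boundary analogue applied to $C$ as a boundary p-submanifold, and it is essentially the only step that is not a direct local computation.
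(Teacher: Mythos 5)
Your argument is essentially the paper's own proof: both reduce to a local computation at a point of the front face, in coordinates adapted simultaneously to $C$ and to the normal form of the simple b-fibration (this is where the hypothesis that $\phi\colon C\to H$ is a b-fibration is used), note that the defining function of $H$ pulls back to a monomial containing the unique boundary defining function $x_1$ that vanishes identically on $C$, and check that after blow-up this monomial acquires the front-face defining function while the disjointness in \eqref{SCL.51} and the interior coordinates are unaffected. The only slip is cosmetic: in the charts where one of the normal interior variables dominates, the lifted monomial is $\rho\cdot(x_1/\rho)\cdot\alpha\prod_{j\ge2}x_j^{a_j}$, so the defining function $x_1/\rho$ of the proper transform of the hypersurface containing $C$ (the paper's $r$) must be retained, but this extra factor does not disturb the verification of \eqref{SCL.51}.
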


\begin{proof} The composite \eqref{GP.118} is certainly a b-map. Suppose
  $C\subset H'$ where $H'$ is the hypersurface containing $C$ -- by
  assumption it is then the minimal boundary face doing so, i.e.\ interior
  points of $C$ have codimension $1$ in $M.$ Consider the preimage in $C$
  of a general point $p\in H$ of codimension $q+1,$ $q\ge0,$ i.e.\ having
  codimension $q$ as a point in $H.$ Let $m\in C$ be in the preimage under
  $\phi$ of $p.$ Near $m$ there are local coordinates in $N$ near $p,$ $x,$
  $y$ and  $\xi$ $y,z$ in $M$ near $m$ such that $H=\{x_1=0\},$
  $C=\{\xi_1=0,\ z=0\}$ with $\phi^*x_1=\xi_1$ and the $\phi^*x_j$ $j>1$
  products (without common factors) of the $\xi_p,$ $p>1.$ Thus
  $R=(|z|^2+\xi_1^2)^{\ha}$ is a defining function for the front face of
  $[M;C]$ and $r=\xi_1/R$ is a defining function for the lift of $H'.$ Thus
  the lift of $x_1$ becomes $rR$ and the other lifts remain unchanged. It
  follows that $\phi\circ\beta$ is everywhere locally, and hence globally, a
  b-fibration. 
\end{proof}
\begin{proposition}\label{GP.352} If $\phi:M\longrightarrow N$ is a
  (simple) b-fibration of compact manifolds with corners and $Y\subset N$
  is a p-submanifold such that $\phi^{-1}(Y)\subset M$ is also a
  p-submanifold then $\phi$ lifts to a b-fibration 
\begin{equation}
\tilde\phi:[M,\phi^{-1}(Y)]\longrightarrow [N,Y].
\label{GP.353}\end{equation}
\end{proposition}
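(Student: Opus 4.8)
The plan is to verify the conclusion directly in the local normal form \eqref{SCL.51} of a simple b-fibration. First I would set up matching coordinates: fix $m\in\phi^{-1}(Y)$ and $p=\phi(m)\in Y$, and choose adapted coordinates $(x,y)$ near $m$ and $(x',y')$ near $p$ in which $\phi^*x'_{j'}=\prod_{j\in J(j')}x_j$ with the $J(j')$ pairwise disjoint and (by simplicity) all exponents $1$, $\phi^*y'_{l'}=y_{\sigma(l')}$ for an injection $\sigma$, and moreover $Y=\{x'_{j'}=0\ (j'\in A),\ y'_{l'}=0\ (l'\in B)\}$. The structural fact I would extract from the hypothesis that $\phi^{-1}(Y)$ be a p-submanifold is that, at $m$, for each $j'\in A$ there is exactly one index $j(j')\in J(j')$ with $x_{j(j')}(m)=0$: if there were two or more, $\{\phi^*x'_{j'}=0\}$ would be a union of that many coordinate hyperplanes near $m$ and $\phi^{-1}(Y)$ could not be a p-submanifold there; if there were none, $m$ would not lie in $\phi^{-1}(Y)$. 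The $j(j')$ are then distinct, locally $\phi^{-1}(Y)=\{x_{j(j')}=0\ (j'\in A),\ y_{\sigma(l')}=0\ (l'\in B)\}$, and $\phi^*x'_{j'}=x_{j(j')}\,u_{j'}$ with $u_{j'}:=\prod_{j\in J(j'),\,j\ne j(j')}x_j>0$ near $m$.

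The one computation that must be carried out is the lifting of $\phi$ through the two blow-downs. Let $R_N$ and $R_M$ denote the front-face defining functions obtained as the Euclidean norms of the normal variables of $Y$ and of $\phi^{-1}(Y)$ respectively; over the front faces one has $x_{j(j')}=R_M\rho_{j(j')}$, $y_{\sigma(l')}=R_M\tau_{l'}$ with $\sum\rho_{j(j')}^2+\sum\tau_{l'}^2=1$. Substituting, I would obtain $\beta_M^*\phi^*R_N=R_M\cdot w$ with $w=\big(\sum_{j'\in A}\rho_{j(j')}^2u_{j'}^2+\sum_{l'\in B}\tau_{l'}^2\big)^{1/2}$ smooth and strictly positive near $\ff_M$. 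It then follows that the pullbacks of the remaining generators of $\CI([N;Y])$ near $\ff_N$, namely $x'_{j'}/R_N\mapsto\rho_{j(j')}u_{j'}/w$ and $y'_{l'}/R_N\mapsto\tau_{l'}/w$, are smooth on $[M;\phi^{-1}(Y)]$, and so are the pullbacks of the coordinates $x'_{j'}$ $(j'\notin A)$ and $y'_{l'}$ $(l'\notin B)$ untouched by the blow-ups — here using that for $j'\notin A$ the set $J(j')$ is disjoint from $\{j(j''):j''\in A\}$, so $\phi^*x'_{j'}$ involves no normal variable of $\phi^{-1}(Y)$. Since off $\ff_M$ the composite $\phi\circ\beta_M$ maps into $N\setminus Y$, where $\beta_N$ is a diffeomorphism, this establishes that $\tilde\phi:=\beta_N^{-1}\circ\phi\circ\beta_M$ is a well-defined smooth map $[M;\phi^{-1}(Y)]\to[N;Y]$.

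Finally I would read off \eqref{SCL.51} for $\tilde\phi$. Near a point $\tilde m\in\ff_M$ the boundary defining functions of $[M;\phi^{-1}(Y)]$ are $R_M$, those $\rho_{j(j')}$ vanishing at $\tilde m$, and the old $x_j$ $(j\notin\{j(j'')\})$ vanishing at $m$; similarly on $[N;Y]$ near $\tilde\phi(\tilde m)$. The formulae above give $\phi^*R_N=R_M\cdot(\text{smooth}>0)$, $\phi^*(x'_{j'}/R_N)=\rho_{j(j')}\cdot(\text{smooth}>0)$ for $j'\in A$, and $\phi^*x'_{j'}=\prod_{j\in J(j')}x_j$ for $j'\notin A$, all exponents $1$ and all the index sets appearing on the right pairwise disjoint, with $R_M$ occurring only in the factorisation of $\phi^*R_N$; and $\phi^*y'_{l'}$ is $y_{\sigma(l')}$ or $\tau_{l'}/w$, which may be taken among the interior coordinates at $\tilde m$. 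This is precisely \eqref{SCL.51}, so $\tilde\phi$ is a simple b-fibration near $\ff_M$; off $\ff_M$ it is one because $\phi$ is and $\beta_M,\beta_N$ restrict there to diffeomorphisms. Surjectivity I would check directly: points of $[N;Y]$ off $\ff_N$ are hit because $\phi$ is onto $N\setminus Y$, and a point of $\ff_N$ over $p\in Y$ is hit because, for any $m\in\phi^{-1}(p)$, the induced map $S^+_m\phi^{-1}(Y)\to S^+_pY$ of spherical normal fibres is, in the coordinates above, the restriction to the partial sphere of the diagonal linear isomorphism scaling the $x'_{j'}$-component by $u_{j'}(m)>0$, hence is onto.

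The step I expect to be the main obstacle is the bookkeeping built around the p-submanifold hypothesis: producing the unique matched normal variable $x_{j(j')}$ for each normal variable $x'_{j'}$ of $Y$ (including arranging the coordinate choices on $M$ and $N$ to be simultaneously compatible with the b-fibration structure), and then confirming that once the front-face variables $R_M,R_N$ are adjoined the disjointness of index sets demanded by \eqref{SCL.51} genuinely persists — in particular that $R_M$ enters exactly one lifted product decomposition. Everything else reduces to the standard local description of blow-up in polar coordinates.
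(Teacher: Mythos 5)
Your proposal is correct and follows essentially the same route as the paper: localize near a point of $\phi^{-1}(Y)$, use the p-submanifold hypothesis to match each normal defining function of $Y$ with exactly one boundary defining function of $M$ (the paper phrases this as each boundary hypersurface of $N$ containing $Y$ having preimage a single boundary hypersurface of $M$), and then check that $\phi$ lifts through the two blow-downs in this local model. The only difference is one of execution: the paper splits into the interior and boundary cases and invokes a local product decomposition in which $\phi$ is the identity on the common normal factor, whereas you carry out the polar-coordinate computation explicitly and read off the normal form \eqref{SCL.51} for $\tilde\phi$ directly, treating both cases uniformly.
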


\begin{proof} In case $Y$ is an interior p-submanifold then $\phi^{-1}(Y)$
  is automatically a p-submanifold. Locally any near point
  $q\in\phi^{-1}(Y)$ and its image $p=\phi(q)$ there are local product
  neighbourhoods $B\times B'$ in $N$ and $B\times B''$ in $M$ where
  $\{b\}\times B'$ is a neighbourhood of $p\in Y,$ $\{b\}\times B''$ is a
  neighbourhood of $q\in \phi^{-1}(Y)$ and $\phi$ is a product map which is
  the identity on $B.$ From this the result follows.

In case $Y$ is a boundary p-submanifold, the assumption that $\phi^{-1}(Y)$
is a (connected) p-submanifold is non-trivial. If $Q$ is the smallest
boundary face of $N$ containing $Y$ it follows that $\phi^{-1}(Q)$ is a
boundary face of $M,$ rather than a union of such. In particular each of
the boundary hypersurfaces of $N$ containing $Y$ has preimage just one
boundary hypersurface of $M.$ Then there is a similar product decomposition
to the interior case and the result follows.
\end{proof}

\begin{proposition}\label{GP.331} If $f:M\longrightarrow N$ is a (simple)
b-fibration and $Y$ is an interior p-submanifold of a boundary
hypersurface of $N$ then $f^{-1}(Y)$ is the union of a p-clean collection,
$X_*,$ in $M$ and, for any order of blow-up, $f$ lifts to a b-fibration 
\begin{equation}
\tilde f:[M;X_*]\longrightarrow [N,Y].
\label{GP.332}\end{equation}
\end{proposition}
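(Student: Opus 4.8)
The plan is to reduce everything to the local normal form of a (simple) b-fibration and then feed the outcome into the blow-up results already established. Let $H$ be the boundary hypersurface of $N$ containing $Y$, with defining function $x_1$, and write $f^{*}x_1$ as the product of the defining functions of those boundary hypersurfaces $H'$ of $M$ lying over $H$ (each exponent $1$, since $f$ is a simple b-fibration); thus $f^{-1}(H)$ is the union of these $H'$. Each restriction $f|_{H'}\colon H'\longrightarrow H$ is again a b-fibration onto $H$, so by Proposition~\ref{GP.128} the set $X_{H'}:=(f|_{H'})^{-1}(Y)$ is an interior p-submanifold of $H'$, hence a boundary p-submanifold of $M$ whose interior points have codimension one. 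Since $f^{-1}(Y)\cap H'=X_{H'}$, we get $f^{-1}(Y)=\bigcup_{H'}X_{H'}$, and $X_*$ is this finite family (taking components if needed). For p-cleanness, pick $m\in f^{-1}(Y)$ and b-fibration coordinates near $m$ and $f(m)$: there $x_1=\xi_1\cdots\xi_s$ in terms of boundary coordinates lying over $H$, the functions $f^{*}y'_l=z'_l$, $l=1,\dots,q$, are among the interior coordinates, and $Y=\{x_1=0,\ y'=0\}$, so each $X_{H'}$ through $m$ is cut out by one $\xi_j$ together with the common equations $z'_1=\dots=z'_q=0$. All of these are coordinate p-submanifolds in a single chart, which is p-cleanness.

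Next I would settle permissibility. The local description shows that the $X_{H'}$ through $m$ are exactly those boundary hypersurfaces of the interior p-submanifold $\{z'_1=\dots=z'_q=0\}$ (locally $f^{-1}$ of an interior extension of $Y$) that lie over $H$. Hence Lemma~\ref{GP.330} applies: any order of blow-up of a subfamily of the boundary hypersurfaces of an interior p-submanifold is permissible, at each stage the submanifold lifts to an interior p-submanifold and the remaining boundary hypersurfaces lift to a b-clean family. Since blow-up is local, this gives permissibility of an arbitrary order for $X_*$ and identifies $[M;X_*]$ independently of the order.

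It then remains to produce $\tilde f$ of \eqref{GP.332} and verify the b-fibration condition \eqref{SCL.51}, both local, so I work in the coordinates above. The target $[N;Y]$ is the polar blow-up of $Y=\{x_1=0,\ y'=0\}$, with front-face defining function $R=(x_1^{2}+|y'|^{2})^{\ha}$, defining function $x_1/R$ for the lift of $H$, and the remaining $x_2,\dots$ unchanged. On the source side one blows up the $X_{H'}=\{\xi_i=0,\ z'=0\}$ in turn. Peeling one off at a time: after blowing up $\{\xi_1=0,\ z'=0\}$ and writing $\xi_1=R_1a$, $z'=R_1w'$ with $R_1$ the front-face defining function and $a$ that of the lift of $\{\xi_1=0\}$, one computes $f^{*}R=R_1\cdot\big((a\,\xi_2\cdots\xi_s)^{2}+|w'|^{2}\big)^{\ha}$ and $f^{*}(x_1/R)=a\,\xi_2\cdots\xi_s\cdot\big((a\,\xi_2\cdots\xi_s)^{2}+|w'|^{2}\big)^{-\ha}$; the second factor is, up to a positive smooth function, the same expression for the reduced data $(\xi_2,\dots,\xi_s;w')$, and the lift of $X_{H'}$ for $H'\neq H'_1$ is $\{\xi_i=0,\ w'=0\}$, so the situation reproduces itself with $s$ decreased by one. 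Iterating, $f^{*}R$ becomes the product of the $s$ successive front-face defining functions times a positive smooth function, $f^{*}(x_1/R)$ becomes the product of the defining functions of the lifts of the $H'$ times a positive smooth function, and $f^{*}x_k$ ($k\ge 2$) and $f^{*}y'_l$ are unchanged by blow-ups in the $\xi_i,z'$ variables. The boundary coordinates occurring in these three kinds of monomials — the new front-face variables, the lifts of the $\xi_i$, and the boundary coordinates over the other $H_k$ — are pairwise distinct, which is exactly the disjointness clause of \eqref{SCL.51}; surjectivity is automatic since $[N;Y]$ is connected, so $\tilde f$ is a (simple) b-fibration.

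The main obstacle is the bookkeeping in this last step: one must confirm that, throughout the iterated polar blow-ups of the several $X_{H'}$ — which all share the factor $\{z'=0\}$ and therefore interact — the pullback $f^{*}R$ really does factor as a monomial in the new front-face defining functions times a positive smooth function, and that no boundary coordinate of $[M;X_*]$ is shared between that monomial and the one giving $f^{*}(x_1/R)$. This, together with the order-independence supplied by Lemma~\ref{GP.330}, is where the hypothesis that $f$ is a \emph{simple} b-fibration (exponents $0$ or $1$) and the precise form of \eqref{SCL.51} are genuinely used.
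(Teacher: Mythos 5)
Your first half follows the paper's own route: you localize near $H$, realize the $X_{H'}$ as the boundary hypersurfaces lying over $H$ of $f^{-1}(\tY)$ for an interior extension $\tY$ of $Y$, invoke Lemma~\ref{GP.330} for p-cleanness and order-independence, and then track $f^*R$ and $f^*(x_1/R)$ through the iterated blow-ups exactly as in \eqref{GP.337}--\eqref{GP.343}. That part is sound (modulo the bookkeeping you yourself flag, which works out as in the paper because after blowing up $X_{H'_1}$ the lift of $\tY$ is separated from the lift of $\{\xi_1=0\}$, so the residual factor is positive where it needs to be).

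The gap is in the last step, where you declare that the monomial structure and disjointness of the pullbacks of boundary defining functions, plus surjectivity, ``is exactly'' \eqref{SCL.51}. It is not: that computation only shows $\tilde f$ is a b-normal b-map, which is precisely where the paper's proof pauses after \eqref{GP.343}. The definition \eqref{SCL.51} in addition requires the pullbacks of the \emph{interior} coordinates of the target to occur among interior coordinates of the source, i.e.\ that $\tilde f$ be a b-submersion, and near the front face of $[N;Y]$ those interior coordinates are not the original $y'_l$ (which vanish on $Y$ and lift to vanish at the front face) but rescaled functions of the type $y'_l/R$; your remark that ``$f^*y'_l$ are unchanged by the blow-ups'' therefore addresses the wrong functions. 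Relatedly, even the smoothness of the lifted map in these angular directions is unchecked in your argument --- you verify only the two defining functions, whereas the paper needs \eqref{GP.340} and its iterates for this. The paper closes both points by showing that the generating b-vector fields \eqref{GP.346} on $[N;Y]$, locally $x\pa_x$, $y_j\pa_{y_l}$, $x\pa_{y_j}$, $x^2\pa_x$, lift to smooth b-vector fields on $[M;X_*]$; your proof needs either that argument or an explicit verification that $\tilde f^*(y'_l/R)$ are smooth and, together with the boundary data already computed, give the required local product form.
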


\begin{proof} All the blow-ups are near the boundary hypersurface
  $H\in\cM_1(N)$ containing $Y$ or its preimage in $M.$ We may therefore
  localize, replacing $N$ by 
\begin{equation}
H\times[0,\epsilon )_x
\label{GP.333}\end{equation}
where $x$ is a boundary defining function for $H.$ This gives a local
extension of $Y$ to 
\begin{equation}
\tY=Y\times[0,\epsilon ),\ \epsilon >0,
\label{GP.334}\end{equation}
which is an interior p-submanifold with $Y=H\cap\tY$ one of its boundary
hypersurfaces. The preimage of $Y$ is therefore the part of the preimage
$f^{-1}(\tY)$ which meets one of the boundary hypersurfaces
$H_i'\in\cM_1(M)$ mapping to $H.$ 

Thus the preimage of $Y$ under $f$ is a collection of the boundary
hypersurfaces of $f^{-1}(\tY).$ It follows from Lemma~\ref{GP.330} that the
blow-up of the $X_*$ is permissible in any order and once an order is
chosen, $[M,X_*]$ is well-defined.

To show that $f$ lifts to b-fibration as in \eqref{GP.332} we first show
that it lifts to a smooth map. This is a local statement near the preimage
in $[M,X_*]$ of an arbitrary point $m'\in f^{-1}(Y)$ with image $m\in Y.$
Now, near $m$ we may choose adapted local coordinates $y_j,$ $j=1,\dots,n,$
$x'_l,$ $y'_r$ in $H$ such that $Y$ is defined by $y=0,$ and the $x'_l$
define the boundaries hypersurfaces of $H.$ Since $f$ is a b-fibration it
decomposes as a product near $m'$ in terms of local coordinates $\xi_i,$
$y,$ $y'$ and boundary defining functions $\xi_p'$ so locally
\begin{equation}
f(\xi,y,\xi',y',z)=(\xi_1\dots\xi_k,y,g(\xi',y'))
\label{GP.335}\end{equation}
where $g$ is a b-fibration in the remaining variables (which can also be
brought to normal form) and the $z$ are interior fibre variables. All the
blow-ups are in the variables $\xi_i$ and $y,$ defining the various $X_i$
locally, so we can ignore the variables $\xi',$ $y'$ and $z$ and suppose
that $Y$ is a point in $H$ which is simply a ball in $\bbB^n_y$ and we are
considering the map 
\begin{equation}%
\begin{gathered}
f:\{(\xi,y)\in[0,1)^k\times\bbB^n;\xi_1\dots\xi_k<1\}\\
f(\xi,y)=(x,y),\ x=\xi_1\dots\xi_k.
\end{gathered}
\label{GP.336}\end{equation}

Consider the lift of a quadratic defining function for $Y,$ 
\begin{equation}
f^*(x^2+|y|^2)=\xi_1^2\dots\xi_k^2+|y|^2.
\label{GP.337}\end{equation}
The blow-up of $X_1=\{\xi_1=0,y=0\}$ introduces the smooth functions 
\begin{equation*}
\eta_1=(\xi_1^2+|y|^2)^{\ha},\
\omega _i=\frac{y_i}{\eta_1},\ \Xi_1=\frac{\xi_1}{\eta_1}
\label{GP.338}\end{equation*}
which generate the \ci\ structure of the blow-up. The lift of $\tY$ is now
$\omega =0$ and 
\begin{equation*}
\xi_1=\eta_1\Xi_1,\ f^*(x^2+|y|^2)=\eta_1^2(\Xi_1^2\xi_2\dots\xi_k^2+|\omega|^2).
\label{GP.339}\end{equation*}
Here $\eta_1$ is a defining function for the front face and $\Xi_1$ is a
defining function for the lift of the boundary hypersurface $\xi_1=0$
containing $X_1.$ It follows that, away from the lift of $\tY,$  $f$ lifts
to a smooth map to the blow-up of $Y$ since $|\omega |^2>0$ and hence  
\begin{equation*}
f^*(r)=\eta_1(\Xi_1^2\xi_2\dots\xi_k^2+|\omega|^2)^\ha,\ f^*(y/r)=\frac{\eta_1}r
\omega.
\label{GP.340}\end{equation*}

Thus we can proceed iteratively blowing up the lift of the next $X_j$
\begin{equation*}
\{\xi_j=0,\ \omega ^{(j-1)}=0\},\ \Mwhere
f^*(r^2)=\eta_1^2\dots\eta_{j-1}^2(\Xi_1^2\dots\Xi_{j-1}^2\xi_j^2\dots\xi_k^2+
|\omega^{(j-1)}|^2)
\label{GP.341}\end{equation*}
with essentially the same argument justifying the smoothness away from the
remaining $X_*.$ After the final step 
\begin{equation*}
f^*(r^2)=\eta_1^2\dots\eta_{j-1}^2(\Xi_1^2\dots\Xi_{k}^2+|\omega^{(k)}|^2).
\label{GP.344}\end{equation*}
Here the final lift of $\tY$ is $\{\omega ^k=0\}$ but this p-submanifold is
disjoint from all the lifted hypersurfaces $\{\Xi_i=0\}$ so the last factor
is strictly positive.

By induction it follows that $f$ lifts to a smooth map \eqref{GP.332} near
the arbitrary point $m'$ and
\begin{equation}
\begin{gathered}
\tilde f^*(r)=a\eta_1\dots\eta_k,\ a>0\text{ and smooth,}\\
\tilde f^*(x/r)=a'\Xi_1\dots\Xi_k,\ a'>0\text{ and smooth.}
\end{gathered}
\label{GP.343}\end{equation}
Thus $\tilde f$ is a b-normal b-map since these are the defining functions
for the boundary faces of $[N;Y].$

To see that $\tilde f$ is a b-fibration it suffices to show that it is a
b-submersion, i.e.\ any element of $\bV([N;Y])$ has a lift to an element of
$\bV([M;X_*]).$ These b-vector fields on $[N;Y]$ are generated by the lift
from $N$ of  
\begin{equation}
\cW=\{W\in\bV(N);W\text{ is tangent to }\tY\}+x\bV(N).
\label{GP.346}\end{equation}
This can be seen, after the simplifications discussed above, from the fact
that these reduce in local coordinates to  
\begin{equation*}
x\pa_x,\ y_j\pa_{y_l},\ x\pa_{y_j}\Mand x^2\pa_x
\label{GP.347}\end{equation*}
for all $j$ and $l.$ Under the iterated blow-up of the $X_i$ these lift to
be smooth, so it follows that $\tilde f$ is a b-fibration.
\end{proof}

\section{Resolution in size order}\label{Res-size}

In the sequel we encounter iterated blow-ups. As noted in the Introduction
several of the results of thus section are extensions of unpublished
work with Michael Singer.

\begin{definition}\label{GP.360} A closed subset, $Q\subset M,$ is
  \emph{liftable} under the blow-up of a p-submanifold, $C,$ if it is
  either contained in $C$ or else $Q\setminus C$ is dense in $Q.$ The lift
  (or proper transform) of $Q,$ $l(Q)\subset [M;C],$ is defined to be $\beta
  ^{-1}(C)$ in the first case and the closure of $\beta ^{-1}(Q\setminus
  C)$ in the second.
\end{definition}
\noindent So in case $Q\subset C,$ $l(Q)\subset\ff_C$ and in the second it
is the closure of $\beta ^{-1}(Q)\setminus\ff_C.$ In either case $\beta (l(Q))=Q.$

Under the blow-up of one p-submanifold of a p-clean collection any other
element is liftable and the lift is a p-submanifold of the blown up manifold.

\begin{lemma}\label{GP.356} If $C$ and $D$ are p-submanifolds of $M$
  intersecting p-cleanly and $C$ is not contained in $D$ then the lift of
  $C$ under the blow-up of $D$ is naturally diffeomorphic to $[C;C\cap D].$
\end{lemma}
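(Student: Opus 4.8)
The plan is to verify the diffeomorphism locally, using the p-clean hypothesis to put everything in a normal form, and then to check that the two constructions—the proper transform of $C$ under $\beta:[M;D]\longrightarrow M$, on the one hand, and the blow-down $[C;C\cap D]\longrightarrow C$, on the other—produce the same $\ci$ structure, patching the local diffeomorphisms together by the naturality (functoriality in the pair) of blow-up. First I would fix a point $q\in C\cap D$ (away from $C\cap D$ there is nothing to prove, since $\beta$ is a diffeomorphism there and $C\cap D$ is exactly the locus removed from $C$ by the inner blow-down) and invoke Definition~\ref{GP.292} to choose coordinates $x_j,y_l$ in which $C$ and $D$ are each cut out by a subset of the $x_j$ together with linear functions of the $y_l$; after a linear change in the $y_l$ I can assume these linear functions are themselves coordinates, so $C=\{x_j=0\ (j\in A_C),\ y_l=0\ (l\in B_C)\}$ and similarly for $D$ with $A_D,B_D$. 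The hypothesis that $C$ is not contained in $D$ means $A_D\cup B_D\not\subseteq A_C\cup B_C$; the centre of blow-up meets $C$ only along $C\cap D$, so the relevant normal directions to $D$ that are transverse to $C$ are precisely those indexed by $(A_D\cup B_D)\setminus(A_C\cup B_C)$, while the directions in $(A_D\cup B_D)\cap(A_C\cup B_C)$ are normal to $D$ inside $C$ as well—i.e. they are exactly the normal directions to $C\cap D$ in $C$.

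Next I would write down the blow-up $[M;D]$ in these coordinates using polar coordinates in the normal variables to $D$ (distinguishing, as in \S\ref{BU}, the boundary normal variables among the $x_j$, which give the positive-sphere front face $\bbS^{n,k}_+$, from the interior normal variables among the $y_l$), and compute the proper transform of $C$: since $C$ is the common zero set of a sub-collection of these same coordinate functions, its lift is obtained by setting to zero precisely the $C\cap D$–normal coordinates among the polar variables and keeping the remaining sphere/radial variables free. Comparing this with the polar-coordinate description of $[C;C\cap D]$—where now one does polar coordinates in $C$ in exactly the $C\cap D$–normal variables—one sees the two local models are literally identified by the coordinate map, compatibly with the two blow-down maps. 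Because the blow-up of a p-submanifold is functorial in the pair, the inclusion $C\hookrightarrow M$ (which restricts to the inclusion $C\cap D\hookrightarrow D$ of centres) induces a canonical map $[C;C\cap D]\longrightarrow [M;D]$ whose image is the lift of $C$; the local computation shows this map is a p-embedding onto the proper transform, hence the asserted natural diffeomorphism, and patching is automatic since all identifications were the canonical functorial ones.

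The main obstacle is the bookkeeping in the mixed boundary/interior case: one must be careful that the sub-collection of coordinates cutting out $C$ splits correctly against the boundary-defining $x_j$ versus the interior $y_l$ used in the polar decomposition, so that the front face of $[C;C\cap D]$ really matches the intersection of the front face of $[M;D]$ with the lift of $C$ (a positive part of a sphere inside a positive part of a sphere), rather than picking up spurious boundary hypersurfaces. The p-clean hypothesis is exactly what guarantees a \emph{single} adapted coordinate system handling $C$, $D$, and $C\cap D$ simultaneously, so once the normal form is set up the remaining verification is the routine check that "polar coordinates in a coordinate subspace, then restrict to a further coordinate subspace" agrees with "restrict first, then take polar coordinates in the trace subspace"—which is immediate from the definition of polar coordinates. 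I would also remark in passing that this lemma is the local input to the commutation-and-lifting arguments for blow-ups referred to after Definition~\ref{GP.135}.
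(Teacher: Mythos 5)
The paper states this lemma without proof, so there is no written argument to compare against line by line; your strategy---reduce to a point of $C\cap D$, use Definition~\ref{GP.292} to put the pair in simultaneous coordinate form, compare polar coordinates, and globalize---is certainly the intended one, and it parallels the paper's written proof of the closely related Lemma~\ref{GP.83}. However, the bookkeeping at the heart of your local computation is stated with the two relevant groups of directions interchanged. With $C=\{x_j=0\ (j\in A_C),\ y_l=0\ (l\in B_C)\}$ and $D=\{x_j=0\ (j\in A_D),\ y_l=0\ (l\in B_D)\}$, the normal directions of $C\cap D$ \emph{inside} $C$ are those indexed by $(A_D\cup B_D)\setminus(A_C\cup B_C)$ (normal to $D$ but tangent to $C$), whereas the directions indexed by $(A_D\cup B_D)\cap(A_C\cup B_C)$ are normal to $C$ itself in $M$; your first paragraph asserts the opposite identification. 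As a consequence the key sentence ``its lift is obtained by setting to zero precisely the $C\cap D$--normal coordinates among the polar variables'' is incorrect as literally written: with the correct meaning of that phrase it kills exactly the wrong angular variables, and with your (swapped) meaning it contradicts the very next sentence, where ``polar coordinates in $C$ in exactly the $C\cap D$--normal variables'' must refer to the difference set. The true statement is that on the proper transform of $C$ the angular variables indexed by $(A_D\cup B_D)\cap(A_C\cup B_C)$ vanish, together with the defining functions of $C$ tangential to $D$ (indexed by $(A_C\cup B_C)\setminus(A_D\cup B_D)$), while the surviving angular variables, indexed by $(A_D\cup B_D)\setminus(A_C\cup B_C)$, together with the restriction of the radial function of $[M;D]$, are precisely the angular and radial data of $[C;C\cap D]$; with this correction the local identification, including the $\bbS^{n,k}_+$ boundary bookkeeping you rightly flag, is exactly right.

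A second point: producing the global map $[C;C\cap D]\longrightarrow[M;D]$ from ``functoriality of blow-up in the pair'' is circular, since the existence of an induced map for the inclusion $(C,C\cap D)\hookrightarrow(M,D)$ is essentially the content of the lemma (Lemma~\ref{GP.278} gives functoriality only under diffeomorphisms of $M$). The cleaner route, and the one used in the paper's proof of Lemma~\ref{GP.83}, is to observe that both the lift of $C$ and $[C;C\cap D]$ contain $C\setminus D$ (lifted) as a dense open set on which the comparison map is the identity; your corrected polar computation shows the identity extends from there to a smooth map with smooth inverse, and uniqueness of such an extension makes the diffeomorphism independent of all coordinate choices, which gives both the patching and the asserted naturality. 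Finally, a sentence is needed to rule out $C$ being \emph{locally} contained in $D$ near some points of $C\cap D$ despite $C\not\subset D$ globally (connectedness of $C$ plus the linear local model does this), since otherwise the closure of $\beta^{-1}(C\setminus D)$ need not behave as claimed along that part of $C\cap D$.
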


To proceed with an iterated blow up it needs to be checked that
successive centres are p-submanifolds when the preceding ones have been
blown up. If $\FC$ is a p-clean collection of submanifolds of $M$ with a
given order we define successive blow-ups inductively. 

\begin{definition}\label{GP.183} An order of a p-clean collection of
  submanifolds is \emph{permissible for resolution} if, at succesive steps
  in the blow up, the next element is liftable through each preceding
  blow-up and the final lift is a p-submanifold. We call the manifold
  resulting from the permissible blow up of a p-clean collection a
  \emph{resolution} (of the family or manifold). Two permissible orders are
  equivalent if the identity on the complement of the union of the centres
  extends to a diffeomorphism of the resolutions.
\end{definition}

\begin{lemma}\label{GP.330} If $Y\subset M$ is an interior p-submanifold of
  a manifold with corners then any order on the boundary hypersurfaces of
  $Y$ is permissible for resolution and at each stage the remaining boundary
  hypersurfaces lift to be a p-clean family.
\end{lemma}

\begin{proof} The boundary hypersurfaces of an interior p-submanifold
  certainly form a p-clean collection since locally, near any point, $Y$ is
  defined by the vanishing of some tangential coordinates, $y_i,$
  $i=1,\dots,q.$ Under the blow up of one boundary hypersurface, $Y$ lifts
  to again be an interior p-submanifold and the remaining boundary
  hypersurfaces lift to be boundary hypersurfaces of the lift. Thus the
  result follows by induction.
\end{proof}

In particular for collections of boundary faces any order of blow-up is
permissible. However the final result may depend on the order.

In general the lifts of the remaining elements in a p-clean collection
under the blow-up of one element, although well-defined, in the steps of a
permissible resolution do not form a p-clean collection. This can be seen
from the (relevant) example of three lines all lying in a plane, and their
intersection, in $\bbR^3:$
\begin{multline}
F_1=\{x_1=0=x_3\},\ F_2=\{(0,0,0)\},\\ F_3=\{x_2=0=x_3\},\ F_4=\{x_1=x_2,
x_3=0\}. 
\label{GP.324}\end{multline}
After the blow-up of $F_1$ there is no simultaneous product
decomposition. This is restored by the blow up of $F_2.$ This is an example
of an intersection order as discussed below.

\begin{definition}\label{GP.319} 
A \emph{size order} for a p-clean collection is an order in which the
dimension is (weakly) increasing.
A p-clean collection, $\FC,$ of submanifolds of $M$ is intersection-closed
(or a \emph{pic}) if 
\begin{equation}
C_1,\ C_2\in\FC\Longrightarrow C_1\cap C_2=\emptyset\Mor C_1\cap C_2\in\FC.
\label{GP.358}\end{equation}
\end{definition}
\noindent Clearly a size order always exists.

Note that there is an issue to do with connectedness. Generally
submanifolds are taken to be connected but then the intersection of two
submanifolds need not be connected. To avoid pedantry we shall henceforth
allow manifolds to have a finite number of connected components all of the
same dimension but think of then as such, i.e. finite unions of connected
manifolds. So the condition in \eqref{GP.358} can be written $C_1\cap
C_2\subset\FC,$ meaning each component is an element. In this sense we
think of submanifolds with several components as `collective submanifolds'.

With this proviso on connectedness, pics have natural functorial
properties. If $B$ is a boundary face of $M$ or $C\in\FC$ then the
collections 
\begin{equation}
\{B\cap C',\ C'\in\FC\}\Mand \{C\cap C',\ C';\in\FC\}
\label{GP.359}\end{equation}
are pics, written $B\cap\FC$ and $C\cap\FC$ (in $C)$ respectively.

\begin{proposition}\label{GP.82} Under the blow-up of an element of minimal
  dimension of a pic the lifts of the remaining elements form a pic.
\end{proposition}

\begin{proof} Since no other element can be contained in an element of
  minimal dimension, $F,$ the lifts of the other elements are the closures
  of the inverse images of the complements of $F$ under the blow-down
  map. The condition that these form a p-clean collection is everywhere
  local and certainly satisfied away from the front face of the blow-up
  where they are unchanged. Consider a point $p$ in the front face and
  coordinates $x'_i,$ $x''_j,$ $y'_l,$ $y''_p,$ based at the image $\beta
  (p)$ under the blow-down map where $F$ is locally defined by $x'=0,$
  $y'=0.$ If an element of the p-clean collection contains $\beta(p)$ then
  it must contain $F$ locally since otherwise the intersection with $F$
  would be smaller. Thus we can suppose that the other elements through
  $\beta (p)$ are each defined by some subset of the $x'_i$ and linear
  functions of the $y'_l.$ One of the $x'_i$ or $y'_l,$ denoted $r,$
  dominates the others near $p$ and it is then a defining function for the
  front face and the remaining $x'_i/r$ and $y'_l/r$ are coordinates on the
  front face near $p.$ Consider the elements which lift to contain $p.$
  They are transversal to the front face so $dr\not=0$ on them and they are
  each defined locally by some of the $x'_i/r$ and some linear functions of
  the $y'_l/r.$ By subtracting the values at $p$ we may renormalize the
  (non-trivial) $y'_l/r$ to give $Y'_l$ which vanish at $p.$ Then the
  lifted manifolds are defined by the appropriate $x'_r/r$ and affine
  functions of the $Y'_l.$ To vanish at $p$ the affine functions must be
  linear. This shows that the lifted collection is p-clean.

To see that the lifted collection is closed under intersection it suffices
to note that the lift of the intersection of any two elements is the
intersection of the lifts except in case two manifolds intersect in $F;$
then their lifts are disjoint.
\end{proof}

\begin{corollary}\label{GP.361} If $\FC$ is a pic in a compact manifold
  with corners, $M,$ then any size order is permissible and the resulting
  resolution, denoted $[M;\FC]_{\le},$ or simply $[M;\FC],$ is independent
  of choice.
\end{corollary}

\begin{proof} Certainly the elements are partially ordered by dimension, so a
  size order is simply a total order compatible with this partial
  order. The elements of minimal dimension are necessarily disjoint and
  hence can be blown up in any order. 

Thus the iterative blow-up is well-defined and the only choice at each
level is the irrelevant one of the order of blow-up of the (at the stage of
blow up) disjoint elements of minimal dimension.
\end{proof}

It is useful to have a description of the resolved manifold.

\begin{proposition}\label{GP.357} The (size-ordered) resolution $\beta
:[M;\FC]\longrightarrow M$ of a pic in
a compact manifold has (collective) boundary hypersurfaces labelled by
$\cM_1(M)\cup\FC$ and denoted respectively $\beta ^{\#}(H)$ and $\ff_C,$ for
$H\in\cM_1(M)$ and $C\in\FC,$ with $\beta ^{\#}(H)=[H;\FC\cap H]$ and more
generally $\beta :\ff_C\longrightarrow C$ being the composite $\beta _C\phi
_C$ of a fibration $\phi _C:\ff_C\longrightarrow [C;\FC\cap C],$ with fibre
a resolution of a fractional sphere, and the blow-down map for $[C;\FC\cap C].$
\end{proposition}

\begin{proof} Applying Lemma~\ref{GP.356} repeatedly gives the first part
  of the result. The description of $\ff_C,$ for $C\in\FC,$ follows
  similarly. The blow-up of all the elements of $\FC$ which are smaller
  than $C$ replaces it by $[C;\FC\cap C]$ and giving the blow-down map
  $\beta_C.$ Blowing up the lift of $C$ up gives an initial
  fibration $\phi'_C.$ Subsequent (larger) elements of $\FC$ either lift to
  be disjoint from the lift of $C$ or originally contained it. The latter
  form a pic consisting of sections of the fibration $\phi'_C$ so these
  blow-ups replace $\phi'_C$ by a final fibration
  $\phi_C:\ff_C\longrightarrow [C;\FC\cap C]$ with resolved fibres.
\end{proof}

\section{Resolution in intersection order}\label{Res-int}

\begin{definition}\label{GP.320} An \emph{intersection order} on a pic is
an order such that for any two elements $F<G$ with non-empty intersection
$F\cap G\le G.$
\end{definition}

Clearly a size order is an intersection order. An intersection order is a
size order precisely when no element is preceded by a larger one. Thus the
integer
\begin{equation}
\mu(o)=\sum\limits_{F\le_o G}(\dim F-\dim G)_+
\label{GP.366}\end{equation}
vanishes only for size orders. This leads to a simple `path' from an
intersection order to a size order with each step an exchange of
neighbours.

Namely for an intersection order on a give pic, $\FC,$ consider the first
element $F_i$ such that $\dim F_{i+1}<\dim F_i.$ Then the order $o'$
  obtained from $o$ by exchange of $F_i$ and $F_{i+1},$ so 
\begin{equation}
F_{i+1}<_{o'}F_i,\ F<_oG\Longrightarrow F<_{o'}G\text{ otherwise}
\label{GP.367}\end{equation}
is a new intersection order with $\mu(o')<\mu(o).$ After a finite
number of steps this leads to a size order. Note that for the pair involved
in passing from $o$ to $o'$ 
\begin{equation}
F_i\cap F_{i+1}=\emptyset\Mor F_i\cap F_{i+1}<F_i\Mor F_{i+1}\subset F_i.
\label{GP.368}\end{equation}

\begin{lemma}\label{GP.362} Suppose $C_i\subset M,$ $i=1,2$ are
  p-submanifolds which either intersect transversally or else $C_1\subset
  C_2$ then if $Q\subset M$ is a closed subset liftable under blow up of
  $C_1$ with lift, $l_1(Q),$ liftable under blow up of $l_1(C_2)$ then $Q$ is
  liftable under blow up of $C_2$ with lift $l_2(Q)$ liftable under blow up
  of $l_2(C_1)$ and the two double lifts are the same.
\end{lemma}

\begin{proof} Transversal intersection includes the case that $C_1$ and
  $C_2$ are disjoint, in which case the result is obvious.

If the intersection is non-empty and transversal it suffice to consider a
small neighbourhood of point of $Q.$ If this is not in $C_1\cap C_2$ then
again this commutativity result holds trivially. Take a neighbourhood of a
point in the intersection which is the product of three balls $B_1\times
B_2\times B$ where there are points $p_i\in B_i$ such that locally
\begin{equation}
C_1=\{p_1\}\times B_2\times B,\ C_2=B_1\times \{p_2\}\times B.
\label{GP.363}\end{equation}
Then the blow-up of $C_1$ is $[B_1,\{p_1\}]\times B_2\times B$ and
$l_1(C_2)=[B_1,\{p_1\}]\times\{p_2\}\times B.$

In case $Q\subset C_1$ it is of the form $\{p_1\}\times F$ with lift
$l_1(Q)=\ff([B_1,\{p_1\}])\times F.$ This is liftable for the blow-up of
$l_1(C_2)$ if $l_1(Q)\subset l_1(C_2),$ which is equivalent to
$F=\{p_2\}\times F'$ so $Q\subset C_1\cap C_2$ and 
\begin{equation*}
l_2(l_1(Q))=\ff([B_1,\{p_1\}])\times\ff([B_2,\{p_2\}])\times F',
\label{GP.364}\end{equation*}
which is symmetric so the result follows. The other possibility, given that
$Q\subset C_1,$ is that $(l_1(Q)\setminus\ff([B_1,\{p_1\}])\times
\{p_2\}\times B$ is dense in $l_1(Q)$ which is equivalent to the density of
$F'=F\setminus(\{p_2\}\times B)$ in $F$ and hence of $Q\setminus C_2$ in $Q.$
Thus $Q$ is liftable for the blow-up of $C_2$ with lift $\{p_1\}\times
\tilde F$ where $\tilde F$ is the closure of $F'$ in $[B_2\times
  \{p_2\}]\times B.$ This is contained in $l_2(C_1)$ and the double lifts
are the same.

Next consider the case, as in \eqref{GP.363}, that $Q\setminus C_1$ is dense
in $Q.$ Then $l_1(Q)\subset l_1(C_2)$ if and only if $Q\subset C_2$ and the
result follows readily. In the remaining case $Q\setminus(C_1\cup C_2)$ is
dense in $Q$ and again the result follows. This completes the case that
$C_1$ and $C_2$ meet transversally.

Now we may suppose $C_1\subset C_2.$ If $Q\setminus C_1$ is dense in $Q$
then $l_1(Q)\subset l_1(C_2)$ if and only if $Q\subset C_2.$ Thus $l_1(Q)$
is liftable under blow up of $C_2$ with lift the union of the fibres over
$Q.$ The $l_2(Q)\setminus l_2(C_1)$ is the union of the fibres over
$Q\setminus C_1$ so is dense in $l_2(Q)$ and the result follows. If
$l_1(Q)\setminus l_1(C_2)$ is dense in $l_1(Q)$ then $Q\setminus C_2$ is
dense in $Q$ and both double lifts are defined and equal to the closure of
the double lift of $Q\setminus C_2.$ In case $Q\subset C_1\subset C_2$ all
lifts are preimages and commutativity follows.
\end{proof}

\begin{lemma}\label{GP.321} For any intersection order on a pic, $\FC,$ and
  any $G\in\FC$
\begin{equation}
\FC_{G}=\{F\in\FC;F\le G\}
\label{GP.322}\end{equation}
is closed under intersection.
\end{lemma}

\begin{proof} Clearly this is true when $G$ is the first element so we can
  proceed by induction over the order, assuming that $\FC_K$ is closed
  under intersection for all $K<G.$ Then it suffices to show that $F\cap
  G\in\FC_G$ if $F<G$ which is Definition~\ref{GP.320}.
\end{proof}

\begin{theorem}\label{GP.323} Any intersection order on a pic, $\FC,$ in
  $M$ is permissible and the resulting resolution is canonically
  diffeomorphic to $[M;\FC]$ from a size order, i.e.\ all intersection
  orders are equivalent for resolution.
\end{theorem}

\begin{proof} We proceed by induction over the number of elements of the
  collection. To prove permissibility of blow-up we may assume the result
  for the collection consisting of the elements preceding the last using
  Lemma~\ref{GP.321}. We start by considering the special intersection
  given by a size order on all the elements preceing the last element, $G.$
  Thus $G$ has some position in a size order on the whole collection. The
  elements of strictly smaller dimension than $G$ may then be blown up in
  this size order and the lift of $G$ is one of the minimal remaining
  elements. The initial intersection of each of the remaining elements is
  no larger than $G.$ If this intersection is strictly smaller than $G$
  then it has already been blown up and the lift of the element is disjoint
  from $G.$ Thus the remaining lifted elements which meet $G$ are those
  which initially contained it. In fact they, without $G,$ form a pic since
  the intersection of any two must, in the initial intersection order,
  precede one of them and hence strictly precede $G.$ Thus at this point in
  the resolution the remaining elements which intersect $G$ form a pic with
  a unique minimal element $H.$ Proceeding from the complete blow up in
  size order, $G$ and $H$ may be interchanged, by commutativity. When $H$
  is blown up all the other (relevant) lifted centres meet the front face
  it produces in sections of the fibration over $H$ and the lift of $G$ is
  its inverse image under this blow-up. It follows that the lift of $G$ and
  each of the remaining elements is transversal. Thus $G$ can be commuted
  back to the last position showing that this special intersection order is
  permissible for resolution and equivalent to a size order.

To proceed further we alter the size order on the elements preceding $G$
step by step, using \eqref{GP.367} (in reverse) to successively exchange
neighbours. Thus we can again assume inductively that up to some point
these resolutions are all permissible and the elements preceeding the pair
exchanged are in size order. Then again it follows that the lifts through
the blow-up of these earlier elements leave the lifts of $F_i$ and
$F_{i+1}$ either transversal (including disjoint) or with the larger
included in the smaller. Then Lemma~\ref{GP.362}, applied with $Q$ the lift
of $G,$ proves the inductive step showing that the new order is also
permissible.
\end{proof}

\begin{corollary*}\label{GP.182} Suppose $\FC$ is a p-clean collection
  which can be written 
\begin{equation}
\FC=\FG\sqcup \FR
\label{GP.254}\end{equation}
where $\FG$ and $\FR$ are separately closed under intersection and the
intersection of an element of $\FG$ with an element of $\cR$ lies in $\cR,$
then an order in which the $\FG$ appear first and then the $\FR,$ both in
a size order, is permissible and results in the same space 
\begin{equation}
[M;\FC]=[[M;\FG];\FR].
\label{GP.255}\end{equation}
\end{corollary*}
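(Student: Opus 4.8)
The Corollary asserts: if a p-clean collection $\FC = \FG \sqcup \FR$ decomposes so that $\FG$ and $\FR$ are each intersection-closed and $G \cap R \in \FR$ whenever $G \in \FG$, $R \in \FR$, then blowing up $\FG$ first (size order) and then $\FR$ (size order) is permissible and yields $[M;\FC] = [[M;\FG];\FR]$. The natural tool is Theorem~\ref{GP.323}: if I can exhibit a *single* ordering of all of $\FC$ that is an intersection order, then the iterated blow-up in that order is permissible and canonically diffeomorphic to the size-order resolution $[M;\FC]$; and if that ordering happens to list all of $\FG$ before all of $\FR$, the iterated blow-up factors as $[[M;\FG];\FR]$ essentially by definition of iterated blow-up.

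**The plan.** First I would observe that $\FC$ is itself closed under intersection: intersections within $\FG$ land in $\FG$, within $\FR$ land in $\FR$, and mixed intersections land in $\FR$ by hypothesis; so Theorem~\ref{GP.323} applies to $\FC$ once we produce an intersection order. Next, define the candidate order on $\FC$: take a size order $G_1, \dots, G_a$ on $\FG$, take a size order $R_1, \dots, R_b$ on $\FR$, and concatenate to get $G_1 < \dots < G_a < R_1 < \dots < R_b$. I must check this is an intersection order in the sense of Definition~\ref{GP.320}: whenever $F < F'$ and $F \cap F' \neq \emptyset$, then $F \cap F' \le F'$. Split into three cases. (i) $F, F' \in \FG$: then $F \cap F' \in \FG$ and since the restriction to $\FG$ is a size order (hence an intersection order), $F \cap F' \le F'$ within the $\FG$-block, and the $\FG$-block is an initial segment so the inequality persists in $\FC$. (ii) $F, F' \in \FR$: symmetric, using that the $\FR$-block is a terminal segment and a size order on it. (iii) $F \in \FG$, $F' \in \FR$ (this is the only mixed case consistent with $F < F'$): by hypothesis $F \cap F' \in \FR$, and every element of $\FR$ is $\le F'$... wait — not automatically; I need $F \cap F' \le F'$ *within* the $\FR$ size order. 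But $F \cap F' \subseteq F'$, and in a size order on the intersection-closed family $\FR$ a subset has dimension $\le \dim F'$, so it appears no later than $F'$; hence $F \cap F' \le F'$ in the $\FR$-block, and since that block is terminal, $F \cap F' \le F'$ in $\FC$. (Strictly one should be careful if $\dim(F \cap F') = \dim F'$, i.e. $F \cap F' = F'$, i.e. $F' \subseteq F$; then $F \cap F' = F' \le F'$ trivially.) So the concatenated order is an intersection order.

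**Finishing.** Now apply Theorem~\ref{GP.323}: blowing up $\FC$ in this intersection order is permissible and the result is canonically diffeomorphic to $[M;\FC]$ (the size-order resolution). But blowing up in the concatenated order means: first blow up $G_1, \dots, G_a$ in size order — by definition this produces $[M;\FG]$ — and then blow up the lifts of $R_1, \dots, R_b$ in that order; since the $R_i$-order is a size order on $\FR$ and (by Lemma~\ref{GP.321} applied inside the iterated blow-up, or simply because we are blowing up an intersection-closed p-clean family in size order) this second stage is exactly $[\,[M;\FG];\FR\,]$. Hence $[M;\FC] = [[M;\FG];\FR]$, with the identification being the identity on the complement of the union of centres, which is the meaning of the equality. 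This also records that the concatenated order is permissible, which is the first assertion of the Corollary.

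**Expected main obstacle.** The only real subtlety is case (iii) of the intersection-order check — confirming that $F \cap F'$, which lands in $\FR$ by hypothesis, actually precedes or equals $F'$ in the chosen size order on $\FR$. This is clean once one notes $F \cap F' \subseteq F'$ forces $\dim(F\cap F') \le \dim F'$, so it comes no later in any size order; but one should be slightly careful that "intersection order" is about the global order on $\FC$, and verify the terminal/initial-segment structure of the $\FG$- and $\FR$-blocks does the bookkeeping correctly. A secondary point worth a sentence: one must know that the second-stage family (the lifts of $\FR$ after blowing up $\FG$) is still p-clean and intersection-closed so that $[[M;\FG];\FR]$ even makes sense as a size-order resolution — but this is exactly what permissibility of the concatenated order, guaranteed by Theorem~\ref{GP.323}, provides, so there is no circularity.
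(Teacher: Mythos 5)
Your proposal is correct and is essentially the paper's argument: the paper's entire proof is the observation that the concatenated order (all of $\FG$ in size order, then all of $\FR$) is an intersection order on the intersection-closed collection $\FC,$ so Theorem~\ref{GP.323} gives permissibility and the identification $[M;\FC]=[[M;\FG];\FR].$ You have simply spelled out the case-by-case verification that the paper leaves implicit.
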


\begin{proof} This is an intersection order.
\end{proof}

\section{Diagonals and multidiagonals}\label{MD}

Consider a compact generalized product, where the manifolds $M[k]$ are all
supposed connected. The (multi)diagonals in $M[k]$ are labelled by
partitions, $\mathfrak{P},$ of $J(k)$ corresponding to disjoint subsets
$P_l,$ $l=1,\dots,L(\mathfrak{P}),$ each with at least two
elements, the remainder being singletons. The corresponding diagonal
\begin{equation}
D_{\mathfrak{P}}=D_{\mathfrak{P}}^{(k)} \subset M[k]
\label{GP.206}\end{equation}
is the image of a p-embedding. In general there are several choices of
surjective map $I:J(k)\longrightarrow J(k_{\mathfrak{P}}),$ where
$k_{\mathfrak{P}}$ is $L(\mathfrak{P})$ plus the number of singletons such
that $D_{\mathfrak{P}}$ is the image of
$S_{I}:M[k_{\mathfrak{P}}]\longrightarrow M[k].$ It is convenient to make a
specific choice. Namely we take
\begin{equation}
I_{\mathfrak{P}}:J(k)\longrightarrow J(k_{\mathfrak{P}})
\label{GP.207}\end{equation}
to be the map which sends each of the sets $\cP_{j},$
determining $\mathfrak{P},$ to $j$ and is strictly increasing on the
singletons. The associated p-embedding will also be denoted $D_{\mathfrak{P}}.$

The functorial properties of the generalized product show that
the image $D_{\mathfrak{P}}$ under any of the structural diffeomorphisms of
$M[k],$ forming a representation of the permutation group, is another
$D_{\mathfrak{P}'}.$

Under the partial order of partitions, where $\mathfrak{Q}<\mathfrak{P}$ if every
$Q_j$ is contained in some $P_l,$
\begin{equation}
\mathfrak{Q}<\mathfrak{P}\Longrightarrow D_{\mathfrak{P}}\subset D_{\mathfrak{Q}}
\label{GP.208}\end{equation}
since the surjection defining $D_{\mathfrak{P}}$ can be factored through
that defining $D_{\mathfrak{Q}}.$

Consider the diagonals, $D_{i,j},$ corresponding to a partition with one
non-trivial subset containing just two distinct element, $i$ and $j.$ For
any partition $\mathfrak{P}$ there is a structural diffeomorphism mapping
$D_{\mathfrak{P}}$ to $D_{\mathfrak{Q}}$ where the elements of each
$\mathfrak{Q}_j$ are successive integers in $J(k)$
\begin{equation}
Q_j=\{r_j,r_j+1,\dots,r_j+k_j\}.
\label{GP.209}\end{equation}
It follows by functoriality that
\begin{equation}
D_{\mathfrak{Q}}\subset\bigcap_{\{i;i,i+1\in Q_j\}} D_{i,i+1}
\label{GP.210}\end{equation}
where subsets of $J(k)$ with at least two elements can be identifed with
partitions. Note that the corresponding $D_{P}$ are the `true' diagonals and
we think of $D_{i,j}$ corresponding to $P=\{i,j\}$ as a `simple
diagonal'. Indeed, the map $J(k)\longrightarrow J(k')$ defining the
diagonal can be factored through the $J(k)\longrightarrow J(k-1)$ defining
any one of the simple diagonals on the right in \eqref{GP.210}.

We proceed to show that there is equality in \eqref{GP.210} (which of
course is trivial unless $k\ge3)$ but first consider a single simple
diagonal. Let $\Pi_{i,j}:M[k]\longrightarrow M[2],$ for $i<j\le k,$ be the
structural b-fibrations corresponding to the injective map
$J(2)\longrightarrow J(k)$ taking $1$ to $i$ and $2$ to $j.$

\begin{lemma}\label{GP.212} The diagonal $D_{i,j}$ in $M[k]$ for $i<j$ and
  $k\ge3$ satisfies
\begin{equation}
D_{i,j}=\Pi_{i,j}^{-1}(D).
\label{GP.202}\end{equation}
\end{lemma}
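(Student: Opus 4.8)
The plan is to prove the two inclusions in \eqref{GP.202} separately, using functoriality in one direction and the local normal form of a b-fibration near the diagonal in the other. For the inclusion $D_{i,j}\subset\Pi_{i,j}^{-1}(D)$, observe that the composite $J(2)\longrightarrow J(k)\longrightarrow J(k-1)$ of the injective map defining $\Pi_{i,j}$ with the surjective map defining $D_{i,j}$ (which sends $\{i,j\}$ to a single point) is the unique map $J(2)\longrightarrow J(1)$; hence at the level of structure maps $\Pi_{i,j}\circ D_{i,j}=D$ (the $2$-fold diagonal $M[1]\longrightarrow M[2]$), which gives immediately $\Pi_{i,j}(D_{i,j})=D$, i.e. $D_{i,j}\subset\Pi_{i,j}^{-1}(D)$. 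This is the easy direction and is pure functoriality, exactly as in the proof of Proposition~\ref{GP.136}.

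For the reverse inclusion $\Pi_{i,j}^{-1}(D)\subset D_{i,j}$, the key point is a dimension count combined with the fact (from Proposition~\ref{GP.128}) that $\Pi_{i,j}^{-1}(D)$ is an interior p-submanifold, since $\Pi_{i,j}$ is a b-fibration and $D\subset M[2]$ is an interior p-submanifold. First I would compute dimensions: $\dim M[k]=\mu+(k-1)\kappa$ and $\dim M[2]=\mu+\kappa$, and $\Pi_{i,j}$ being a b-fibration has fibre dimension $\dim M[k]-\dim M[2]=(k-2)\kappa$; since $D\subset M[2]$ has $\dim D=\dim M[1]=\mu$ (it is the image of the p-embedding $M[1]\longrightarrow M[2]$), the preimage $\Pi_{i,j}^{-1}(D)$ has dimension $\mu+(k-2)\kappa$. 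On the other hand $D_{i,j}$ is the image of the p-embedding $S_{I}:M[k-1]\longrightarrow M[k]$, so $\dim D_{i,j}=\mu+(k-2)\kappa$ as well. Thus both are interior p-submanifolds of $M[k]$ of the same dimension, and $D_{i,j}\subset\Pi_{i,j}^{-1}(D)$ by the first paragraph; since $M[k]$ is connected and $D_{i,j}$ is closed (p-submanifolds are closed here) and $\Pi_{i,j}^{-1}(D)$ is a connected p-submanifold (connectedness of fibres of the structural b-fibrations is part of the compactness hypothesis, Definition~\ref{GP.234}, and $D\cong M[1]$ is connected), a closed submanifold contained in a connected manifold of the same dimension must be the whole thing. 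Hence $\Pi_{i,j}^{-1}(D)=D_{i,j}$.

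The main obstacle I anticipate is justifying that $\Pi_{i,j}^{-1}(D)$ is \emph{connected}, so that the ``equal dimension plus inclusion implies equality'' argument applies. Connectedness of the total space $\Pi_{i,j}^{-1}(D)$ follows once one knows the fibres of $\Pi_{i,j}$ restricted over $D$ are connected and the base $D\cong M[1]$ is connected; the fibre connectedness is exactly the content of the compactness hypothesis on the structural b-fibrations, but one should check that $\Pi_{i,j}$ is indeed (conjugate under the permutation action to) one of the structural b-fibrations to which that hypothesis applies, which is clear since it corresponds to an injective map $J(2)\longrightarrow J(k)$. An alternative, more hands-on route avoiding any appeal to connectedness would be to work in an adapted local product decomposition near a point of $\Pi_{i,j}^{-1}(D)$: writing $\Pi_{i,j}$ in the b-fibration normal form of \eqref{SCL.51} and $D\subset M[2]$ in its p-submanifold normal form (interior coordinates vanishing, product boundary structure), one reads off directly that $\Pi_{i,j}^{-1}(D)$ is cut out by the same equations defining $D_{i,j}$ locally; this makes the reverse inclusion manifest pointwise and is essentially the ``local structure'' argument used for Proposition~\ref{GP.128}. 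I would present the dimension-count version as the main argument and remark that it also follows locally.
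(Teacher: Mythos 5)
Your proof is correct and is essentially the paper's own argument: functoriality gives $D_{i,j}\subset\Pi_{i,j}^{-1}(D)$, then Proposition~\ref{GP.128} together with the dimension count shows $D_{i,j}$ is a component of the interior p-submanifold $\Pi_{i,j}^{-1}(D)$, and the assumed connectedness of the fibres of the structural b-fibrations yields equality. One cosmetic correction: the composite $J(2)\to J(k)\to J(k-1)$ is a constant map (factoring through $J(1)$) rather than literally the map to $J(1)$, so the identity of structure maps is $\Pi_{i,j}\circ D_{i,j}=D\circ S_I$ with $S_I:M[k-1]\to M[1]$ a projection, as in \eqref{GP.203}; since $S_I$ is surjective this does not affect your conclusion that $\Pi_{i,j}(D_{i,j})=D$.
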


\begin{proof} The basic diagonal $D\subset M[2]$ is the image of the
  embedding corresponding to the unique map $J(2)\longrightarrow J(1).$ For
  any $i<j$ the functorial properties show that 
\begin{equation}
\Pi_{i,j}D_{i,j}=DS_I,\ I:J(1)\longrightarrow J(k-1),\ I(1)=i.
\label{GP.203}\end{equation}
Thus it follows that as submanifolds $\Pi_{i,j}(D_{i,j})\subset D.$ The
inverse image, $\Pi_{i,j}^{-1}(D),$ of an interior p-submanifold under a
b-fibration is an interior p-submanifold and it follows that the dimension
is $\mu +(k-2)\kappa.$ Since this is the same as that of $M[k-1]$ and hence of
the submanifold $D_{i,j},$ $D_{i,j}$ must be a component of
$\Pi_{i,j}^{-1}(D).$ Since we are requiring the connectivity of the fibres of
b-fibrations \eqref{GP.202} holds.
\end{proof}

\begin{lemma}\label{GP.201} The p-submanifolds $D_{1,2}$ and $D_{1,3}$ in
  $M[k]$ for $k\ge3$ intersect in the diagonal $D_{P}$ corresponding to the single
  subset $P=\{1,2,3\}$ in $J(k)$ and the intersection is transversal.
\end{lemma}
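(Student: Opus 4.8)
The containment $D_P\subseteq D_{1,2}\cap D_{1,3}$ is immediate from \eqref{GP.208}, since the partition with single non-trivial block $\{1,2,3\}$ dominates both the partition $\{1,2\}$ and the partition $\{1,3\}$. So what remains is the reverse inclusion together with transversality along $D_P$. The plan is to dispose of the case $k=3$ directly and then reduce the general case to it by pulling back along the structure b-fibration $\Pi=\Pi_{1,2,3}\colon M[k]\longrightarrow M[3]$ attached to the inclusion $J(3)\hookrightarrow J(k)$.

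For $k=3$ I would combine two functorial identities with Lemma~\ref{GP.212}. Composing index maps gives $\Pi_C\circ D_{1,2}=\Id_{M[2]}$ and $\Pi_S\circ D_{1,3}=\Id_{M[2]}$, so $D_{1,2}$ is the image of a section of the b-fibration $\Pi_C$ and $D_{1,3}$ of a section of $\Pi_S$; in particular $\Pi_C$ restricts to a diffeomorphism $D_{1,2}\longrightarrow M[2]$, and comparing dimensions (using that $(\Pi_C)_*$ is b-surjective with $\kappa$-dimensional null space) one gets the splitting $\bT M[3]=\bT D_{1,2}\oplus\nul((\Pi_C)_*)$ at each point of $D_{1,2}$. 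On the other hand Lemma~\ref{GP.212} identifies $D_{1,2}=\Pi_S^{-1}(D)$ and $D_{1,3}=\Pi_C^{-1}(D)$, and the null space of $(\Pi_C)_*$ is automatically tangent to any $\Pi_C$-preimage, so $\nul((\Pi_C)_*)\subseteq\bT D_{1,3}$ at each point of $D_{1,3}$. At a point of $D_P\subseteq D_{1,2}\cap D_{1,3}$ these combine to give $\bT D_{1,2}+\bT D_{1,3}\supseteq\bT D_{1,2}+\nul((\Pi_C)_*)=\bT M[3]$, which is transversality (all three submanifolds are interior, so this is transversality in the ordinary sense as well). To pin down the intersection itself, note that since $D_{1,3}=\Pi_C^{-1}(D)$ and $\Pi_C$ maps $D_{1,2}$ diffeomorphically onto $M[2]$ we have $D_{1,2}\cap D_{1,3}=(\Pi_C|_{D_{1,2}})^{-1}(D)$, which is diffeomorphic to $D$ and hence connected; it contains the connected p-submanifold $D_P$, which by the transversality just established is open in it (it has the correct dimension $\mu$) and, being a p-submanifold, closed in it — so $D_{1,2}\cap D_{1,3}=D_P$.

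For general $k\ge3$, functoriality gives $\Pi_{1,2}=\Pi_S\circ\Pi$ and $\Pi_{1,3}=\Pi_C\circ\Pi$, so Lemma~\ref{GP.212} yields $D_{1,2}^{(k)}=\Pi^{-1}(D_{1,2}^{(3)})$ and $D_{1,3}^{(k)}=\Pi^{-1}(D_{1,3}^{(3)})$; intersecting, $D_{1,2}^{(k)}\cap D_{1,3}^{(k)}=\Pi^{-1}(D_{1,2}^{(3)}\cap D_{1,3}^{(3)})=\Pi^{-1}(D_P^{(3)})$ by the case already done. That $\Pi^{-1}(D_P^{(3)})=D_P^{(k)}$ is checked exactly as in Lemma~\ref{GP.212}: functoriality gives $D_P^{(k)}\subseteq\Pi^{-1}(D_P^{(3)})$, both are interior p-submanifolds (Proposition~\ref{GP.128}) of dimension $\mu+(k-3)\kappa$, and $\Pi^{-1}(D_P^{(3)})$ is connected because $\Pi$ is a b-fibration with connected fibres (Definition~\ref{GP.234}) over the connected $D_P^{(3)}$, so the two coincide. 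Transversality is then inherited from $M[3]$: for an interior p-submanifold $X$ the b-tangent space of $\Pi^{-1}(X)$ is $\Pi_*^{-1}(\bT X)$, and $\Pi_*$ is surjective, so along $D_P^{(k)}$ we get $\bT D_{1,2}^{(k)}+\bT D_{1,3}^{(k)}=\Pi_*^{-1}\big(\bT D_{1,2}^{(3)}+\bT D_{1,3}^{(3)}\big)=\Pi_*^{-1}(\bT M[3])=\bT M[k]$.

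The only genuinely nontrivial point is the transversality at \emph{boundary} points of $D_P$ in the case $k=3$: away from the boundary one is merely looking at the ordinary diagonals in a fibre product, where the statement is obvious, and it is exactly at the corners that the defining hypotheses of a generalized product (the $D_{i,j}$ being p-embeddings and the structure maps b-fibrations) are used, through the section and preimage descriptions above. The only other step needing care is the connectedness input, which is what upgrades "$D_P$ is open and closed in $D_{1,2}\cap D_{1,3}$" to the equality $D_{1,2}\cap D_{1,3}=D_P$.
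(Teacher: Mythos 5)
Your proof is correct, and it runs on the same engine as the paper's: exhibit one of the two diagonals as a section of a structural b-fibration and the other as a union of its fibres, so that the fibre directions complement the section while lying in the other diagonal, giving transversality. The execution differs, though. The paper works in $M[k]$ directly, for all $k$ at once, with the single projection $\Pi^k_3:M[k]\longrightarrow M[k-1]$ omitting the index $3$: there $D_{1,3}$ is a section and $D_{1,2}$ is fibre-saturated (the preimage of $D_{1,2}\subset M[k-1]$), and the intersection is then identified in one stroke as the image $D_{1,3}\big(D_{1,2}^{(k-1)}\big)$, which is functorially $D_P$ because the composite of the two collapsing surjections is the surjection collapsing $\{1,2,3\}$; no dimension count or connectedness argument is needed for that identification. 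You instead settle $k=3$ using $\Pi_C$ (with the roles of the two diagonals reversed: $D_{1,2}$ the section, $D_{1,3}=\Pi_C^{-1}(D)$ fibre-saturated via Lemma~\ref{GP.212}), identify the intersection by the open--closed--connected argument in the $\mu$-dimensional manifold $(\Pi_C|_{D_{1,2}})^{-1}(D)$, and then pull everything back along $\Pi_{1,2,3}:M[k]\longrightarrow M[3]$. Both routes are sound and at the paper's level of rigour (the paper itself uses the same ``equal dimension plus connectedness'' device in Lemma~\ref{GP.212}). What your version buys is a reusable observation — the diagonal identities of Lemma~\ref{GP.212} and transversality of interior p-submanifolds pull back along the structural b-fibrations, since $\bT\,\Pi^{-1}(X)=\Pi_*^{-1}(\bT X)$ with $\Pi_*$ surjective — at the cost of an extra appeal to the connected-fibre hypothesis of Definition~\ref{GP.234} to get $\Pi^{-1}(D_P^{(3)})=D_P^{(k)}$; the paper's functorial identification avoids that bookkeeping and is shorter. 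One small caveat on your closing remark: ``away from the boundary one is looking at the diagonals in a fibre product'' is literally true only for stretched products; for a general generalized product (e.g.\ Example~\ref{GP.156}) the interior need not be a fibre product, although the interior case is indeed easier there too. This does not affect your argument, which never uses that description.
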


\noindent The conormal directions here can be thought of as in the dual to the
b-tangent space but since all the manifolds are interior p-submanifolds,
this is the same as the ordinary conormals.

\begin{proof} Consider the b-fibration $\Pi^{k}_{3}:M[k]\longrightarrow
  M[k-1]$ corresponding to the increasing injection $J(k-1)\longrightarrow
  J(k)$ with image omitting $3.$ By functoriality the image
  $\Pi^{k}_{3}(D_{1,2})$ is the corresponding simple diagonal
  $D_{1,2}\subset M[k-1].$ However $\Pi^{k}_{3}$ is a diffeomorphism of
  $D_{1,3}$ to $M[k-1].$ It follows that the intersection is transversal,
  since a fibre of $\Pi^{k}_{3}$ which is contained in $D_{1,2}$ is mapped
  into a point in $D_{1,2}$ so must be transversal to $D_{1,3}$ at the
  intersection. Moreover a point in $D_{1,3}$ is uniquely of the form
  $D_{1,3}(m)$ with $m\in M[k-1]$ and $\Pi^{k}_{3}(D_{1,3}(m))=m.$ Thus if
  $D_{1,3}(m)$ is in the intersection, i.e.\ $D_{1,3}(m)\in D_{1,2},$ it
  follows that $\Pi^{k}_{3}(m)\in D_{1,2}\subset M[k-1].$ The intersection
  is therefore the image of $D_{1,2}\subset M[k-1]$ under $D_{1,3}$ and this
  is functorially identified with the diagonal $D_{P}\subset M[k]$ for
  $P=\{1,2,3\}.$
\end{proof}

\begin{lemma}\label{GP.213} For any $3\le l\le k$ the intersection 
\begin{equation}
D_{1,2}\cap D_{2,3}\cap\dots\cap D_{l-1,l}\Min M[k]
\label{GP.214}\end{equation}
is transversal and equal to the diagonal corresponding to the single subset
$P=\{1,\dots,l\}\subset J(k).$ 
\end{lemma}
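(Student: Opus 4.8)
The plan is to induct on $l$, with $k\ge l$ fixed. For $l=3$ the statement is Lemma~\ref{GP.201} transported by a structural diffeomorphism: the transposition $\sigma=(1\,2)$ of $J(k)$ induces a diffeomorphism $S_\sigma$ of $M[k]$ with $S_\sigma(D_{1,2})=D_{1,2}$, $S_\sigma(D_{1,3})=D_{2,3}$ and $S_\sigma(D_P)=D_P$ for $P=\{1,2,3\}$, so $D_{1,2}\cap D_{2,3}=S_\sigma(D_{1,2}\cap D_{1,3})=D_P$, transversally since $S_\sigma$ is a diffeomorphism. (Equivalently, one may take the base case to be the trivial case $l=2$ and apply the step below with $\mathfrak{P}'=\{1,2\}$.)

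For the inductive step I would let $\mathfrak{P}'$ be the partition of $J(k)$ with single non-trivial block $P'=\{1,\dots,l-1\}$. By the inductive hypothesis $\bigcap_{i=1}^{l-2}D_{i,i+1}=D_{\mathfrak{P}'}$, a transversal intersection, so $\bigcap_{i=1}^{l-1}D_{i,i+1}=D_{\mathfrak{P}'}\cap D_{l-1,l}$ and it suffices to show $D_{\mathfrak{P}'}\cap D_{l-1,l}=D_{\mathfrak{P}}$ transversally, $\mathfrak{P}$ having single block $\{1,\dots,l\}$. The idea is to rerun the proof of Lemma~\ref{GP.201} with $D_{1,2},D_{1,3},\Pi^k_3$ replaced by $D_{\mathfrak{P}'},D_{l-1,l}$ and the structural b-fibration $\Pi=\Pi^k_l:M[k]\longrightarrow M[k-1]$ coming from the increasing injection $J(k-1)\hookrightarrow J(k)$ omitting $l$. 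Two functorial computations do the bookkeeping: the composite $\Pi\circ D_{l-1,l}$ corresponds to the identity of $J(k-1)$, so $\Pi$ restricts to a diffeomorphism of $D_{l-1,l}$ onto $M[k-1]$ with inverse $D_{l-1,l}$; and $\Pi\circ D_{\mathfrak{P}'}$ has image the diagonal $D_{\mathfrak{P}'}^{(k-1)}\subset M[k-1]$ associated with the block $\{1,\dots,l-1\}$ in $J(k-1)$, so $\Pi(D_{\mathfrak{P}'}^{(k)})\subset D_{\mathfrak{P}'}^{(k-1)}$.

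From the second fact, $\Pi^{-1}(D_{\mathfrak{P}'}^{(k-1)})$ is an interior p-submanifold of $M[k]$ by Proposition~\ref{GP.128}, of dimension $\mu+(k-l+1)\kappa=\dim D_{\mathfrak{P}'}^{(k)}$; so $D_{\mathfrak{P}'}^{(k)}$ is a component of it, and since the fibres of $\Pi$ and the base $D_{\mathfrak{P}'}^{(k-1)}$ are connected the preimage is connected, forcing $D_{\mathfrak{P}'}^{(k)}=\Pi^{-1}(D_{\mathfrak{P}'}^{(k-1)})$ --- exactly the step used in Lemmas~\ref{GP.212} and \ref{GP.201}. Now, using the first fact, a point of $D_{l-1,l}$ is uniquely $D_{l-1,l}(m)$ with $\Pi(D_{l-1,l}(m))=m$, and it lies in $D_{\mathfrak{P}'}^{(k)}=\Pi^{-1}(D_{\mathfrak{P}'}^{(k-1)})$ precisely when $m\in D_{\mathfrak{P}'}^{(k-1)}$; hence $D_{\mathfrak{P}'}\cap D_{l-1,l}=D_{l-1,l}(D_{\mathfrak{P}'}^{(k-1)})$, and since the composite $D_{l-1,l}\circ D_{\mathfrak{P}'}^{(k-1)}$ is the structure map for the surjection of $J(k)$ collapsing $\{1,\dots,l\}$ (the join of $\{1,\dots,l-1\}$ and $\{l-1,l\}$), this equals $D_{\mathfrak{P}}^{(k)}$. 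For transversality: $T_pD_{\mathfrak{P}'}^{(k)}\supset\ker d\Pi_p$ since $D_{\mathfrak{P}'}^{(k)}$ is a $\Pi$-preimage, while $T_pD_{l-1,l}$ is a complement to $\ker d\Pi_p$ because $\Pi|_{D_{l-1,l}}$ is a diffeomorphism, so $T_pD_{\mathfrak{P}'}^{(k)}+T_pD_{l-1,l}=T_pM[k]$; combined with the inductive hypothesis (the conormals of $D_{1,2},\dots,D_{l-2,l-1}$ are independent along $D_{\mathfrak{P}'}$) the conormals of $D_{1,2},\dots,D_{l-1,l}$ are independent along $D_{\mathfrak{P}}$, which is the asserted transversality.

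The real content, and the only place needing thought, is recognizing that projecting off the single index $l$ is the right reduction: it makes $D_{l-1,l}$ a section of $\Pi$ and turns the accumulated diagonal $D_{\mathfrak{P}'}$ into a full $\Pi$-preimage, so the $l$-fold intersection collapses to the two-submanifold situation already treated in Lemma~\ref{GP.201}. Once that is in place the p-submanifold, dimension and connectivity claims are routine in the style of Lemmas~\ref{GP.212} and \ref{GP.201}, and the identification of the outcome with $D_{\mathfrak{P}}$ is a short combinatorial check on partitions.
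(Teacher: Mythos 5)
Your proof is correct and follows essentially the same route as the paper: induction on $l$, using a structural b-fibration that projects off a single index so that one simple diagonal becomes a section while the accumulated diagonal is identified, by the dimension-plus-connected-fibres argument of Lemma~\ref{GP.212}, as the full preimage of the corresponding diagonal one level down, from which both the identification with $D_{P}$ and the transversality follow. The only difference is bookkeeping \mhy\ you project off the index $l$ and apply the inductive hypothesis in $M[k]$ itself, whereas the paper projects off an end index and invokes the hypothesis in $M[k-1]$ \mhy\ so the mechanism is the same.
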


\begin{proof} Lemma~\ref{GP.201} is this statement for $l=3.$ We proceed
  iteratively in $l$ assuming the result for $l<L$ and all $k.$ Since the
  structural diffeomorphisms can be used to shift the indices we can assume
  that transversality holds for 
\begin{equation*}
D_{1,2}\cap\dots\cap D_{L-2,L-1}\Min M[k-1]
\label{GP.215}\end{equation*}
with the intersection identified with $D_{P},$ $P=\{1,\dots,L-1\}.$ The
b-fibration $\Pi_{1}^{k}:M[k]\longrightarrow M[k-1],$ corresponding to the
increasing injective map from $J(k-1)$ to $J(k)$ omitting $1,$ maps
$D_{1,2}$ onto $M[k-1]$ and each $D_{j,j+1}$ for $j>1,$ onto
$D_{j-1,j}\subset M[k-1].$ From this we deduce the inclusion of the left in
the right in
\begin{equation}
D_{2,3}\cap\dots\cap
D_{L-1,L}=\Pi^{k}_{L}\left (D_{1,2}\cap\dots\cap D_{L-2,L-1}\right)\Min M[k].
\label{GP.216}\end{equation}
Equality follows from the equality of dimensions \eqref{GP.202}. Since the
fibres of $\Pi^{k}_{L}$ at a point of the intersection \eqref{GP.214}
contained in $D_{1,2}$ cannot be tangent to $D_{2,3}\cap\dots\cap
D_{l-1,l}$ transversality follows, as does the identification of the intersection.
\end{proof}

\begin{theorem}\label{GP.217} The diagonals in a compact (connected)
  generalized product form a p-clean collection closed under
  intersection. The `true diagonals' $D_{P}$ for single subsets
  $P\subset J(k)$ form a p-clean collection closed under
  non-transversal intersection.
\end{theorem}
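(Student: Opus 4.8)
The plan is to treat the combinatorial content (closure under intersection, and which intersections of true diagonals are again true diagonals) first, and the geometric content of p-cleanness second; in both parts everything reduces to the \emph{simple} diagonals $D_{i,j}$.

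For the combinatorics, I would first record that, by Lemma~\ref{GP.213} together with the structural diffeomorphisms (which move any block of a partition to a block of consecutive integers), every true diagonal $D_{\mathfrak{P}}$ for a single subset $P=\{i_1<\dots<i_m\}$ is the transversal intersection $D_{i_1,i_2}\cap D_{i_2,i_3}\cap\dots\cap D_{i_{m-1},i_m}$, and more generally $D_{\mathfrak{P}}=\bigcap_{i\sim_{\mathfrak{P}}j}D_{i,j}$, the intersection of $\sum_l(\abs{P_l}-1)$ simple diagonals and transversal because distinct blocks carry disjoint index sets (this last point is Lemma~\ref{GP.213} run across all blocks at once). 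The extra input needed is $D_{i,j}\cap D_{j,l}\subseteq D_{i,l}$, which follows from Lemma~\ref{GP.201} and \eqref{GP.208}; it implies that the intersection of any finite family of $D_{i,j}$ depends only on the equivalence relation the index pairs generate, since a chain of pairs realizes $a\sim b$ and $\bigcap D_{c_u,c_{u+1}}\subseteq D_{a,b}$. Hence $D_{\mathfrak{P}}\cap D_{\mathfrak{Q}}=D_{\mathfrak{P}\vee\mathfrak{Q}}$, where the join is the finest partition coarsening both, which is exactly closure under intersection. For the true diagonals one then reads off: for single subsets $P,P'$, $D_{\mathfrak{P}}\cap D_{\mathfrak{Q}}$ is the true diagonal $D_{P\cup P'}$ when $P\cap P'\neq\emptyset$ and the two-block diagonal $D_{\{P,P'\}}$ (not a true diagonal) when $P\cap P'=\emptyset$; since $\codim D_{P\cup P'}=\kappa(\abs{P\cup P'}-1)$ while $\codim D_{P}+\codim D_{P'}=\kappa(\abs{P}+\abs{P'}-2)$, the intersection is non-transversal precisely when $\abs{P\cap P'}\ge2$, and in the disjoint case it is transversal (disjoint simple diagonals meet transversally, again by the method of Lemma~\ref{GP.213}). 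So the true diagonals are closed under non-transversal intersection, as asserted.

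It remains to prove p-cleanness. Fix $p\in M[k]$. By the closure just established the set of partitions $\mathfrak{Q}$ with $p\in D_{\mathfrak{Q}}$ is closed under joins, hence has a maximum $\mathfrak{P}=\mathfrak{P}(p)$, and the diagonals through $p$ are exactly the $D_{\mathfrak{Q}}$ with $\mathfrak{Q}\le\mathfrak{P}$; as each such is an intersection of simple diagonals $D_{i,j}$ with $i\sim_{\mathfrak{P}}j$, it suffices to linearize those simple diagonals simultaneously near $p$. The simple diagonals joining consecutive elements of the blocks of $\mathfrak{P}$ form a transversal family with intersection $D_{\mathfrak{P}}$, so there are coordinates $(w^{(1)},\dots,w^{(r)},y)$ near $p$, with $r=\sum_l(\abs{P_l}-1)$ and each $w^{(a)}\in\bbR^{\kappa}$, in which each of these `chain' diagonals is a coordinate subspace $\{w^{(a)}=0\}$ and $D_{\mathfrak{P}}=\{w=0\}$. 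Any remaining simple diagonal $D_{i,j}=\Pi_{i,j}^{-1}(D)$ (Lemma~\ref{GP.212}) is a $\kappa$-codimensional interior p-submanifold (Proposition~\ref{GP.128}) containing the coordinate subspace cut out by the chain diagonals lying between $i$ and $j$, so a $\kappa$-tuple of defining functions can be written $F_{i,j}=\sum_a H^{(a)}(w,y)\,w^{(a)}$, the sum over those chains. Using transversality of $D_{i,j}$ to the flanking chain diagonals one finds the matrices $H^{(a)}(p)$ invertible, and then the change of coordinates $w^{(a)}\rightsquigarrow H^{(a)}(w,y)\,w^{(a)}$ leaves the chain diagonals coordinate subspaces while turning $D_{i,j}$ into the linear subspace $\{\sum_a\tilde w^{(a)}=0\}$. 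Performing this consistently for all the remaining simple diagonals — i.e. bringing the whole local configuration of diagonals near $p$ into the standard normal form it has for an honest fibre product (Example~\ref{GP.294}) — completes the proof.

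The step I expect to be the main obstacle is this last one. Lemmas~\ref{GP.201} and \ref{GP.213} supply only the pointwise, pairwise transversality of the diagonals, and the real work is to upgrade this to a single coordinate system simultaneously rectifying \emph{all} the diagonals through $p$. When $\mathfrak{P}(p)$ has a single non-trivial block of size at most three the argument above goes through unconditionally — for a triple diagonal, transversality of $D_{i,j}$ to the two flanking chain diagonals forces $H^{(i)}(p)$ and $H^{(j-1)}(p)$ to be invertible, and the two substitutions are manifestly compatible — but for larger blocks transversality to the chain diagonals no longer by itself forces the $H^{(a)}(p)$ invertible, and one must feed in the remaining transversalities among the simple diagonals and induct on block size: passing to $M[k-1]$ by a structural b-fibration, using its local product structure to dispose of the diagonals not involving the last index, and then straightening the new ones. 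The facts that make this work are the functorial relations among the $\Pi_{i,j}$ and, crucially, that every simple diagonal is the preimage of the \emph{one} p-submanifold $D\subset M[2]$, so that Proposition~\ref{GP.128} keeps all the relevant sets p-submanifolds at each stage.
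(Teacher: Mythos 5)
Your combinatorial half is essentially right and close to what the paper leaves implicit: reduction to the simple diagonals via \eqref{GP.208}, \eqref{GP.210} and Lemmas~\ref{GP.201}, \ref{GP.213}, the identification $D_{\mathfrak{P}}\cap D_{\mathfrak{Q}}=D_{\mathfrak{P}\vee\mathfrak{Q}}$, and the codimension count showing that non-transversal intersections of true diagonals are again true diagonals. The gap is in the geometric heart of the theorem, p-cleanness, and it sits exactly where you flagged it. Your argument needs a \emph{single} local chart in which all simple diagonals through the point are simultaneously linear, and you try to get it by Hadamard-type substitutions $w^{(a)}\rightsquigarrow H^{(a)}(w,y)\,w^{(a)}$, one for each non-chain diagonal $D_{i,j}$. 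But different $D_{i,j}$ demand different substitutions, nothing in Lemmas~\ref{GP.201} and \ref{GP.213} makes the matrices $H^{(a)}(p)$ invertible once a block has more than three elements, and no mechanism is given for making the substitutions mutually compatible. The fallback you sketch -- induction on block size, passing to $M[k-1]$ by a structural b-fibration and ``straightening the new ones'' -- reproduces the same unproved step one level down: after pulling back a linearizing chart (which, via Lemma~\ref{GP.212} and Lemma~\ref{GP.350}, handles the diagonals in which $k$ is a singleton) you must still rectify all the $D_{i,k}$, $i<k$, simultaneously and compatibly with the pulled-back coordinates. So the proposal, as written, does not prove p-cleanness.

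The missing idea in the paper is to use the reflection symmetries instead of any normal-form induction. Near a point of $D_{P}$ with $P=\{1,\dots,l\}$ one chooses $\kappa$ functions $z$ on $M[2],$ each odd under $R,$ with $D=\{z=0\}$ locally, and sets $z_j=\Pi_{1,j}^*z,$ $j=2,\dots,l.$ Lemma~\ref{GP.213} gives independence of the differentials, so $D_{1,j}=\{z_j=0\}$ locally; and the structural identity $\Pi_{1,i}R_{i,r}=\Pi_{1,r}$ shows that $z_i-z_r$ is odd under $R_{i,r},$ hence vanishes on $D_{i,r},$ and a codimension count forces $D_{i,r}=\{z_i-z_r=0\}$ locally. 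Thus \emph{every} simple diagonal containing $D_{P}$ is linear in the one collection of functions $z_2,\dots,z_l$ -- precisely the simultaneous rectification your substitutions could not deliver -- and the multi-block case then follows from transversality across blocks, with the defining functions constructed independently block by block. To repair your proof, replace the coordinate-by-coordinate straightening with this pullback-of-odd-defining-functions device (or supply an equivalent simultaneous normal form); the rest of your argument can then stand.
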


\begin{proof} Consider a diagonal $D_{P}$ in $M[k].$ Using the structural
  diffeomorphisms it suffices to assume that $P=\{1,2,\dots,l\},$ $2\le l\le
  k.$ Then $D_{P}\subset D_{i,j}$ if $\{i,j\}\subset P$ and conversely
  the diagonals containing $D_{P}$ are those corresponding to
  partitions contained in $P.$

We know that the b-fibration $\Pi_{1,j}:M[k]\longrightarrow M[2],$ $2\le
j\le l,$ restricts to a b-fibration of $D_{1,j}$ over $D$ and that 
\begin{equation}
D_{P}=D_{1,2}\cap D_{1,3}\dots\cap D_{1,l}\text{ is transversal.}
\label{GP.220}\end{equation}
Moreover the image of a point in $D_{P}$ under each of the $\Pi_{1,j},$
$2\le j\le l,$ is the same. Since, by functoriality,
$\Pi_{1,j}R_{1,j}=R\Pi_{1,j}$ and (hence) $D$ is a component of the
fixed point set of $R$ near a point in $D.$ So we may choose $\kappa$
independent functions, denoted collectively $z,$ each odd under $R,$ such that
$D=\{z=0\}$ locally. Then the functions
\begin{equation}
z_j=(\Pi_{1,j})^*z,\ j=2,\dots,l\Msatisfy
R_{1,j}^*z_j=-z_j,\ D_{1,j}=\{z_j=0\}\text{ locally.}
\label{GP.218}\end{equation}
By Lemma~\ref{GP.213} their differentials must (all) be independent. 

Now consider the other $D_{i,r}$ where $1, i<r\le l.$ The structural identity
\begin{equation}
\Pi _{1,i}R_{i,r}= \Pi _{1,r}\Longrightarrow (R_{ir})^*(z_i-z_r)=z_r-z_i.
\label{GP.219}\end{equation}
Since the components of $z_r-z_i$ are linearly independent it follows that
\begin{equation}
D_{i,r}=\{z_i-z_r=0\}\text{ locally.}
\label{GP.221}\end{equation}

Thus all the diagonals containing one $D_{P}$ for
  $P\subset J(k)$ form a p-clean collection. The general case
  follows from this since if $\mathfrak{P}$ consists of several disjoint sets
  $P_j$ then  
\begin{equation}
D_{\mathfrak{P}}=\bigcap_{i}D_{P_i}
\label{GP.222}\end{equation}
is transversal and the local linear defining functions \eqref{GP.221} can be
constructed independently.
\end{proof}

Let $\cD(k)$ be the collection of multidiagonals in $M[k]$ for a
generalized product. Thus $D_{\FP}\in\cD(k)$ corresponds to a partition
$\FP$ of $J(k).$ Then for $k>1$  
\begin{equation}
\cD(k)=\cF\sqcup\cG
\label{GP.349}\end{equation}
where $\cF$ corresponds to the partitions in which $k$ is a singleton. Thus
$D_{\FP}\in\cD(k)$ determines a unique $D_{\FP'}\in\cD(k-1)$ where $\FP'$
is the partition of $J(k-1)$ with $k$ dropped; conversely $D_{\FP'}$
determines $D_{\FP}.$ For $D_{\FQ}\in\cG$ there is a corresponding
$D_{\FQ'}$ obtained by dropping $k$ from the partition, but in general
$D_{\FQ'}$ does not determine $D_{\FQ}.$

\begin{lemma}\label{GP.350} For any generalized product, if
  $\Pi_k:M[k]\longrightarrow M[k-1]$ is the b-fibration corresponding to
  $I:J(k-1)\longrightarrow J(k),$ $I(i)=i,$ then in terms of \eqref{GP.222}  
\begin{equation}%
\begin{gathered}
\Pi_k^{-1}(D_{\FP'})=D_{\FP},\ D_{\FP}\in\cF,\\ \Pi_k:D_{\FP}\longrightarrow
D_{\FP'}\text{ is a b-fibration and
}\\
\Pi_k:D_{\FQ}\longrightarrow D_{\FQ'}\text{ is a diffeomorphism.}
\end{gathered}
\label{GP.351}\end{equation}
\end{lemma}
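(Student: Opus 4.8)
The proof breaks into the three assertions of \eqref{GP.351}, and the natural order is: first the inverse-image identity $\Pi_k^{-1}(D_{\FP'})=D_{\FP}$ for $D_{\FP}\in\cF$, then the two statements about the restricted maps. The key observation throughout is functoriality: the partition $\FP$ of $J(k-1)$ (with $k$ adjoined as a singleton) is carried to $\FP$ under $I:J(k-1)\to J(k)$ in the precise combinatorial sense that $I_{\FP'}\circ I$ factors as $I_{\FP}$ followed by the corresponding inclusion $J(k_{\FP'})\hookrightarrow J(k_{\FP})$; so the structure maps satisfy the identity $S_{I_{\FP}}=S_{I}\circ(\text{something})$ after the obvious relabelling. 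Unwinding this gives $\Pi_k\circ D_{\FP}=D_{\FP'}\circ(\text{the map }J(k_{\FP})\to J(k_{\FP'})\text{ dropping the last singleton})$, hence as submanifolds $\Pi_k(D_{\FP})\subset D_{\FP'}$, so $D_{\FP}\subset\Pi_k^{-1}(D_{\FP'})$.

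\textbf{Step 2 (the inverse image).} For the reverse inclusion I would use the dimension count exactly as in the proof of Lemma~\ref{GP.212}: by Proposition~\ref{GP.128} the inverse image $\Pi_k^{-1}(D_{\FP'})$ of an interior p-submanifold under a b-fibration is again an interior p-submanifold, and since $\Pi_k$ is a b-fibration with fibre dimension $\kappa$ (as $\dim M[k]-\dim M[k-1]=\kappa$), one gets $\dim\Pi_k^{-1}(D_{\FP'})=\dim D_{\FP'}+\kappa$. On the other hand, since $k$ is a singleton of $\FP$, we have $k_{\FP}=k_{\FP'}+1$, so $\dim D_{\FP}=\dim M[k_{\FP}]=\mu+(k_{\FP'})\kappa=\dim M[k_{\FP'}]+\kappa=\dim D_{\FP'}+\kappa$. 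Thus $D_{\FP}$ and $\Pi_k^{-1}(D_{\FP'})$ have the same dimension, so $D_{\FP}$ is a union of components of $\Pi_k^{-1}(D_{\FP'})$; connectedness of $D_{\FP}$ together with connectedness of the fibres of the b-fibration $\Pi_k$ (part of the compactness/connectedness hypotheses, cf. Definition~\ref{GP.234}) forces equality, since $\Pi_k^{-1}(D_{\FP'})$ is then connected — its image $D_{\FP'}$ is connected and the fibres over it are connected. This is the step I expect to require the most care: one must check that $\Pi_k^{-1}(D_{\FP'})$ is genuinely connected, which is where the connected-fibre assumption is essential (the same subtlety occurs in Lemma~\ref{GP.212}).

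\textbf{Step 3 (the restricted maps).} For the claim that $\Pi_k:D_{\FP}\to D_{\FP'}$ is a b-fibration: restricting a b-fibration to the inverse image of a p-submanifold is again a b-fibration onto that p-submanifold — this follows from the local normal forms of a b-fibration \eqref{SCL.51} and of an interior p-submanifold (the p-submanifold $D_{\FP'}$ is cut out by interior coordinates which pull back to interior coordinates cutting out $D_{\FP}$, and the product structure of the b-fibration is respected). So once Step 2 identifies $D_{\FP}=\Pi_k^{-1}(D_{\FP'})$, this is immediate. For the last claim, that $\Pi_k:D_{\FQ}\to D_{\FQ'}$ is a diffeomorphism when $D_{\FQ}\in\cG$ (so $k$ lies in a non-trivial block of $\FQ$): here the point is that $D_{\FQ}$, being the image of the p-embedding $S_{I_{\FQ}}:M[k_\FQ]\to M[k]$, comes with an identification $D_{\FQ}\cong M[k_\FQ]$, and likewise $D_{\FQ'}\cong M[k_{\FQ'}]$; but now $k_\FQ=k_{\FQ'}$ because $k$ was already in a block of size $\ge2$, so dropping $k$ does not change the number of blocks. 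Under these identifications the composite $M[k_\FQ]\xrightarrow{S_{I_\FQ}}M[k]\xrightarrow{\Pi_k}M[k-1]$ equals $S_{I'}$ for the relevant map $I':J(k-1)\to J(k_\FQ)$, and by functoriality $S_{I'}$ is precisely the structure map exhibiting $D_{\FQ'}$, i.e. a p-embedding whose source is $M[k_{\FQ'}]=M[k_\FQ]$ and which is the identity under the identifications. Hence $\Pi_k$ carries $D_{\FQ}$ bijectively and smoothly onto $D_{\FQ'}$, with smooth inverse given (again via functoriality) by $D_{\FQ}\circ\text{(relabelling)}$; so it is a diffeomorphism. The only thing to verify carefully in this last part is the bookkeeping of which surjection $J(k-1)\to J(k_{\FQ'})$ is induced — i.e. that $I_{\FQ}$ restricted along $I(i)=i$ really is $I_{\FQ'}$ up to the canonical relabelling — but this is a purely combinatorial check on partitions.
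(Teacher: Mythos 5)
Your proposal is correct and takes essentially the route the paper intends: the paper's proof just cites ``the corresponding properties for diagonals discussed above'', i.e.\ the dimension-count plus connected-fibre argument of Lemma~\ref{GP.212} for $\Pi_k^{-1}(D_{\FP'})=D_{\FP}$ and the functorial identification (as in Lemma~\ref{GP.201}) of $\Pi_k\big|_{D_{\FQ}}$ with $S_{I_{\FQ}\circ I}\circ S_{I_{\FQ}}^{-1}$, using $k_{\FQ}=k_{\FQ'},$ for the diffeomorphism statement \mhy\ exactly what you spell out. Only a notational slip in your plan: the combinatorial factorization should read $I_{\FP}\circ I=(\text{inclusion})\circ I_{\FP'},$ giving $\Pi_k\circ D_{\FP}=D_{\FP'}\circ \Pi$ with $\Pi:M[k_{\FP}]\longrightarrow M[k_{\FP'}]$ the structural b-fibration (the induced map goes between the model spaces, not between the index sets as written).
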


\begin{proof} This follows from the corresponding properties for diagonals
  discussed above.
\end{proof}

\section{The Lie algebroid}\label{LA}

As noted above a generalized product is associated to a Lie algebroid,
which it resolves. Since it is a b-fibration the null spaces of the
differentials $(\Pi_R)_*\subset\bT M[2]$ form a vector bundle.

\begin{definition}\label{GP.300} The vector bundle $E$ over $M[1]$ is
  defined as the (abstract) pull-back
\begin{equation}
E=D^*\nul((\Pi_R)_*).
\label{GP.302}\end{equation}
\end{definition}

\begin{theorem}\label{GP.304} The space $\cE$ of smooth sections of $E$
  over $M[1]$ is a Lie algebroid with anchor map
\begin{equation}
(\Pi_L)_*:\cE\longrightarrow \CI(M[1];\bT).
\label{GP.299}\end{equation}
\end{theorem}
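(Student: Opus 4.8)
The plan is to transport to a general generalized product the ``simplicial'' construction of the bracket given for a fibre bundle in Example~\ref{GP.294}; the substitute for the local trivialisation used there is the fibre-product normal form that Theorem~\ref{GP.217} (more precisely, the coordinates built in its proof) provides near the diagonal, and since only germs near $D$ enter $[\tilde V,\tilde V']\big|_D$ and the axioms below, it suffices to argue there. Two facts are formal and hold for any generalized product: the null bundle $\nul((\Pi_R)_*)\subset\bT M[2]$ of the b-submersion $\Pi_R$ has sections closing under the bracket of $\bV(M[2])$ (a bracket of sections each $\Pi_R$-related to $0$ is again $\Pi_R$-related to $0$), and likewise for $\nul((\Pi_F)_*)\subset\bT M[3]$; and the functorial identities $\Pi_R\circ\Pi_S=\Pi_L\circ\Pi_F$, $\Pi_L\circ\Pi_C=\Pi_L\circ\Pi_S$ and $\Pi_C\circ D_{1,2}^{(3)}=\operatorname{id}_{M[2]}$ hold. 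The first identity shows $(\Pi_S)_*$ maps $\nul((\Pi_F)_*)$ into the pull-back $\Pi_S^{*}\nul((\Pi_R)_*)$; both are bundles of rank $\kappa$, and the content of the construction is that this (and the analogous map $(\Pi_C)_*$ over $D_{1,2}$) is an isomorphism, which one reads off the normal form exactly as in Example~\ref{GP.294}. Combined with $\Pi_C\circ D_{1,2}^{(3)}=\operatorname{id}$ (so $\Pi_C|_{D_{1,2}}$ is a diffeomorphism onto $M[2]$) and Lemma~\ref{GP.350} ($\Pi_S|_{D_{1,2}}\colon D_{1,2}\to D$ a b-fibration), this lets me define, as in Example~\ref{GP.294}, the \emph{extension} $\tilde V$ of $V\in\cE$: view $V$ as a section of $\nul((\Pi_R)_*)$ over $D$, lift it through $(\Pi_S)_*$ to $W\in\CI(D_{1,2};\nul((\Pi_F)_*))$, and set $\tilde V=(\Pi_C)_{*}W\in\CI(M[2];\nul((\Pi_R)_*))$. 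The normal form then yields $\tilde V\big|_D=V$, $\widetilde{fV}=(\Pi_L^{*}f)\,\tilde V$ for $f\in\CI(M[1])$, and the fact that $V\mapsto\tilde V$ is a bijection onto its image.

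I would then set $[V,V']:=[\tilde V,\tilde V']\big|_D$ (the bracket on the right in $\bV(M[2])$; it lands in $\nul((\Pi_R)_*)$ by the first formal fact) and $a(V):=(\Pi_L)_{*}V\in\CI(M[1];\bT M[1])$, which makes sense since $\Pi_L|_D=\operatorname{id}_{M[1]}$. The one auxiliary point, again purely formal, is that $\tilde V$ is $\Pi_L$-related to $a(V)$: indeed $(\Pi_L)_{*}\tilde V=(\Pi_L\circ\Pi_C)_{*}W=(\Pi_L\circ\Pi_S)_{*}W$, and $(\Pi_S)_{*}W$, being the pull-back of $V$ along $\Pi_S|_{D_{1,2}}$, is constant along the fibres of $\Pi_L\circ\Pi_S$, so its $(\Pi_L)_{*}$-image descends to $M[1]$ and equals $a(V)$. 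Antisymmetry and $\RR$-bilinearity of $[\,\cdot\,,\,\cdot\,]$ are then immediate; the Leibniz rule follows from $\widetilde{fV'}=(\Pi_L^{*}f)\tilde V'$, the Leibniz rule in $\bV(M[2])$, and $\tilde V(\Pi_L^{*}f)=\Pi_L^{*}(a(V)f)$, giving $[V,fV']=f[V,V']+(a(V)f)V'$; and $a$ is a bracket homomorphism because $[\tilde V,\tilde V']$ is $\Pi_L$-related to $[a(V),a(V')]$, whose restriction over $D$ is $a([V,V'])=[a(V),a(V')]$.

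The Jacobi identity is the only point that is not automatic. It follows once one knows that the bracket of two extensions is again an extension, for then $[\tilde V,\tilde V']=\widetilde{[V,V']}$ and Jacobi on $\cE$ is the restriction over $D$ of the Jacobi identity in $\bV(M[2])$. I would establish this closure with a second use of $M[3]$, imitating the passage from right-invariant vector fields on a Lie groupoid to the bracket on its Lie algebroid (with $M[2]$, $\Pi_R$, $\Pi_L$, $D$, $M[3]$ in the roles of arrows, source, target, units and composable pairs): lift an extension $U$ to the unique section $\widehat U$ of $\nul((\Pi_F)_*)$ (near $D_P\subset M[3]$) that is $\Pi_S$-related to $U$, and check that $U$ is an extension exactly when $\widehat U$ is moreover $\Pi_C$-related to $U$. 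Then for two extensions $U,U'$ the bracket $[\widehat U,\widehat{U'}]$ is a section of $\nul((\Pi_F)_*)$ by the first formal fact, is $\Pi_S$-related to $[U,U']$ and hence equals $\widehat{[U,U']}$ by uniqueness of the lift, and is also $\Pi_C$-related to $[U,U']$; so $[U,U']$ is an extension. The main obstacle throughout --- entering in exactly the same places --- is the assertion that these b-differentials restrict to \emph{isomorphisms} between the null bundles concerned, equivalently that pairs such as $\nul((\Pi_S)_*)$ and $\nul((\Pi_F)_*)$ in $\bT M[3]$ meet only in the zero section. I expect this to follow from the p-clean transversality of the diagonals supplied by Theorem~\ref{GP.217} (with Proposition~\ref{GP.128} to identify preimages such as $\Pi_C^{-1}(D)=D_{1,3}$ used along the way), and near $D$ it is simply visible in the fibre-product model.
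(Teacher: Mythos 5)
Your construction is the paper's own: the extension $V\mapsto\tilde V$ via the $\Pi_S$-lift $W\in\CI(D_{1,2};\nul((\Pi_F)_*))$ pushed forward by the diffeomorphism $\Pi_C|_{D_{1,2}}$, the bracket defined by restricting $[\tilde V,\tilde V']$ to $D$, Jacobi deduced from closure of the space of extensions under bracket by comparing related lifts on $M[3]$ near the triple diagonal, and the anchor $(\Pi_L)_*$; your characterization of extensions (the $\Pi_S$-lift being in addition $\Pi_C$-related) is the mirror image of the paper's use of the $\Pi_C$-lifts $\tilde W_i$, and your explicit verifications of the Leibniz rule, $\widetilde{fV}=(\Pi_L^*f)\tilde V$, and anchor compatibility only spell out what the paper states ``follows directly''. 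So this is essentially the same proof, carried out correctly and to at least the paper's level of detail.
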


\begin{proof} That \eqref{GP.299} is a smooth bundle map follows from the
  fact that $\Pi_L$ is a b-fibration.

The main part of the proof is to see that $\cE$ has a natural Lie algebra
structure. To do so we follow, more abstractly, the discussion in
Examples~\ref{GP.294} and \ref{GP.156} above. Take an element $V\in\cE,$
i.e.\ a smooth section of $E$ over $M[1]$ or equivalently a smooth section
of $\nul((\Pi_R)_*)$ over the diagonal $D\subset M[2].$

The projection $\Pi_S$ restricts to a b-fibration of $D_{1,2}$ onto $D.$
Moreover the identity  
\begin{equation}
\Pi_R\Pi_S=\Pi_L\Pi_F\Longrightarrow
(\Pi_S)_*:\nul((\Pi_F)_*)\longrightarrow \nul((\Pi_R)_*).
\label{GP.305}\end{equation}
At $D_{1,2}$ the null space of $(\Pi_S)_*$ is tangent to $D_{1,2}$ whereas
$\nul((\Pi_F)_*)$ is transversal to $D_{1,2}.$ Since the dimensions are the
same the push-forward map in \eqref{GP.305} is therefore an isomorphism at
each point. It follows that $V$ lifts to a unique smooth section 
\begin{equation}
W\in\CI(D_{1,2};\nul((\Pi_F)_*))\text{ satisfying }(\Pi_S)_*W(p)=V(\Pi_S(p))
\label{GP.306}\end{equation}
at each point. 

Now, the projection $\Pi_C$ restricts to a diffeomorphism of $D_{1,2}$ onto
$M[2].$ Moreover, the identity 
\begin{equation*}
\Pi_R\Pi_F=\Pi_R\Pi_C
\label{GP.316}\end{equation*}
shows that the differential of $\Pi_C$ maps $\nul((\Pi_F)_*)$ into
$\nul((\Pi_R)_*)$ at each point of $D_{1,2}.$ Thus $W$ defined by
\eqref{GP.306} pushes forward to a smooth global section
\begin{equation}
\tilde V=(\Pi_C)_*W\in\CI(M[2];\nul((\Pi_R)_*)).
\label{GP.307}\end{equation}

This global section restricts to $D$ to be the original section $V.$
Indeed, on the triple diagonal, $T,$ $\Pi_S=\Pi_F=\Pi_C$ are all the
identity in terms of the embeddings of $M[1]$ as $T$ and $D.$ Also
\emph{at} the triple diagonal, $\nul((\Pi_F)_*)\subset TD_{2,3}.$ Now
$\Pi_CR_{2,3}=\Pi_S$ and $R_{2,3}$ acts as the identity on $D_{2,3}$ so it
follows that $(R_{2,3})_*W=W$ and hence $(\Pi_C)_*W=(\Pi_S)_*W=V$ at $D.$

The extension of $V$ to $\tilde V$ also gives an extension of $W$ to a
section $\tilde W\in\CI(\Omega ;\nul((\Pi_F)_*)$ at least for a small open
neighbourhood of the triple diagonal, $T.$ Indeed, in such a neighourhood
$(\Pi_C)_*$ maps $\nul((\Pi)_F)_*)$ isomorphically onto $\nul((\Pi_R)_*)$ so
$\tilde W$ is well-defined pointwise by
\begin{equation}
\tilde V=(\Pi_C)_*\tilde W\text{ near }T.
\label{GP.308}\end{equation}
Then $\tilde V$ and $\tilde W$ are $\Pi_C$-related smooth vector fields. 
The extended vector fields $\tilde V$ are tangent to the fibres of $\Pi_R$
so given two vector fields $V_1,$ $V_2\in\cE$ the commutators of the
extensions $[\tilde V_1,\tilde V_2]$ and $[\tilde W_1,\tilde W_2]$ are
$\Pi_C$-related as well. From this it follows that the lift of the
commutator defined on $\cE$ by restriction is the commutator of the
lifts. In particular, the Jacobi identity holds.

It also follows directly that the anchor map \eqref{GP.299} maps
commutators to commutators so $\cE$ does indeed have the structure of a Lie
algebroid.
\end{proof}

It is straightforward to check that the Lie algebroid structure on $\cE$ is
the same as that induced from the associative algebra of differential operators
formed by the pseudodifferential operators, discussed in the next section,
with kernels supported on the diagonal.

\begin{remark}\label{GP.309} The interior of $M[2]$ is a Lie groupoid
  quantizing the Lie algebroid $\cE.$ However in most interesting cases
  $M[2]$ itself is not a Lie groupoid. Rather, the intention is the notion
  of generalized product should correspond to `good' compactifications of
  certain Lie groupoids. Note that, as for a Lie group, it is totally
  unreasonable to expect that the compactification of a Lie groupoid should
  be a Lie groupoid.
\end{remark}

\section{Smoothing and pseudodifferential algebras}\label{Ops}

Each generalized product gives rise to `small' algebra of
pseudodifferential `operators' determined by kernels on $M[2]$ conormal
with respect to the diagonal. There are representations of these algebras,
acting on smooth functions on (or more generally corresponding algebras
acting on sections of a vector bundle over) $M[1].$ In the case of a
stretched product the kernels are determined by these actions as
operators. However, as is clear from Example~\ref{GP.156}, where the action
is trivial, this is not in general the case and there are `hidden
variables' corresponding to the fact that these pseudodifferential
operators always give a `microlocalization' of the Lie algebroid discussed
in \S\ref{LA}.

The algebra of fibrewise smoothing operators for a fibration
\eqref{GP.120}, with compact boundaryless fibres, arises directly from
\eqref{2.7.2023.1} and the basic fact that for any fibration of compact
manifolds, and any smooth complex vector bundle $W\longrightarrow Y$ over
the base the push-forward map, or fibre integration, gives a continuous
(and surjective) linear map
\begin{equation}
\phi_*:\CI(M;\Omega \otimes\phi^*W)\longrightarrow \CI(Y;\Omega\otimes W )
\label{2.7.2023.3}\end{equation}
where $\Omega$ is the density bundle on each respective space.

For any fibration there is an induced short exact sequence for the
differential 
\begin{equation}
T_{\phi}M\longrightarrow TM\overset{\phi_*}\longrightarrow TY,
\label{GP.121}\end{equation}
where $T_{\phi}M=\nul(\phi_*)$ is the fibre tangent bundle. This induces a
natural isomorphism for the associated density bundles 
\begin{equation}
\Omega (M)=\phi^*\Omega (Y)\otimes \Omega _{\phi}
\label{GP.122}\end{equation}
where $\Omega _{\phi}$ is the bundle of densities on the fibres of $\phi.$

The algebra of smoothing operators, on sections of a complex vector bundle
$V\longrightarrow M,$ is then identified as a space with
\begin{equation}
\Psi^{-\infty}_{\phi}(M;V)
=\CI(M^{[2]}_\phi;\Pi_L^*V\otimes\Pi_R^*V'\otimes\Pi_R^*\Omega_\phi).
\label{2.7.2023.4}\end{equation}
The action on $\CI(M;V)$ 
\begin{equation}
\Psi^{-\infty}_{\phi}(M;V)\times\CI(M;V)\longrightarrow \CI(M;V)
\label{2.7.2023.5}\end{equation}
comes from pushforward as in \eqref{2.7.2023.3} and \eqref{GP.122}. Namely for
$A\in\Psi^{-\infty}_{\phi}(M;V)$ and $u\in\CI(M;V)$ to define $Au$ we may
choose a positive global section $\nu\in\CI(M;\Omega)$ and set
\begin{equation}
Au=(\Pi_L)_*(\Pi_L^*\nu\cdot A\cdot\Pi_R^*u)/\nu.
\label{2.7.2023.6}\end{equation}
The product in \eqref{2.7.2023.6} includes the pairing of $\Pi_R^*V'$ and
$\Pi_R^*V$ and the tensor product identification of densities gives
$\Pi_R^*\nu\cdot A
\cdot\Pi_R^*u\in\CI(M^{[2]}_{\phi};\Pi_L^*V\otimes\Omega )$
  which pushes forward to define $(Au)\nu.$ That this is independent of the
  choice of $\nu$ follows from the fact that for \eqref{2.7.2023.3} in
  general $\phi_*(v\phi^*f)=f\phi_*(v)$ for all $f\in\CI(Y)$ and
  $v\in\CI(M;\Omega \otimes\phi^*W).$ 

The triple product corresponds to the composition of operators
\begin{equation*}
\xymatrix{
&M^2_{\phi}\\
&M^3_{\phi}\ar[dl]_{\Pi_S}\ar[dr]^{\Pi_F}\ar[u]^{\Pi_C}\\
M^2_{\phi}&&M^2_{\phi}.
}
\label{Newton2015.3}\end{equation*}
Then the kernel of the product is given by push-forward of densities
\begin{equation}
K_{AB}=(\Pi_C)_*\left(\Pi_S^*K_A\Pi_{1}^*\nu\times\Pi_F^*K_B\right)/
\Pi_{1}^*\nu
\label{GP.157}\end{equation}
where $\nu$ is again a positive smooth density on $M$ and the tensor
product pushed forward is naturally a density on $M^{[3]}_{\phi}.$ 

We proceed to generalize these constructions, especially the product, to a
compact generalized product $M[k]$ in place of the fibre products
$M^{[k]}_{\phi}.$

For a b-fibration of compact manifolds with corners, still denoted as in
\eqref{GP.120}, there is a short exact sequence generalizing \eqref{GP.121}
for the b-differential
\begin{equation}
{}^{\bo }T_{\phi}M\longrightarrow {}^{\bo}TM\overset{\phi_*}\longrightarrow
{}^{\bo} TY,
\label{GP.123}\end{equation}
so \eqref{GP.122} is replaced by  
\begin{equation}
\Omega_{\bo}(M)=\phi^*\Omega_{\bo} (Y)\otimes \Omega _{\bo,\phi}.
\label{GP.124}\end{equation}

The fibrations of a generalized product satisfy
\begin{equation*}
\Pi _R\Pi _S=\Pi _L\Pi _F
\label{GP.153}\end{equation*}
so
\begin{equation}
(\Pi _F)_*:\nul((\Pi _S))_*)\longrightarrow \nul ((\Pi _L))_*).
\label{GP.154}\end{equation}
This is an isomorphism near the triple diagonal.

\begin{definition}\label{GP.279} 
A generalized product is said to be \emph{regular} if the determinant of
\eqref{GP.154} is non-vanishing as a section 
\begin{equation}
\delta \in \rho ^\zeta \CI(M[3];\lambda ^d\nul ((\Pi _F)_*)\otimes\lambda ^d\nul((\Pi _L))_*))
\label{GP.155}\end{equation}
for some integer multiindex $\zeta.$ 
\end{definition}

In general the push-forward of a smooth b-density under a b-fibration is
\emph{not} smooth, in fact it may not exist with the obstruction to
existence being the possibility of `fixed hypersurfaces' of $M ,$
$H\in\cM_1(M)$ such that $\phi(H)=M.$ Apart from this, logarithmic terms in
general arise when two (intersecting) boundary hypersurfaces of $M$ are
mapped to one boundary hypersurface of $Y.$

\begin{definition}\label{GP.125} A smooth function (or section of a vector
  bundle) on a compact manifold with corners $Z_1$ is said to have
  \emph{simple support} with respect to a b-fibration $F:Z_1\longrightarrow
  Z_2$ if it vanishes to infinite order at the intersection of any two
  boundary hypersurfaces mapped to the same boundary hypersurface of $Z_2$
  by $F$ and also to infinite order at fixed hypersurfaces.
\end{definition}

\noindent Really only simple vanishing at the fixed hypersurfaces is
relevant for existence and smoothness of push-forwards.

\begin{proposition}\label{GP.126} If $F:Z_1\longrightarrow Z_2$ is a
simple b-fibration then fibre integration defines
\begin{equation}
F_*:\{u\in\CI(Z_1;F^*W\otimes\Omega );u\text{ has simple support}\}
\longrightarrow \CI(Z_2;W\otimes \Omega ).
\label{3.7.2023.1}\end{equation}
\end{proposition}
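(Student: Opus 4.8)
The statement is local on $Z_2,$ and since a b-fibration of compact manifolds with corners is proper, the fibre $F^{-1}(p')$ over any point $p'$ is compact. Covering $F^{-1}(p')$ by finitely many coordinate patches in which $F$ is in the normal form \eqref{SCL.51} of a (simple) b-fibration, and taking a partition of unity subordinate to the induced cover of a neighbourhood of $F^{-1}(p'),$ it suffices by linearity of fibre integration to prove smoothness of $F_*u$ near $p'$ when $u$ is supported in one such patch. There $W$ may be trivialized, so we take it trivial of rank one and write $u=f\,|dx\,dy|$ with $f$ smooth and supported in the patch.

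In the patch, organize the boundary defining functions $x_i$ of $Z_1$ according to \eqref{SCL.51}: the disjoint subsets $S_{j'}$ with $F^*x'_{j'}=\prod_{i\in S_{j'}}x_i$ (all exponents $0$ or $1,$ since $F$ is simple), the $y_l$ equal to some $F^*y'_{l'},$ and the complementary `fibre' variables -- the $x_i$ in no $S_{j'}$ (including defining functions of fixed hypersurfaces) together with the interior variables $z$ on which $F$ does not depend. Fibre integration then factors as iterated integration. Over the genuine fibration directions, the $z$ and the free boundary defining functions, $F$ is locally an ordinary fibration with compact fibre, and fibre integration of a smooth ordinary density is classical and produces a smooth density; no vanishing is needed here (for the ordinary density bundle $\Omega$ the fixed hypersurfaces contribute only convergent, smoothly varying integrals). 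Moreover this partial integral still vanishes to infinite order on each $\{x_i=0\}\cap\{x_j=0\}.$ We are thus reduced to the model in which, up to a product with an identity factor, $F$ is the map $\RR^p_+\ni(x_1,\dots,x_p)\longmapsto x_1\cdots x_p\in\RR_+$ attached to a single set $S=\{1,\dots,p\},$ $p\ge2,$ with $f$ vanishing to infinite order on each $\{x_i=0\}\cap\{x_j=0\},$ $i\ne j.$

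Writing $x_1\cdots x_p=(x_1\cdots x_{p-1})\cdot x_p$ exhibits this model as a composition of two models of the same type with fewer factors, and simple support is arranged precisely so that the density pushed forward under the first factor again has simple support for the second; an induction on $p$ therefore reduces matters to $p=2.$ For $p=2,$ using $x_1$ as coordinate on the fibre $\{x_1x_2=x'\}$ one has $|dx_1\,dx_2|=x_1^{-1}|dx'\,dx_1|,$ hence
\begin{equation}
F_*u=\Big(\int f(x_1,x'/x_1)\,\frac{dx_1}{x_1}\Big)\,|dx'|,\qquad x_1\in[x'/\epsilon,\epsilon].
\label{GP.pushint}\end{equation}
Fix a target regularity $M.$ Since $f$ vanishes to infinite order at $\{x_1=x_2=0\},$ Taylor's theorem (Hadamard's lemma iterated in the normal variables $x_1,x_2$) gives, for every $N,$ a decomposition $f=\sum_{a+b=N}x_1^{a}x_2^{b}\,g_{ab}$ with $g_{ab}$ smooth. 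The $(a,b)$ term contributes to \eqref{GP.pushint} the function $x'^{\,b}\int x_1^{a-b-1}g_{ab}(x_1,x'/x_1)\,dx_1,$ and splitting the integral at $x_1=\sqrt{x'}$ (or rescaling $x_1=x'u$) one checks by elementary estimates that this extends to a $C^{M}$ function of $x'\ge0$ once $N$ is large enough, the only obstruction to smoothness being terms of the form $x'^{\,c}\log(1/x')$ whose exponent $c$ grows linearly with $N.$ Summing the finitely many terms gives $F_*u\in C^{M}$ near $x'=0,$ and as $M$ is arbitrary $F_*u$ is smooth; this glues with the (trivial) smoothness away from $x'=0$ and over the other charts.

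The genuine content, and the one step that is not routine normal-form bookkeeping, is the last: a single Hadamard decomposition does \emph{not} suffice, one must let $N\to\infty,$ exchanging each extra unit of desired regularity for a finer expansion, which works exactly because `simple support' demands infinite-order rather than merely high finite-order vanishing. The one remaining delicate point is to verify, in the reduction to $p=2,$ that simple support is preserved under the composition factorization -- which is precisely what Definition~\ref{GP.125} is designed to guarantee.
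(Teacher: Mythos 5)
Your route is genuinely different from the paper's: the paper disposes of this proposition in one line, as a special case of the push-forward theorem of \cite{MR93i:58148}, whereas you give a self-contained argument from the local normal form \eqref{SCL.51}. The strategy is sound, and its core \mhy\ the $p=2$ model $x'=x_1x_2$, where the Hadamard decomposition $f=\sum_{a+b=N}x_1^ax_2^bg_{ab}$ of a corner-flat integrand makes $\int f(x_1,x'/x_1)\,dx_1/x_1$ a $C^M$ function of $x'$ once $N$ is large, the would-be $x'^{c}\log x'$ terms being pushed to arbitrarily high order \mhy\ is exactly the elementary mechanism behind the cited theorem in this special (flat, ordinary-density) case. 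What the citation buys the paper is uniformity and no case analysis (the general theorem handles polyhomogeneous and logarithmic behaviour, b-densities, etc.); what your argument buys is transparency about precisely where simple support is used, and your observation that for the ordinary density bundle the fixed hypersurfaces cause no trouble is correct and consistent with the remark following Definition~\ref{GP.125}.

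The one genuine gap is in the induction on $p.$ Your inductive hypothesis is the proposition itself (smoothness of the push-forward), but the factorization $x_1\cdots x_p=(x_1\cdots x_{p-1})\cdot x_p$ needs more: you must know that the partial push-forward $g(t,x_p),$ $t=x_1\cdots x_{p-1},$ vanishes to infinite order, together with all its $(t,x_p)$-derivatives, at the corner $\{t=0\}\cap\{x_p=0\}.$ This is not ``what Definition~\ref{GP.125} is designed to guarantee'' \mhy\ the definition only names the support condition and says nothing about its stability under push-forward \mhy\ and it does not follow formally from the smoothness statement you are inducting on. It is true, but requires its own estimate: on the fibre over $(t,x_p)$ some $x_i\le t^{1/(p-1)},$ while infinite-order vanishing of $u$ at $\{x_i=0\}\cap\{x_p=0\}$ only bounds $u$ by powers of $\max(x_i,x_p);$ one must exploit the arbitrariness of the vanishing order to absorb the root $t^{1/(p-1)},$ the logarithmic fibre volume, and, for the $t$-derivatives, the inverse powers of $x_i$ they generate. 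So either strengthen the inductive statement to ``the push-forward is smooth \emph{and} rapidly vanishing at each corner of the base lying under corners where $u$ is flat,'' proving that by the same estimates, or drop the induction and run your decomposition argument directly on $x'=x_1\cdots x_p$ using the flatness at all pairwise corners. With that repair the proof is complete.
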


\begin{proof} This is a special case of the push-forward theorem from
  \cite{MR93i:58148}. 
\end{proof}

The usual way in which this support property is seen to be satisfied is for
$u$ to vanish to infinite order at all but one of the boundary
hypersurfaces of $Z_1$ in the preimage of each boundary hypersurface of
$Z_2.$ If $u$ vanishes to infinite order on all the hypersurfaces in the
preimage of a hypersurface of $Z_2$ then the push-forward also vanishes to
infinite order at that hypersurface.

The kernels of the smoothing `operators' associated to a compact
generalized product $M[k]$ are smooth sections of a bundle over $M[2].$ As
noted in Proposition~\ref{GP.82} each boundary hypersurface of $M[1]$
determines a diagonal boundary hypersurface of $M[2]$ which maps into it
under either of the projections $\Pi_L,$ $\Pi_R:M[2]\longrightarrow M[1].$

\begin{definition}\label{GP.130} The space of (small) smoothing kernels
for a vector bundle $V$ over $M[1]$ for a compact generalized product $M[*]$ is 
\begin{multline}
\Psi^{-\infty}(M[*];V)=\{A\in\CI(M[2];\Pi_L^*V\otimes\Pi_R^*V'\otimes
\Omega _{L});\\
A\equiv0\Mat H\in\cM_1(M[2])\Mif H\cap D=\emptyset\}
\label{GP.131}\end{multline}
where $\Omega _{L}$ is the space of fibre b-densities for
$\Pi_L:M[2]\longrightarrow M[1],$ that is the densities for the bundle of
null spaces of $\Pi_L.$ The small space(s) of pseudodifferential kernels
are similarly the space(s) of conormal densities with singularities on $D$
\begin{multline}
\Psi^{m}_{\sigma}(M[*];V)=\{B\in I^{m+\oqu(m-\kappa)}_{\sigma}(M[2],D;\Pi_L^*V\otimes\Pi_R^*V'\otimes
\Omega _{L});\\
A\equiv0\Mat H\in\cM_1(M[2])\Mif H\cap D=\emptyset\}.
\label{GP.168}\end{multline}
Here $\sigma$ refers to the class of conormal distributions admitted, for
`classical operators' it is omitted, for the class of type $1,0$ with
bounds we write $\sigma =\infty.$
\end{definition}

\noindent In the case of a concrete generalized product written $M[k;T]$
where $T$ denotes a `structure' on $M$ we use the simpler notation 
\begin{equation}
\Psi^{m}_{T}(M;V)=\Psi^{m}(M[*;T];V).
\label{GP.256}\end{equation}

Although the main concern here is with smoothing operators, recall that the
kernels of pseudodifferential operators are particular classes of conormal
distributions the basic properties of which we briefly describe. We
interpret the conormal distributions at a point as the quantization of a
simplicial compactification of the translation group.

Let $V $ be a real finite-dimensional vector space. As a translation group
the Lie algebra is also identified with $V$ and its enveloping algebra is
naturally identified with the polynomials on $V^*$ or the distributional
densities on $V$ supported at the origin. These views are intertwined by
Fourier transform
\begin{equation}
\cF:\cSp(V;\Omega )\longrightarrow \cSp(V^*).
\label{GP.36a}\end{equation}

Let $\oV^*$ be the radial compactification of $V^*$ which is a compact
manifold with boundary (a closed ball), let $\rho$ be a boundary defining
function, for instance $\rho =(1+|w|^2)^{-\ha}$ in terms of a metric on
$V^*.$ Then the `space of classical symbols of order $m$' is by definition
\begin{equation}
\rho ^{-m}\CI(\oV^*)\subset\cSp(V^*),\ m\in\bbC.
\label{GP.37}\end{equation}
Since these are tempered distibution on $V^*$ the  distributional inverse
Fourier transforms may be applied and
\begin{equation}
\cF^{-1}(\rho ^{-m}\CI(\oV^*))=I^{m+\oqu\dim V}_{\cS}(V,\{0\};\Omega )\subset\cSp(V;\Omega )
\label{GP.38}\end{equation}
is identified as the space of the classical conormal distributions of order
$m+\oqu\dim V.$ These distributions are singular only at the origin and decay rapidly with
all derivatives at infinity. The normalization of the order (due to
H\"ormander) is more natural than it might appear to be!

These space form a filtered convolution algebra
\begin{equation}
I^{m}_{\cS}(V,\{0\};\Omega )*I^{m'}_{\cS}(V,\{0\};\Omega )=I^{m+m'}_{\cS}(V,\{0\};\Omega ).
\label{GP.39}\end{equation}
The distributions supported at the origin, the derivatives of the Dirac
delta, are included in these spaces with a derivative of order $\alpha$
lying in $I^{|\alpha|+\oqu\dim V}_{\cS}(V,\{0\};\Omega),$ so the
constant-coefficent pseudodifferential operators satisfy 
\begin{equation}
\Psi^m_{\operatorname{ti}}(V)=I^{m-\oqu\dim
  V}_{\cS}(V,\{0\};\Omega)\supset\Dff^m_{\operatorname{ti}}(V)\text{ act  on }\cS(V).
\label{GP.43}\end{equation}

Even though \eqref{GP.39} follows immediately from the obvious
multiplicative property 
\begin{equation}
\rho ^{-m}\CI(\oV^*)\cdot \rho ^{-m'}\CI(\oV^*)=\rho ^{-m-m'}\CI(\oV^*)
\label{GP.41}\end{equation}
it is convenient to realize it in terms of functorial properties
of conormal functions and the mapping diagram 
\begin{equation}
\xymatrix{
&V\\
&V^2\ar[u]^{\pi_C}\ar[dl]_{\pi_S}\ar[dr]^{\pi_F}\\
V&&V
}\quad
\xymatrix{
&v\\
&(v,w)\ar@{|->}[u]\ar@{|->}[dl]\ar@{|->}[dr]\\
v-w&&w.
}
\label{GP.40}\end{equation}
Namely 
\begin{equation}
\kappa _1*\kappa _2=(\pi_C)_*(\pi_S^*\kappa _1\cdot \pi_F^*\kappa _2).
\label{GP.42}\end{equation}

In this case the three (face) maps in
\eqref{GP.40} are projections with fibre and base $V.$ Under the lower two
maps the inverse images of zero in $V$ are respectively 
\begin{equation}
\pi_S^{-1}(\{0\})= \{(v,v)\in V^2;v\in
V\}=\Diag(V^2),\ \pi_F^{-1}(\{0\})=V\times \{0\}.
\label{GP.44}\end{equation}
In general distributions cannot be pulled back under smooth maps but in
this case this is admissible by wavefrontset considerations and the
$I^{m+\oqu\dim V}(V,\{0\};\Omega)$ pull back into the more general spaces
$I^m(V^2;\pi_{S,F}^*(\{0\});\pi_{S,F}^*\Omega)$ dicussed below for an
embedded submanifold of a manifold. The transversality of the two manifolds
$\pi_{S}^*(\{0\})$ and $\pi_{F}^*(\{0\})$ means that the product on the
right in \eqref{GP.42} is well-defined. It is a `conormal distribution of
product type'; the tensor product of the two pulled-back density lines is
canonically the density line of $V^2.$ The `composite' projection $\pi_C$
is transversal to the two lifts of $\{0\}$ and as a result the push-forward
is smooth outside the image $\pi_C(\pi_{S}^*(\{0\})\cap
\pi_{F}^*(\{0\}))=\{0\}\in V$ and \eqref{GP.42} follows from an appropriate
push-forward theorem for conormal distributions.

So, although the discussion of \eqref{GP.42} is far more involved than the
use of the Fourier transform to prove \eqref{GP.39} which becomes 
\begin{equation}
\Psi^m_{\operatorname{ti}}(V)\circ
\Psi^{m'}_{\operatorname{ti}}(V)=\Psi^{m+m'}_{\operatorname{ti}}(V)
\label{GP.45}\end{equation}
it is this latter approach which is generalized below.

The main interest in the space $I^M_\infty$ and corresponding kernels is
the fact that they include the classical space $I^M$ and \ci\ is dense in
$I^M_\infty$ in the topology of $I^{M'}_\infty$ for any $M'>M.$ 

\begin{theorem}\label{GP.169} The small pseudodifferential kernels for a
  compact connected generalized product define an order-filtered
  associative algebra with identity with a surjective and multiplicative
  symbol map
\begin{equation}
\sigma:\Psi^{m}(M[*];V)\longrightarrow S^{m}(E^*M[1];\hom(V))
\label{GP.170}\end{equation}
which gives a short exact sequence for each $m$  
\begin{equation}
\xymatrix{
\Psi^{m-1}(M[*];V)\ar@{^(->}[r]&\Psi^{m}(M[*];V)\ar@{>>}[r] & S^{m}(E^*M[1];\hom(V)).
}
\label{GP.171}\end{equation}
Here $EM[1]$ is the structure bundle of the associated Lie groupoid.
\end{theorem}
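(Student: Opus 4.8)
The strategy is the by-now-standard one for microlocal calculi on manifolds with corners: the composition triangle on $M[3]$ carries the product, and all the geometric input is supplied by Theorem~\ref{GP.217}.

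\emph{Definition of the product.} For $A\in\Psi^{m}(M[*];V)$ and $B\in\Psi^{m'}(M[*];V)$ I would set $K_{AB}$ equal to the push-forward under $\Pi_C$ of $\Pi_S^*K_A$ times $\Pi_F^*K_B$, with the density factors arranged exactly as in \eqref{GP.157} (using a fixed positive smooth b-density $\nu$ on $M[1]$). Three points make this legitimate, all now available. First, $\Pi_S$ and $\Pi_F$ are b-fibrations and $D\subset M[2]$ is an interior p-submanifold, so by Lemma~\ref{GP.212} and Proposition~\ref{GP.128} the preimages $\Pi_S^{-1}(D)=D_{1,2}$ and $\Pi_F^{-1}(D)=D_{2,3}$ are interior p-submanifolds of $M[3]$; hence $\Pi_S^*K_A$ and $\Pi_F^*K_B$ are conormal at $D_{1,2}$ and $D_{2,3}$ respectively, each still vanishing to infinite order at the remaining boundary hypersurfaces in the corresponding preimage. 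Second, by Lemma~\ref{GP.213} the submanifolds $D_{1,2}$ and $D_{2,3}$ meet transversally, with intersection the triple diagonal $D_P$, $P=\{1,2,3\}$, so the product $\Pi_S^*K_A\cdot\Pi_F^*K_B$ is a conormal distribution of product type on $M[3]$, singular exactly along $D_{1,2}\cup D_{2,3}$. Third, $\Pi_C$ is a structure b-fibration which, by functoriality exactly as computed in Example~\ref{GP.294}, restricts to diffeomorphisms $D_{1,2}\xrightarrow{\ \sim\ }M[2]$ and $D_{2,3}\xrightarrow{\ \sim\ }M[2]$ and carries $D_P=D_{1,2}\cap D_{2,3}$ diffeomorphically onto $D$. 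Because $\Pi_C$ is a diffeomorphism on each branch separately, the fibre integration annihilates the conormal contribution of each branch and the only surviving singularity lies over $D$; the b-fibration push-forward theorem for conormal distributions (the relevant case of \cite{MR93i:58148}, cf.\ Proposition~\ref{GP.126}) then gives $K_{AB}$ conormal at $D$, smooth off $D$, of conormal order the sum of those of $K_A$ and $K_B$, so $K_{AB}\in\Psi^{m+m'}(M[*];V)$, and still vanishing to infinite order at every $H\in\cM_1(M[2])$ with $H\cap D=\emptyset$, since the integrand vanishes to infinite order throughout $\Pi_C^{-1}$ of all non-diagonal hypersurfaces and fibre integration propagates infinite-order vanishing (Proposition~\ref{GP.126}); for the same reason the `simple support' hypothesis of Definition~\ref{GP.125} is automatic in the small calculus and no regularity assumption is needed.

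\emph{Associativity and identity.} Associativity I would deduce from the analogous picture on $M[4]$: its diagonals again form a p-clean, intersection-closed collection (Theorem~\ref{GP.217}), so both $(AB)C$ and $A(BC)$ are realised as iterated push-forwards from $M[4]$ to $M[2]$ along structure b-fibrations, and these agree because the relevant square of structure maps commutes (functoriality) and push-forward is itself functorial. The identity element is $\Id_D$, the delta b-density along $D$ tensored with $\Id_V$, which lies in $\Psi^{0}(M[*];V)$; that $A\cdot\Id_D=\Id_D\cdot A=A$ is immediate from the composition formula once one notes, again by functoriality, that $\Pi_S$ and $\Pi_C$ agree on $D_{2,3}$ while $\Pi_F$ and $\Pi_C$ agree on $D_{1,2}$, so the fibre integration reduces to restriction-and-transport and returns $K_A$.

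\emph{Symbol map and exact sequence; the main obstacle.} A conormal distribution at the interior p-submanifold $D$ has a principal symbol, with the usual symbol sequence identifying the quotient of consecutive conormal spaces with symbols on $N^*D$. I would identify $N^*D$ with $E^*M[1]$: since $\Pi_R$ is a b-fibration restricting to the identity on $D$, the bundle $\nul((\Pi_R)_*)|_D$ maps isomorphically onto the normal bundle $ND$ (ordinary normals, $D$ being interior), and by Definition~\ref{GP.300} this bundle is precisely $E$; dualising gives the symbol map \eqref{GP.170}. The short exact sequence \eqref{GP.171} is then the defining property of the conormal order filtration. Surjectivity of $\sigma$ is local over $M[1]\cong D$: a partition of unity reduces quantisation of a given symbol to the model construction \eqref{GP.38} in a collar of $D$, with kernel supported there, and different choices differ by a lower-order kernel. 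Multiplicativity $\sigma(AB)=\sigma(A)\sigma(B)$ is the leading-order content of the composition formula; in the model it is the pointwise product of symbols \eqref{GP.41}, and in general it follows by the same localisation together with the elementary symbolic behaviour of pull-back and push-forward of conormal distributions under $\Pi_S,\Pi_F,\Pi_C$, which are diffeomorphisms on the branches that matter. Finally $EM[1]$ is the Lie algebroid bundle of \S\ref{LA}, i.e.\ the algebroid of the interior groupoid $\inn M[2]$ (Remark~\ref{GP.309}). Everything except the third point of the first paragraph is formal; the genuine work — and the main obstacle — is precisely that push-forward step: showing that fibre integration along $\Pi_C$ of a product-type conormal distribution with the stated geometry produces a conormal distribution at $D$ of the right order and boundary behaviour, which means running Melrose's push-forward theorem \cite{MR93i:58148} in tandem with the pull-back and product calculus for conormal distributions in the b-category, carefully tracking the density lines and the order normalisation of \eqref{GP.38}.
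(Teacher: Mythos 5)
Your proposal is correct and follows essentially the same route as the paper: the product is defined by push-forward along $\Pi_C$ of the pulled-back kernels on $M[3]$ with the density bookkeeping of \eqref{GP.157}, legitimised by the diagonal geometry of Theorem~\ref{GP.217} (via Lemmas~\ref{GP.212} and \ref{GP.213}) and the b-fibration push-forward theorem, with associativity read off from $M[4]$. The paper's own proof concentrates on the smoothing case and is terser about the conormal/symbolic extension (asserting it follows from the diagonal structure, especially \eqref{GP.221}), so your explicit identification of $N^*D$ with $E^*$ via $\nul((\Pi_R)_*)|_D$ and your treatment of the symbol sequence simply fill in details the paper leaves implicit rather than diverging from it.
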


\begin{proof} We concentrate on the smoothing operators, since the
extension to pseudodifferential operators only involves the behaviour of
conormal singularities at the diagonals and this turns out to be
essentially the same as in the case of a fibration.

The product is defined by direct generalization of \eqref{GP.157}. Thus if
$A$ and $B\in\Psi^{-\infty}(M[*];V)$ then the kernel of the composite is 
\begin{equation}
K_{AB}=(\Pi_C)_*\left(\Pi_S^*A\cdot\Pi_{1}^*\nu\times\Pi_F^*B\right)/
\Pi_{1}^*\nu
\label{GP.310}\end{equation}

To see that this is well-defined first note that the densities behave in the
expected manner: 
\begin{equation}
(\Pi_L\Pi_S)^*\Omega _{\bo}(M[1])\otimes
\Pi_S^*\Omega (\nul((\Pi_L)_*)\otimes \Pi_F^*\Omega (\nul((\Pi_L)_*)\longrightarrow\Omega _{\bo}(M[3])
\label{GP.312}\end{equation}
is a smooth map of line bundles which is non-vanishing near the triple
diagonal. The product of the kernels has simple support relative to $\Pi_C$
so \eqref{GP.310} defines a bilinear map 
\begin{equation}
\Psi^{-\infty}(M[*];V)\times \Psi^{-\infty}(M[*];V)\longrightarrow \Psi^{-\infty}(M[*];V).
\label{GP.313}\end{equation}

The space $M[4]$ can be used to see that this product is
associative. Consider the structural b-fibrations  
\begin{equation}
\Pi_{[i,j]}:M[4]\longrightarrow M[2],\ 1\le i< j\le 4
\label{GP.314}\end{equation}
associated to the strictly increasing map $I_{i,j}:J(2)\longrightarrow J
(4)$ which omits $i$ and $j.$ The densities behave as in \eqref{GP.312} so giving a trilinear map  
\begin{equation}
\begin{gathered}
\Psi^{-\infty}(M[*];V)\times \Psi^{-\infty}(M[*];V)\times \Psi^{-\infty}(M[*];V)\longrightarrow \Psi^{-\infty}(M[*];V),\\
(A,B,C)\longmapsto (\Pi_{2,3})_*\left(\Pi_{3,4}^*A\cdot\Pi_{L}^*\nu\times\Pi_{1,4}^*B\times \Pi_{1,2}\right)/
\Pi_{1}^*\nu.
\end{gathered}
\label{GP.315}\end{equation}
This factors through the products on the left and right to give associativity.

The pseudodifferential extension follows from the structure of the
diagonals, especially \eqref{GP.221}.
\end{proof}
\begin{proposition}\label{GP.132} For a stretched product, $M[k;T],$ the
kernels in \eqref{GP.131} of the (small) algebra of smoothing operators,
act as linear operators on $\CI([M[1];V)$ by analogy with (and extension of)
  \eqref{2.7.2023.6}:
\begin{equation}
Au=
(\Pi_L^T)_*\left(A\cdot(\Pi_2^T)^*u\cdot(\Pi_L)^*\nu_{\bo}\right)/\nu_{\bo},\
0<\nu_{\bo}\in\CI(M[1;T];\Omega_{\bo}).
\label{GP.133}\end{equation}
\end{proposition}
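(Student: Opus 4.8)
The plan is to show that the right side of \eqref{GP.133} is a well-defined smooth section of $V$ over $M[1;T]$, independent of the choice of $\nu_{\bo}$ and linear in $u$; since it reduces to \eqref{2.7.2023.6} when $M[*;T]$ is the fibre product of a boundaryless fibration (where the b-density bundles are the ordinary density bundles), this establishes that each kernel in \eqref{GP.131} acts as a linear operator on $\CI(M[1;T];V)$.

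First I would assemble the integrand as a b-density on $M[2;T]$ valued in $\Pi_L^*V$. For $u\in\CI(M[1;T];V)$ the pull-back $(\Pi_R^T)^*u$ is a section of $\Pi_R^*V$, and pairing it fibrewise against the $\Pi_R^*V'$ factor of the kernel $A$ from \eqref{GP.131} leaves $A\cdot(\Pi_R^T)^*u\in\CI(M[2;T];\Pi_L^*V\otimes\Omega_L)$. Tensoring with $(\Pi_L^T)^*\nu_{\bo}$ and invoking the b-density isomorphism \eqref{GP.124} for the simple b-fibration $\Pi_L^T:M[2;T]\to M[1;T]$, namely $\Omega_{\bo}(M[2;T])\cong(\Pi_L^T)^*\Omega_{\bo}(M[1;T])\otimes\Omega_L$, identifies
\[
w:=A\cdot(\Pi_R^T)^*u\cdot(\Pi_L^T)^*\nu_{\bo}\in\CI\bigl(M[2;T];\Pi_L^*V\otimes\Omega_{\bo}(M[2;T])\bigr).
\]

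The main step is to push $w$ forward by $\Pi_L^T$ and obtain a smooth density on $M[1;T]$; this is Proposition~\ref{GP.126}, provided $w$ has simple support relative to $\Pi_L^T$, which is exactly what the vanishing condition in \eqref{GP.131} is designed to supply. By the discussion preceding Proposition~\ref{GP.136}, for each $H\in\cM_1(M[1;T])$ the preimage $(\Pi_L^T)^{-1}(H)$ is a union of boundary hypersurfaces of $M[2;T]$, exactly one of which — written $H[2]$ — meets the diagonal $D$, and the boundary hypersurfaces of $M[2;T]$ meeting $D$ are precisely the $H[2]$. By \eqref{GP.131} the kernel $A$, hence $w$, vanishes to infinite order on every other boundary hypersurface of $M[2;T]$, so over any $H$ at most one hypersurface in $(\Pi_L^T)^{-1}(H)$ carries a non-rapidly-vanishing $w$; by the remark following Proposition~\ref{GP.126} this gives simple support. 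Moreover no fixed hypersurface for $\Pi_L^T$ causes trouble: if $H'\in\cM_1(M[2;T])$ had $\Pi_L^T(H')=M[1;T]$ it could not be any $H[2]$, since $H[2]\subseteq(\Pi_L^T)^{-1}(H)$ forces $\Pi_L^T(H[2])\subseteq H\subsetneq M[1;T]$; hence $w$ vanishes to infinite order, in particular to first order, at every fixed hypersurface. Proposition~\ref{GP.126} then gives $(\Pi_L^T)_*w\in\CI\bigl(M[1;T];V\otimes\Omega_{\bo}(M[1;T])\bigr)$, and dividing by the positive b-density $\nu_{\bo}$ yields $Au\in\CI(M[1;T];V)$.

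Finally, independence of $\nu_{\bo}$ is the projection formula: a second positive b-density is $\nu_{\bo}'=f\nu_{\bo}$ with $0<f\in\CI(M[1;T])$, so the corresponding integrand is $(\Pi_L^T)^*f\cdot w$ and $(\Pi_L^T)_*\bigl((\Pi_L^T)^*f\cdot w\bigr)=f\,(\Pi_L^T)_*w$ by the identity $\phi_*(v\phi^*f)=f\phi_*v$ already used after \eqref{2.7.2023.6}, so dividing by $\nu_{\bo}'=f\nu_{\bo}$ returns the same section; linearity in $u$ is immediate. The one place that needs real care is the verification of simple support: one must reconcile the abstract vanishing hypothesis built into \eqref{GP.131} with the concrete boundary structure of the stretched product, and the decisive input is the fact, recalled before Proposition~\ref{GP.136}, that above each boundary hypersurface of $M[1;T]$ exactly one boundary hypersurface of $M[2;T]$ carries the diagonal — after which ruling out obstructing fixed hypersurfaces and concluding is routine.
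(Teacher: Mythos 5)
Your treatment of the mapping property is correct and is essentially the paper's own argument for that half: the vanishing condition in \eqref{GP.131}, combined with the fact (from the discussion preceding Proposition~\ref{GP.136}) that the boundary hypersurfaces of $M[2;T]$ meeting $D$ are the $H[2]$ and are mapped by $\Pi_L^T$ to the corresponding $H\in\cM_1(M[1;T])$, gives simple support of the integrand relative to $\Pi_L^T$; the b-density identification \eqref{GP.124} makes the numerator in \eqref{GP.133} a smooth (bundle-valued) b-density; Proposition~\ref{GP.126} gives smoothness of the push-forward; and independence of $\nu_{\bo}$ is the projection formula $\phi_*(v\,\phi^*f)=f\,\phi_*(v).$ You are in fact more explicit than the paper on the decisive point, namely ruling out non-trivial behaviour at fixed hypersurfaces of $\Pi_L^T,$ which the paper dispatches in one sentence.

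However, the paper reads this proposition as asserting an action of the \emph{algebra} of kernels, and its proof continues: ``it only remains to prove that composites lie in the same space,'' i.e.\ \eqref{SCL.1}, $\Psi^{-\infty}_T\circ\Psi^{-\infty}_T\subset\Psi^{-\infty}_T.$ That step is carried out on the triple stretched product via the three projections in \eqref{SCL.2}: the composite kernel is the push-forward \eqref{SCL.5}, the support conditions on the two factors force infinite-order vanishing away from the faces meeting the lifted diagonal $D_{1,2},$ so the numerator is again a smooth b-density with simple support relative to $\Pi_C^{T}$ and Proposition~\ref{GP.126} applies once more; the symbol multiplicativity \eqref{SCL.10} is then noted. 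Your proposal omits this entirely. If the statement is read literally --- each kernel in \eqref{GP.131} defines a linear operator on $\CI(M[1];V)$ by \eqref{GP.133} --- your argument is complete; but to prove what the paper actually establishes under this proposition you must add the composition step, either by the triple-space push-forward as above or by invoking the product \eqref{GP.310} from Theorem~\ref{GP.169} together with a verification that operator composition under \eqref{GP.133} is computed by that kernel product.
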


\begin{proof} As shown above the diagonal boundary hypersurfaces of $M[2],$
  those meeting $D,$ map to the corresponding hypersurfaces of $M[1]$ under
  both projections. Thus the kernels in \eqref{GP.131} are simply supported
  relative to $\Pi_L.$ The same is therefore true for the argument of
  the push-forward in \eqref{GP.133} which by \eqref{GP.124} is a b-density
  on $M[2;T].$ With the independence of choice of $\nu _{\bo}$ following as
  before that this gives the bilinear map
\begin{equation}
\Psi^{-\infty}_T(M;V)\times \CI(M[1];V)\longrightarrow \CI[M[1];V).
\label{GP.134}\end{equation}
Thus it only remains to prove that composites lie in the same space, that 
\begin{equation}
\Psi^{-\infty}_T(M[*];V)\circ\Psi^{-\infty}_T(M[*];V)\subset\Psi^{-\infty}_T(M[*];V) .
\label{SCL.1}\end{equation}

To do so we use properties of the triple product and the three
stretched projections 
\begin{equation}
\xymatrix{
&M[2]&\\
&M[3]\ar[u]_{\Pi_C}\ar[dl]_{\Pi_S}\ar[dr]^{\Pi_F}\\
M[2]&&M[2].
}
\label{SCL.2}\end{equation}

The kernel of the product of two elements, $A_1$ and $A_2$ is given as the
push-forward
\begin{equation}
(\Pi_C^{T})_*\left((\Pi_S^{T})^*(A_1)\cdot(\Pi_F^{T})^*(A_2)\cdot(\Pi_C^{T})^*(\nu)\right)/\nu
\label{SCL.5}\end{equation}

Similarly the first factor vanishes to infinite order at the front faces
other than those corresponding to $T$ and $D_{1,2}.$ Thus the product
vanishes to infinite order except at the triple front face correponding to
$T(\phi).$ Thus
before push-forward the numerator in \eqref{SCL.5} is a smooth
b-density. Then \eqref{3.7.2023.1} applies to show that the push-foward is
a smooth b-density on $M[2;T]$ to get the product \eqref{SCL.1}.

The multiplicative property (and extension to bundle coefficients) 
\begin{equation}
\sigma _{T}(A_1A_2)=\sigma _{T}(A_1)\cdot\sigma _{T}(A_2)
\label{SCL.10}\end{equation}
then follows either by use of an appropriate oscillatory testing argument
or by identifying the product of the kernels, restricted to the front face
in \eqref{SCL.5} with the kernel of the fibrewise convolution of the
restricted kernels.
\end{proof}

\section{Semiclassical generalized product}\label{SGP}

The construction of the Atiyah-Singer map in \cite{IndBunGer} uses the
semiclassical, and more generally the adiabatic, calculus associated to a
fibration or iterated fibration. This is discussed explicitly in the next
section, but Theorem~\ref{GP.217} allows a semiclassical generalized
product to be constructed from a given generalized product, which as usual
we assume to be compact and connected.

Let $M[k]$ be such a generalized product. Then for each $k$ the products 
\begin{equation}
[0,1]\times M[k]
\label{GP.223}\end{equation}
also form a generalized product with all structure maps being extended by
the identity on $[0,1].$ Then Theorem~\ref{GP.217} shows that the
(multi)diagonals $\cD\subset M[k]$ form, for each $k,$ a p-clean collection
of submanifolds closed under intersection.

It follows that the collections 
\begin{equation}
\cD_{\scl}=\{\{0\}\times D;D\in\cD\}\subset [0,1]\times M[k]
\label{GP.224}\end{equation}
also form a p-clean collection closed under intersection.

\begin{definition}\label{GP.225} The semiclassical spaces corresponding to
  a compact, connected generalized product are the compact manifolds with
  corners 
\begin{equation}
M[k;\scl]=[[0,1]\times M[k];\cD_{\scl}]
\label{GP.226}\end{equation}
where as usual the blow-up is to be performed in a size order using
Proposition~\ref{GP.82}.
\end{definition}

\noindent When a particular generalized product is denoted $M[k;T]$ with
`$T$' being some label to indicate the structure then the semiclassical
spaces can be denoted $M[k;\scl T].$

\begin{theorem}\label{GP.227} The semiclassical spaces associated to a
  compact connected generalized product $M[k]$ form a compact connected
  generalized product.
\end{theorem}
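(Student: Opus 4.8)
The plan is to verify the criteria of Lemma~\ref{GP.81}. Since blow-up leaves dimensions unchanged, $\dim M[k;\scl]=1+\mu+(k-1)\kappa$, so Definition~\ref{SCL.43} holds with constants $\mu+1$ and $\kappa$; compactness is immediate, and connectedness follows because the centres $\{0\}\times D$ in \eqref{GP.226} have positive codimension, so $M[k;\scl]$ is covered by the front faces together with the connected dense set $(0,1]\times M[k]$. As the centres lie in the boundary hypersurface $\{0\}\times M[k]$, the interior is unchanged: $\inn M[k;\scl]=(0,1)\times\inn M[k]$, which forms a generalized product by Lemma~\ref{GP.277} and Lemma~\ref{GP.275}. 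It then remains to extend the structural diffeomorphisms to diffeomorphisms, the fibrations $\Pi_k$ to b-fibrations, and the embeddings $D_{1,2}$ to p-embeddings of the $M[k;\scl]$.

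For the diffeomorphisms: the structural diffeomorphisms of $M[k]$ permute the multidiagonals (functoriality, as after \eqref{GP.208}) and preserve their dimensions, so, extended by the identity on $[0,1]$, they permute $\cD_{\scl}$ compatibly with the dimension partial order; Lemma~\ref{GP.278} applied at each stage of a size-order blow-up, with Theorem~\ref{GP.323} permitting reordering within a fixed dimension, then lifts them to diffeomorphisms of $M[k;\scl]$. For the embeddings: $D_{1,2}^{-1}$ of a multidiagonal of $M[k]$ is a multidiagonal of $M[k-1]$ and, by Theorem~\ref{GP.217}, $D_{1,2}(M[k-1])$ meets $\cD(k)$ p-cleanly, so repeated use of Lemma~\ref{GP.356} shows $\Id\times D_{1,2}$ lifts to a p-embedding $M[k-1;\scl]\hookrightarrow M[k;\scl]$.

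The substantial point is that $g=\Id\times\Pi_k$, a simple b-fibration of $[0,1]\times M[k]$ onto $[0,1]\times M[k-1]$, lifts to a b-fibration $M[k;\scl]\to M[k-1;\scl]$. Write $\cD(k)=\cF\sqcup\cG$ as in \eqref{GP.349}, $\cF$ being the partitions in which $k$ is a singleton, and let $\cF_{\scl},\cG_{\scl}$ be the corresponding subcollections of $\cD_{\scl}(k)$. Since the intersection of two diagonals corresponds to the join of the partitions, $\cF$ and $\cG$ are each closed under intersection and the intersection of an $\cF$-element with a $\cG$-element lies in $\cG$, so Corollary~\ref{GP.182} gives
\[
M[k;\scl]=\bigl[\,[[0,1]\times M[k];\cF_{\scl}]\;;\;\cG_{\scl}\,\bigr].
\]
By Lemma~\ref{GP.350} each element of $\cF_{\scl}$ is the single diagonal $g^{-1}(\{0\}\times D)=\{0\}\times\Pi_k^{-1}(D)$ for $D\in\cD(k-1)$, and the resulting bijection $\cF_{\scl}\leftrightarrow\cD_{\scl}(k-1)$ shifts dimension by the constant $\kappa$; matching size orders on the two sides and iterating Proposition~\ref{GP.352} — at each step the lifted map remains a simple b-fibration and the preimage of the lifted target centre is the lifted source centre — lifts $g$ to a b-fibration $g_1:[[0,1]\times M[k];\cF_{\scl}]\to M[k-1;\scl]$. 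Finally, again by Lemma~\ref{GP.350}, $g$ carries each $\cG$-centre $\{0\}\times D_{\FQ}$ diffeomorphically onto $\{0\}\times D_{\FQ'}$, which after the target blow-up is a boundary hypersurface of $M[k-1;\scl]$ (a front face, or the lift of $\{0\}\times M[k-1]$ when $\FQ=\{\{i,k\}\}$); after the $\cF_{\scl}$ blow-up the lift of $\{0\}\times D_{\FQ}$ is a boundary p-submanifold lying in a single boundary hypersurface which $g_1$ carries as a b-fibration onto that hypersurface, so repeated application of Proposition~\ref{GP.117} extends $g_1$ over the remaining $\cG_{\scl}$ blow-ups, yielding the required b-fibration. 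Lemma~\ref{GP.81} then gives the generalized product structure; symmetry is Lemma~\ref{GP.127}, and the structural b-fibrations keep connected fibres since over the interior these are the connected fibres of the $\Pi_k$ and blow-up along connected centres does not disconnect them.

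The hard part will be the bookkeeping in the two iterated blow-ups. For the $\cF_{\scl}$ step one must check that the hypotheses of Proposition~\ref{GP.352} persist after each blow-up, i.e.\ that the preimage under the lifted b-fibration of the lifted next target centre is exactly the lifted next source centre — a commutation of blow-up with $g$, which rests on the p-clean, intersection-closed structure of the diagonals (Theorem~\ref{GP.217}, Theorem~\ref{GP.323}) and their compatibility with $\Pi_k$ (Lemma~\ref{GP.350}). For the $\cG_{\scl}$ step one must, for each $\cG$-centre, pin down the unique boundary hypersurface of $[[0,1]\times M[k];\cF_{\scl}]$ containing its lift and the unique boundary hypersurface of $M[k-1;\scl]$ onto which $g_1$ maps it, and verify the b-fibration property there, so that Proposition~\ref{GP.117} applies.
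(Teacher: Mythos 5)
Your treatment of the b-fibration step is essentially the paper's own argument: reduce to Lemma~\ref{GP.81}, split the centres according to whether $k$ is a singleton, lift $\Id\times\Pi_k$ across the blow-ups of the centres not involving $k$ by matching them with $\cD_{\scl}(k-1)$ via Lemma~\ref{GP.350} and iterated Proposition~\ref{GP.352}, then handle the remaining centres using that each is carried onto a front face of the target (the paper invokes Proposition~\ref{GP.352} here, you invoke Proposition~\ref{GP.117}, which is in fact the tool used at the corresponding stage of Proposition~\ref{GP.174}, so that choice is reasonable). The genuine gap is the p-embedding step, which you dispatch in one sentence by ``repeated use of Lemma~\ref{GP.356}''. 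Lemma~\ref{GP.356} identifies the lift of $[0,1]\times D_{k-1,k}$ under a \emph{single} blow-up; iterating it over all of $\cD_{\scl}(k)$ would, at best, identify the lift with a blow-up of $[0,1]\times M[k-1]$ along the intersections with \emph{every} centre, in the induced order, and only provided p-clean intersection persists at each stage — which is not automatic (the paper warns, right after Definition~\ref{GP.183}, that lifts of a p-clean collection need not be p-clean; one must append $[0,1]\times D_{k-1,k}$ to $\cD_{\scl}(k)$, observe the enlarged collection is still intersection-closed, and use Proposition~\ref{GP.82} and Theorem~\ref{GP.323}).

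More importantly, even granting that, you have not shown that the resulting space is $M[k-1;\scl]$. The centres $\{0\}\times D_{\FQ}$ in which $k-1$ and $k$ lie in \emph{different} blocks of the partition do meet $[0,1]\times D_{k-1,k}$ (in the diagonal of the merged partition), so naively they contribute further blow-ups of the lift, and several distinct centres meet it in the same locus; nothing in your argument controls this. The paper's proof of exactly this point is the decomposition $\cD_{\scl}(k)=\cF_{k-1,k}\sqcup\cG\sqcup\cF$ (same block, different blocks, both singletons), the reordering permitted by Theorem~\ref{GP.323} and its Corollary, and the $R_{k-1,k}$-reflection argument: the intersection of a $\cG$-centre with its reflection lies in $\cF_{k-1,k}$, so after the $\cF_{k-1,k}$ blow-ups the lifts of the $\cG$-centres are disjoint from the reflection-invariant lift of $[0,1]\times D_{k-1,k}$ and are irrelevant near it, while the remaining centres meet the lift exactly in the elements of $\cD_{\scl}(k-1)$, each once; only then does iterating Lemma~\ref{GP.356} give the p-embedding. (A dimension count could replace the reflection — the centre with $k-1,k$ in one block realizing a given element of $\cD_{\scl}(k-1)$ is the smallest centre meeting $D_{k-1,k}$ in that locus, so in a size order it is blown up first and Proposition~\ref{GP.84} then makes the later centres with the same intersection disjoint from the lift — but some such argument must be supplied.) As written, the identification of the lift of $[0,1]\times D_{k-1,k}$ with $M[k-1;\scl]$, i.e.\ the p-embedding required by Lemma~\ref{GP.81}, is unproved.
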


\begin{proof} The collection $\cD$ is invariant under the structural
  diffeomorphisms of $M[k]$ so $\cD_{\scl}$ is invariant under the
  structural diffeomorphism of $[0,1]\times M[k]$ and it follows that these
  lift to diffeomorphisms of $M[k;\scl].$ Thus Lemma~\ref{GP.81} shows that
  it suffices to check that the b-fibrations and p-embeddings
\begin{equation}
\begin{gathered}
\Pi_k:(0,1]\times M[k]\longrightarrow (0,1]\times M[k-1],\\
D_{k-1,k}:(0,1]\times M[k-1]\longrightarrow (0,1]\times M[k],\ k>1,
\end{gathered}
\label{GP.228}\end{equation}
extend to b-fibrations and p-embeddings of the $M[*;\scl].$ 

The decomposition \eqref{GP.349} of the multidiagonals in
$\cD(k)$ induces a corresponding decomposition $\cD_{\scl}$
\begin{equation}
\cD_{\scl}(k)=\cG\sqcup\cR
\label{GP.229}\end{equation}
where $\cG$ consists of the centres `which do not involve $k$',
i.e.\ the $D_{\FP}\times\{0\}$ where $k$ is a singleton and the remaining
elements, in $\cR,$ `do involve $k$' so are of the form
$D_{\FQ}\times\{0\}$ where $k\in\cQ_j$ for some $j.$
Clearly these two collections satisfy the hypotheses of the Corollary to
Theorem~\ref{GP.323} so
\begin{equation}
M[k;\scl]=[[[0,1]\times M[k];\cG];\cR].
\label{GP.230}\end{equation}

From Lemma~~\ref{GP.350} it follows under that the product extension of $\Pi_k$
to $[0,1]\times M[k]$ each $\{0\}\times D_{\FP}$ is the preimage of
$\{0\}\times D_{\FP'}$ corresponding to removing the singleton $k.$ Thus the
under successive blow-ups of the elements $\{0\}\times D_{\FP}$ in $\cG$ and
the corresponding elements $\{0\}\times D_{\FP'}$ forming $\cD_{\scl}(k-1),$
Proposition~\ref{GP.352} applies and shows that $\Pi_k$ lifts to a
b-fibration from $[[0,1]\times M[k];\cG]$ to $M[k;\scl].$ After the
blow-ups of the elements of $\cG$ each element $\{0\}\times D_{\FQ}\in\cR$
is mapped by the lift of $\Pi_k$ onto the boundary hypersurface of
$M[k-1,\scl]$ which is the front face for the blow-up of
$\{0\}\times D_{\FQ'}.$ Thus Proposition~\ref{GP.352} applies to each
blow-up showing that $\Pi_k$ lifts to a b-fibration of $M[k,\scl]$ to
$M[k-1,\scl].$ 

So consider the p-embedding in \eqref{GP.228}. Appending $[0,1]\times
D_{k-1,k}$ to $\cD_{\scl}(k)$ gives a p-clean collection closed under
intersection with $[0,1]\times D_{k-1,k}$ as the largest element, so it
certainly lifts to a p-submanifold of $M[k;\scl].$ To see that it is a
p-embedding of $M[k-1,\scl]$ consider the decomposition the collection of
the centres as
\begin{equation}
\cD_{\scl}(k)=\cF_{k-1,k}\sqcup\cG\sqcup\cF.
\label{GP.261}\end{equation}
Here the diagonals defining $\cF_{k-1,k}$ have $k-1$ and $k$ appearing in
the same set in the partition. The elements of $\cF$ are those in which
both $k-1$ and $k$ are singletons and $\cG$ is the remainder, corresponding
to the partitions where $k-1$ and $k$ appear in different sets of the
partition, but at most one is a singleton. Clearly $\cF_{k-1,k}$ and $\cF$
are closed under intersection and the union of the first two sets in
\eqref{GP.261} is also closed under intersection. So a size order on each
gives an intersection order on the union which is permissible and 
\begin{equation*}
M[k,\scl]=[[[[0,1]\times M[k];\cF_{k-1,k}];\cG];\cF].
\label{GP.355}\end{equation*}

Under the extension of the reflection $R_{k-1,k}$ the elements of
$\cF_{k-1,k}$ and $\cF,$ and $[0,1]\times D_{k-1,k},$ are invariant whereas
each element of $\cG$ is mapped to a different element of $\cG.$ It follows
that under the blow-up of $\cF_{k-1,k}$ in \eqref{GP.355} $R_{k-1,k}$ lifts
to be a diffeomorphism leaving the lift of $[0,1]\times D_{k-1,k}$
invariant and exchanging the lifts of the elements of $\cG.$ Now the
intersection an an elment of $\cG$ and its image under $R_{k-1,k}$ lies in
$\cF_{k-1,k},$ so after this first partial resolution elements of $\cG$ are
disjoint from their images under $R_{k-1,k}$ and hence are disjoint from
the lift of $[0,1]\times D_{k-1,k}.$ Thus in a neighbourhood of the lift of
$[0,1]\times D_{k-1,k}$ the blow-up of the elements of $\cG$ are
irrelevant. Now as the image of the embedding of $[0,1]\times M[k-1],$ each
element of $\cD_{\scl}(k-1)$ is the intersection of a unique element of
either $\cF_{k-1,k}$ or $\cF$ with $[0,1]\times D_{k-1,k}.$ Thus, applying
Lemma~\ref{GP.356} repeatedly shows that the p-embedding in \eqref{GP.228}
lifts to a p-embedding.

Noting that connectedness is preserved throughout, $M[k;\scl]$ is a
connected generalized product.
\end{proof}

\begin{remark}\label{GP.274} If a group $K$ acts as diffeomorphisms on the
  $M[k]$ with the actions intertwined by the strucural maps then it lifts
  to such an action on $M[k;\scl].$
\end{remark}

Next we introduce a simplicial compactification of the translation group
which we identify with a vector space $V,$ its Lie algebra. It is very
closely related to `many-body' spaces in scattering theory. See in
particular the survey by Vasy \cite{MR3098645}. In $V^{k-1}$ consider the
`diagonal' subspaces defined in terms of the same variable denoted $v_j$ in
the $j$th factor
\begin{equation}
D_{j}=\{v_j=0\},\ D_{jl}=\{v_j=v_l\},\ j,\ l=1,\dots,k-1,\ j<l.
\label{GP.263}\end{equation}
These are the images of $V^{k-2}$ under the obvious inclusion maps.

\begin{proposition}\label{GP.264} The p-submanifolds formed from the
intersections of subcollections of the $D_*$ with the sphere forming the
boundary of the radial compactification of $V^{k-1},$ form a p-clean
collection $\cI,$ closed under intersection, and
\begin{equation}
\overline{E}[k]=[\overline{V^{k-1}};\cI]
\label{GP.265}\end{equation}
is a simplicial compactification of the translation group $E.$
\end{proposition}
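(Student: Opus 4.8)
The plan is to identify the open generalized product $E[*]$ of Example~\ref{GP.156} with a family of vector spaces and then invoke Lemma~\ref{GP.81}. First I would fix the identification $E[k]=V^k/V\cong V^{k-1}$ by $[g_1,\dots,g_k]\mapsto(g_1-g_k,\dots,g_{k-1}-g_k)=(v_1,\dots,v_{k-1})$. Under this every structure map of $E[*]$ becomes a \emph{linear} map between the $V^{j-1}$; in particular $\Pi_k(v)=(v_1-v_{k-1},\dots,v_{k-2}-v_{k-1})$, the permutations are linear, and the simple diagonal $D_{i,j}\subset E[k]$ is exactly the subspace $D_j=\{v_j=0\}$ (if $j=k$, after relabelling) or $D_{jl}=\{v_j=v_l\}$ of \eqref{GP.263}. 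Hence the collection of all multidiagonals of $E[k]$ --- which are the intersections of the simple ones (Theorem~\ref{GP.217}, \eqref{GP.222}) --- is precisely the collection of intersections of subcollections of the $D_*$, and this is closed under intersection because the intersection of two partition subspaces is again a partition subspace (or $\{0\}$). For a linear subspace $W\subseteq V^{k-1}$ its closure in $\overline{V^{k-1}}$ meets the boundary sphere $\bbS$ in the great subsphere $\bbS(W)$ of unit vectors of $W$, and $\bbS(W_1)\cap\bbS(W_2)=\bbS(W_1\cap W_2)$, so $\cI$ (the $\bbS(W)$ for $W$ a proper such intersection) is closed under intersection.

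Next I would check that $\cI$ is p-clean, so that by Proposition~\ref{GP.82} the blow-up $\overline{E}[k]=[\overline{V^{k-1}};\cI]$ in a size order is a well-defined compact connected manifold with corners. Away from $\bbS$ this is automatic since affine subspaces are cut out by linear functions; near a boundary point $\omega\in\bbS$, the members of $\cI$ through $\omega$ are the $\bbS(W)$ with $\omega\in W$, and all such $W$ contain the line $\RR\omega$, so fixing a metric and writing $U=\omega^{\perp}$ the functions $\rho=1/\langle v,\omega\rangle$ and $u=\pi_U(v)/\langle v,\omega\rangle\in U$ are adapted coordinates near $\omega$ in which $\bbS=\{\rho=0\}$ locally and $\bbS(W)=\{\rho=0,\ u\in W\cap U\}$, i.e.\ every relevant member of $\cI$ is $\{\rho=0\}$ cut by linear functions of $u$ in a single coordinate system --- which is p-cleanness. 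Since $\cI$ consists of boundary p-submanifolds, $\inn\overline{E}[k]=V^{k-1}=E[k]$, which forms a generalized product; so by Lemma~\ref{GP.81} it remains to extend, for each $k$, the structural diffeomorphisms, the fibration $\Pi_k\colon E[k]\to E[k-1]$ to a b-fibration, and the embedding $D_{k-1,k}\colon E[k-1]\to E[k]$ to a p-embedding. The permutations are linear, hence extend to diffeomorphisms of $\overline{V^{k-1}}$ preserving $\bbS$; they permute the $D_*$ and hence $\cI$, so by Lemma~\ref{GP.278} applied along the iterated blow-up they lift to $\overline{E}[k]$.

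For the embedding $D_{k-1,k}$, whose image is the subspace $D_{k-1}$, I would run verbatim the last part of the proof of Theorem~\ref{GP.227}: decompose $\cI(k)=\cF_{k-1,k}\sqcup\cG\sqcup\cF$ as in \eqref{GP.261}, by whether the slots $k-1,k$ lie in a common block of the partition, are both singletons, or neither. The linear reflection $R_{k-1,k}$ fixes $\overline{D_{k-1}}$ and every member of $\cF_{k-1,k}$ and of $\cF$ and moves the members of $\cG$, so after blowing up $\cF_{k-1,k}$ the members of $\cG$ become disjoint from the lift of $\overline{D_{k-1}}=\overline{V^{k-2}}$ and are irrelevant near it; since each member of $\cI(k-1)$ is the intersection with $\overline{D_{k-1}}$ of a unique member of $\cF_{k-1,k}$ or $\cF$, Lemma~\ref{GP.356} applied repeatedly shows $D_{k-1,k}$ lifts to a p-embedding $\overline{E}[k-1]\hookrightarrow\overline{E}[k]$.

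The substantive step, and the one I expect to be the main obstacle, is the extension of $\Pi_k$. The difficulty is that a linear surjection $V^{k-1}\to V^{k-2}$ is not even a b-map between radial compactifications: $\Pi_k^{*}$ of a boundary defining function of $\overline{V^{k-2}}$ is smooth and nonvanishing of the form $\rho\cdot(\text{positive})$ away from the great subsphere $\bbS(\ker\Pi_k)$ but fails to be smooth there, and $\ker\Pi_k=\{v_1=\dots=v_{k-1}\}$ is one of the diagonal subspaces (the one with $k$ a singleton and all other slots merged), so $\bbS(\ker\Pi_k)\in\cI(k)$. The resolution is the elementary local fact that after the radial blow-up of $\bbS(\ker\Pi_k)$ the map $\Pi_k$ becomes a simple b-fibration onto $\overline{V^{k-2}}$; from there one argues as in the proof of Theorem~\ref{GP.227}, using the analogue of Lemma~\ref{GP.350}: the sphere-sections of the diagonals of $E[k]$ in which $k$ is a singleton are the centres $X_*$ of Proposition~\ref{GP.331} attached to the members of $\cI(k-1)$, while the remaining ones map diffeomorphically, resp.\ onto front faces, so repeated application of Propositions~\ref{GP.352}, \ref{GP.331} and \ref{GP.117} shows $\Pi_k$ lifts to a b-fibration $\overline{E}[k]\to\overline{E}[k-1]$. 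With the three extension properties established, Lemma~\ref{GP.81} gives that the $\overline{E}[k]$ form a generalized product; being compact, connected, and reducing to $E[k]=V^k/V$ in the interior, it is a simplicial compactification of the translation group $V$ in the sense of Definition~\ref{GP.144}.
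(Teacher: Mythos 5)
Your proposal is essentially correct, but it takes a genuinely different route from the paper. The paper does not verify the hypotheses of Lemma~\ref{GP.81} for $\overline{E}[k]$ directly: it applies the semiclassical construction of Theorem~\ref{GP.227} to the spaces $V^k$ themselves and observes that the first centre blown up at level $k$, the $k$-fold diagonal at $\epsilon=0$, has front face naturally $V\times\overline{V^{k-1}}$, that the subsequent centres lift to meet this boundary hypersurface transversally exactly in $V\times\cI$, and hence, by the boundary-restriction result (Proposition~\ref{GP.136}), that the hypersurfaces $V\times\overline{E}[k]$ form a translation-equivariant generalized product; quotienting by the $V$-action then gives \eqref{GP.265} as a simplicial compactification. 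In other words the paper inherits all three extension properties -- symmetry, the b-fibrations $\Pi_k$ and the p-embeddings $D_{k-1,k}$ -- from the semiclassical theorem, whereas you re-derive them from scratch via Lemma~\ref{GP.81}. What your approach buys is self-containedness and explicitness: the linear identification $E[k]\cong V^{k-1}$, the projective-coordinate proof that $\cI$ is p-clean and intersection-closed (which the paper leaves implicit in the phrase ``meet it at precisely the $\cI$''), and the useful observation that the bare linear projection is not even a b-map of radial compactifications and that $\bbS(\ker\Pi_k)$, being of minimal dimension in $\cI$, is blown up first in any size order, after which the projection becomes a b-fibration. What the paper's route buys is that the hardest bookkeeping -- matching the remaining blow-ups upstairs and downstairs so that $\Pi_k$ lifts -- is already done in Theorem~\ref{GP.227} and need not be repeated. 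That step is the one thin spot in your write-up: after the initial blow-up of $\bbS(\ker\Pi_k)$ you appeal to the analogues of Lemma~\ref{GP.350} and Propositions~\ref{GP.352}, \ref{GP.331} and \ref{GP.117} ``as in Theorem~\ref{GP.227}'', and while these are indeed the right tools and the claim is the standard many-body compactification fact, a complete argument would have to check that the p-clean collections produced by Proposition~\ref{GP.331} as preimages of the centres in $\cI(k-1)$ are accounted for by the elements of $\cI(k)$ (including their traces on the front face of the first blow-up), which is exactly the verification the paper's inheritance argument sidesteps.
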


\begin{proof} Consider the semiclassical product constructed from the $V^k$
  as a `generalized' product. The first centre blown up in the construction
  of the space at level $k$ is the $k$-fold diagonal at $\epsilon =0.$ It
  is naturally $V\times \overline{V^{k-1}}$ with the boundary of the
  $k$-fold diagonal being $V\times\{0\}.$ Subsequent blow-ups are
  transversal to the boundary and meet it at precisely the $\cI$ in
  \eqref{GP.265}. It follows that these manifolds 
\begin{equation}
V\times \overline{E}[k]
\label{GP.268}\end{equation}
form a generalized product with all structure maps being invariant under the
translation action through the action on the first factor. Thus
\eqref{GP.265} is indeed a generalized product and satisfies the conditions
of Definition~\ref{GP.144} for the translation group.
\end{proof}

Note that the product action of the group of linear transformation of $L$
on each $V^{k-1}$ lifts to an action on $\overline{E}[k].$

\begin{proposition}\label{GP.266} The boundary generalized product
  associated to $H_{\scl}=\{0\}\times M[1]$ of $M[1,\scl],$ of a semiclassical
    generalized product, fibres over $M[1]$ 
\begin{equation}
\xymatrix{
\overline{E}[k]\ar@{^(->}[r]&H_{\scl}[k]\ar[d]\\
&M[1]
}
\label{GP.267}\end{equation}
with fibres at each point forming the simplicial compactification of the
fibre of the Lie algebroid $EM[1]\longrightarrow M[1]$ as a vector space.
\end{proposition}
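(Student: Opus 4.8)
The plan is to identify $H_{\scl}[k]$ with an explicit boundary hypersurface of $M[k;\scl]$ and to read off the fibration and its fibres directly from the blow-up construction of Definition~\ref{GP.225}. Since $M[1]$ carries no nontrivial (multi)diagonals, $\cD(1)=\emptyset$, so $M[1;\scl]=[0,1]\times M[1]$ and $H_{\scl}=\{0\}\times M[1]$ is canonically $M[1]$; for $k=1$ the statement is then trivial, $\overline{E}[1]$ being a point. For $k\ge2$ the unique diagonal of smallest dimension in $M[k]$ is the $k$-fold diagonal $D_k=D_{\{1,\dots,k\}}$, of dimension $\mu$, so in the size-order blow-up of Proposition~\ref{GP.82} the centre $\{0\}\times D_k$ is blown up first. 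Let $\ff\subset M[k;\scl]$ denote the (iterated proper transform of the) front face of this first blow-up. As in the construction of the p-embedding $D_k$ into $M[k;\scl]$ via Lemma~\ref{GP.81}, the lift of the $k$-fold diagonal meets $\ff$ in one copy of $M[1]$ and meets no other boundary hypersurface, so $H_{\scl}[k]=\ff$ in the sense of Proposition~\ref{GP.136}; composing the blow-down $\beta\colon M[k;\scl]\to[0,1]\times M[k]$, restricted to $\ff$, with the projection to $M[1]$ through any one of the structural b-fibrations $M[k]\to M[1]$ (which all restrict to the same map on $D_k$) exhibits the fibration $H_{\scl}[k]\to M[1]$.

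Next I compute the fibre of $\ff$ before the later blow-ups. The front face of $[[0,1]\times M[k];\{0\}\times D_k]$ is the inward-pointing spherical normal bundle of $\{0\}\times D_k$. Since $D_k$ is an interior p-submanifold of $M[k]$, its normal bundle $ND_k$ is an honest vector bundle over $D_k\cong M[1]$, and the normal bundle of $\{0\}\times D_k$ in $[0,1]\times M[k]$ is $\bbR_{\ge0}\times ND_k$ with a single boundary direction; its inward-pointing spherical bundle is fibrewise the closed hemisphere, i.e.\ the radial compactification of the fibre of $ND_k$. From $D_k=D_{1,2}\cap D_{1,3}\cap\dots\cap D_{1,k}$, transversal by Theorem~\ref{GP.217}, together with $N_DM[2]\cong E$ (Definition~\ref{GP.300}, using that $(\Pi_R)_*$ identifies $D^{*}\nul((\Pi_R)_{*})=E$ with a complement of $TD$), one obtains $ND_k\cong E^{\oplus(k-1)}$ as a bundle over $M[1]$. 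Hence, before the remaining blow-ups, $\ff$ is the bundle over $M[1]$ whose fibre over $p$ is the radial compactification $\overline{E_p^{k-1}}$.

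It remains to run the remaining blow-ups fibrewise and to identify the induced generalized-product structure. By Theorem~\ref{GP.217} every diagonal $D_{\FP}$ of $M[k]$ contains $D_k$, and in the coordinates produced in its proof the $D_{\FP}$ are cut out near $D_k$ by linear equations in the normal variables; identifying the latter fibrewise with $E_p^{k-1}$, the $D_{\FP}$ correspond exactly to the linear subspaces spanned by the $D_j=\{v_j=0\}$ and $D_{jl}=\{v_j=v_l\}$ of Proposition~\ref{GP.264}. Each structural diffeomorphism or b-fibration of $M[k]$ preserves $D_k$, hence lifts to the first blow-up (Lemma~\ref{GP.278}, Proposition~\ref{GP.352}) and acts on $\ff$ only through its linearization along $D_k$, which is the corresponding structure map of the quotient generalized product $E_p^{k}/E_p$ of Example~\ref{GP.156}. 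Consequently, near $\ff$, $M[k;\scl]$ is locally the product of $M[1]$ with a neighbourhood of the front face of the semiclassical compactification built from the translation group $E_p$, and by Lemma~\ref{GP.356} the remaining blow-ups restrict to $\ff$ as the blow-up, in each fibre, of the collection $\cI$ of Proposition~\ref{GP.264}. Therefore $H_{\scl}[k]$ is the bundle over $M[1]$ whose fibre over $p$ is $[\overline{E_p^{k-1}};\cI_p]$, and by Proposition~\ref{GP.264}, whose conclusion is precisely that this space with its reduced structure maps is a simplicial compactification of the translation group $E_p$, this fibre is the simplicial compactification of the fibre $E_p$ of the Lie algebroid $EM[1]$; its compatibility with the Lie algebroid of \S\ref{LA} is the simplicial construction of Example~\ref{GP.156}.

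The main obstacle is the third step: verifying that the structural diffeomorphisms and b-fibrations of $M[k]$ act on the blown-up front face only through their linearizations along $D_k$, so that the induced structure on $H_{\scl}[k]$ is genuinely the fibrewise translation-group structure and not some twist of it. This is a blow-up lifting computation in the spirit of Lemma~\ref{GP.278} and Propositions~\ref{GP.352} and~\ref{GP.331}, combined with the local normal form for the diagonals from Theorem~\ref{GP.217}; granting it, Proposition~\ref{GP.264} supplies the fibrewise statement and the identification with the Lie algebroid is that of \S\ref{LA}.
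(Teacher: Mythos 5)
Your argument is correct and is essentially the paper's own: the paper proves this proposition simply by ``repeating the argument used to prove Proposition~\ref{GP.264}'', i.e.\ the first blow-up of $\{0\}\times D_k$ produces the radially compactified normal bundle, identified as the bundle of $\overline{E_p^{k-1}}$ over $M[1]$, and the subsequent centres trace out exactly the fibrewise collection $\cI$, so the fibres are the spaces $\overline{E}[k]$ of Proposition~\ref{GP.264}. Your write-up is a more detailed version of that same route (including the linearization/equivariance point you flag, which the paper leaves implicit in the phrase ``repeating the argument''), so no change of approach and no substantive gap relative to the paper's proof.
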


\begin{proof} This follows by repeating the argument used above to prove
  Proposition~\ref{GP.264}. 
\end{proof}

\section{Adiabatic stretched product}\label{ASP}

The primary use of a generalized product in \cite{IndBunGer} is for the
construction of the associated algebra of smoothing operators for the
semiclassical product associated by Theorem~\ref{GP.227} to the fibre
products of a fibre bundle of compact manifolds without boundary. In fact
to allow `operators with values in smoothing operators' we proceed to
discuss the more general adiabatic case, which can be thought of as
`partially semiclassical'. For a single manifold $M$ this calculus was
introduced in work with Mazzeo, \cite{MR90m:58004}, corresponding to a
fibration of a compact manifold.

Since we wish to include smooth parameters, consider an iterated fibration 
\begin{equation}
\xymatrix{
Z\ar@{-}[r]&M\ar[d]_{\gamma}\ar@/^11pt/[dd]^{\phi}\\
Q\ar@{-}[r]&F\ar[d]_{\psi}\\
&Y
}
\label{GP.89}\end{equation}
Here we work, at least initially, in the context of connected compact
manifolds without boundary so the maps $\gamma$ and $\psi$ are simply
submersions and $\phi =\psi \circ\gamma.$ The adiabatic algebra corresponds
to degeneration in a parameter which we introduce as for the semiclassical
case by taking the product with a closed interval $[0,1]_\epsilon .$
Then, as for the semiclassical product, consider the fibre products 
\begin{equation}%
[0,1]\times M^{[k]}_{\phi}.
\label{GP.91}\end{equation}

The adiabatic Lie algebroid is the \ci\ module over this space generated
by smooth vector fields on $M$ 
\begin{equation}
V+\epsilon W
\label{GP.282}\end{equation}
where $W$ is tangent to the fibres of $\psi$ and $V$ is tangent to the
fibres of the finer fibration $\phi.$

There are `diagonal' submanifolds of $M^{[k]}_{\phi}$ but with respect to
the map $\gamma.$ Thus $\gamma ^k:M^k\longrightarrow F^k$ restricts to a
fibration which we denote
\begin{equation}
\gamma ^{[k]}:M^{[k]}_{\phi}\longrightarrow F^{[k]}_{\psi}.
\label{GP.269}\end{equation}

If $\mathfrak{P}$ is a partition of $J(k)$ as discussed in \S\ref{MD} then
$D_{\mathfrak{P}}F\subset F^{[k]}_{\psi}$ and we consider the p-clean
collection of submanifolds 
\begin{equation}
\cD_{\ad}=\left\{\{0\}\times(\gamma ^{[k]})^{-1}(D_{\mathfrak{P}}F)\right\}.
\label{GP.270}\end{equation}

\begin{definition}\label{GP.98} The adiabatic manifolds associated
  to an iterated fibration \eqref{GP.89} are the
  spaces, defined by blow-up in size order using
  Proposition~\ref{GP.82},
\begin{equation}
M[k;\ad]=[[0,1]\times M^{[k]}_{\phi};\cD_{\ad}].
\label{GP.99}\end{equation}
\end{definition}

\begin{theorem}\label{GP.101} The adiabatic spaces, $M[k;\ad],$ for any
  iterated fibration of connected compact manifolds \eqref{GP.89}, form a
  regular stretched product with interior $(0,1)\times M^{[k]}_{\phi}.$
\end{theorem}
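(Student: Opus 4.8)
The plan is to verify the three assertions in turn — that the $M[k;\ad]$ form a generalized product, that this generalized product is in fact a stretched product with the stated interior, and that it is regular — using the blow-up machinery of \S\ref{BU} together with Lemma~\ref{GP.81}. First, since the adiabatic centres $\cD_{\ad}$ are pulled back from the multidiagonals $D_{\FP}F\subset F^{[k]}_\psi$ under the fibrations $\gamma^{[k]}$, and since the latter form a p-clean intersection-closed collection by Theorem~\ref{GP.217} applied to the (trivial, fibre-product) generalized product $F^{[k]}_\psi$, Proposition~\ref{GP.128} shows that their preimages, intersected with $\{0\}$, form a p-clean intersection-closed collection in $[0,1]\times M^{[k]}_\phi$; thus $M[k;\ad]$ is well-defined and independent of the (size-order) blow-up by Proposition~\ref{GP.82}. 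The structural diffeomorphisms of $M^{[k]}_\phi$ permute the $D_{\FP}F$ and hence permute $\cD_{\ad}$, so by Lemma~\ref{GP.278} they lift to diffeomorphisms of $M[k;\ad]$; this gives symmetry.

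Next I would check the two remaining hypotheses of Lemma~\ref{GP.81}: that the projections $\Pi_k$ lift to b-fibrations and the diagonal inclusions $D_{k-1,k}$ lift to p-embeddings. The key is that these are precisely the semiclassical-type statements whose proofs were carried out abstractly in Theorem~\ref{GP.227}, with the only change being that the $k$-fold and partial diagonals are replaced by their $\gamma$-relative versions. Concretely: the relevant adiabatic centres decompose exactly as in \eqref{GP.229}, with $\cG$ the centres not involving the index $k$ and $\cR$ those involving it; by Lemma~\ref{GP.350} (applied to $F^{[k]}_\psi$) the $\Pi_k$-preimage of a $\cG$-centre is the corresponding $\cR$-free centre one level up, so Proposition~\ref{GP.352} lets $\Pi_k$ be lifted through the blow-up of $\cG$, and then Proposition~\ref{GP.352} again handles the remaining centres of $\cR$ which map to front faces downstairs. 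For the p-embedding of $D_{k-1,k}$ one uses the three-way decomposition \eqref{GP.261} (again with $\gamma$-relative diagonals), the $R_{k-1,k}$-invariance argument to make the $\cG$-blow-ups irrelevant near the lift of $[0,1]\times D_{k-1,k}$, and then Lemma~\ref{GP.356} repeatedly. Since the interior $[0,1]_\epsilon\ne0$ is untouched by the blow-ups, the interiors of $M[k;\ad]$ are the fibre products $(0,1)\times M^{[k]}_\phi$, so we have a stretched product in the sense of Definition~\ref{GP.135}; connectedness is preserved throughout.

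Finally, regularity: one must show the determinant $\delta$ of the push-forward map $(\Pi_F)_*:\nul((\Pi_S)_*)\to\nul((\Pi_L)_*)$ on $M[3;\ad]$ vanishes only to a fixed integer order at the boundary hypersurfaces. This is a local computation. In the interior it is the (invertible) change of fibre variables for the fibre product, so $\delta\ne0$ there; the only issue is at the adiabatic front faces, where in local blow-up coordinates the map is governed by the vector fields $V+\epsilon W$ and the rescaling by powers of the front-face defining functions is explicit. I expect the verification of regularity to be the main obstacle, not because it is deep but because it requires writing the b-differential of the three structural b-fibrations in the blown-up coordinates near the various corners of $M[3;\ad]$ and reading off the vanishing order of $\delta$; the argument is essentially that each front face contributes a definite power of its defining function to the Jacobian because the adiabatic Lie algebroid rescales the relevant fibre tangent directions uniformly. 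Once the local normal form \eqref{GP.335}–type expansion for $\gamma^{[3]}$ in adiabatic coordinates is in hand, the multiindex $\zeta$ can be computed directly.
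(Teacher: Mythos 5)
Your proposal is correct and follows essentially the same route as the paper: the paper's proof simply runs the argument of Theorem~\ref{GP.227} verbatim with the $\gamma$-fibre diagonals in place of the full diagonals, establishing symmetry by lifting the factor-exchange diffeomorphisms, then the lift of $\Pi_k$ via the decomposition \eqref{GP.229} and the lift of $D_{k-1,k}$ via the argument starting at \eqref{GP.261}, exactly as you propose. Your final paragraph on regularity actually goes beyond the paper, whose proof is silent on that part of the statement, so flagging it as an outstanding (routine) local computation of the vanishing order of the Jacobian at the adiabatic front faces is not a gap relative to the paper's own argument.
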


\begin{proof} The proof of Theorem~\ref{GP.227} can be followed almost
  verbatim. The factor exchange diffeomorphisms of $M^{[k]}_{\phi},$
  extended to $[0,1]\times M^{[k]}_{\phi}$ as the identity on the parameter
  space, interchange the elements of $\cD_{\ad}$ within a given
  dimension. Thus they lift to give the symmetry diffeomorphims of
  $M[k;\ad]$ and Lemma~\ref{GP.81} applies. The projections $\Pi_k$ and
  diagonal embeddings $D_{k-1,k}$ are defined on the unresolved products
  $[0,1]\times M^{[*]}_{\phi}.$ Thus it remains to show that they extend
  from the interior to b-fibrations and p-embeddings of the adiabatic
  resolution. For the lift of $\Pi_k$ \eqref{GP.229} applies but now for
  the fibre diagonals. Similarly that $D_{k-1,k}$ lifts to a p-embedding
  can be seen using the argument starting at \eqref{GP.261} again applied
  to the fibre diagonals which have the same intersection properties.
\end{proof}

It is now straightforward to check that the boundary products for the
adiabatic space associated to \eqref{GP.89} are bundles over $F$ with fibre
at each point the product $E[k]\times M^{[k]}_{\gamma}$ where $E[k]$ is the
simplicial compactification of the fibre of the $T_{\psi}F.$ 

\section{The b-stretched product}\label{BSP}

Next we consider perhaps the most basic stretched product corresponding to
a compact manifold with corners. Since it involves little more effort
consider a fibration again denoted $\phi:M\longrightarrow Y$ but where now
the fibres are compact manifolds with corners. Alternatively the bundle
case can be deduced from the diffeomorphism invariance of the construction
for a single manifold.

As already noted above, the fibre diagonal in $M^{[2]}_{\phi}$ is not a
p-submanifold if the boundary is non-trivial. To resolve this we consider
the `fixed' boundary hypersurface of $M,$ those that are mapped to $Y$ by
$\phi.$ For each such boundary hypersurface $H$ consider the corresponding
boundary faces of $M^{[k]}_{\phi}$ given by the fibre products with
factors either $M$ or $H.$ Thus for each $k$ and each $P\subset J(k)$
consider
\begin{equation}
H_{P,k}=\{(m_1,m_2,\dots,m_k)\in M^{[k]}_{\phi};m_j\in H\Mif
j\in\cP\}.
\label{GP.172}\end{equation}

\begin{lemma}\label{GP.173} The boundary faces
$H_{P,k}\in\cM_{\#(P)}(M^{[k]}_{\phi}),$ for fixed $k,$ all
$P$ and all $\phi$-fixed $H\in\cM_1(M)$ form a p-clean family,
  $\FC_{\bof}(k),$ closed under non-transversal intersection.
\end{lemma}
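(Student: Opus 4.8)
The plan is to reduce the whole statement to a product computation in a local trivialization of $\phi$. First I would fix a point $m=(m_1,\dots,m_k)\in M^{[k]}_{\phi}$, put $y=\phi(m_1)=\dots=\phi(m_k)$, and choose a trivializing neighbourhood $U\ni y$ with $\phi^{-1}(U)\cong U\times Z$, where $Z$ is the model fibre (a manifold with corners). Then $M^{[k]}_{\phi}$ is locally $U\times Z^{k}$, with adapted coordinates consisting of base coordinates $y$ common to all $k$ factors together with, for each $j$, local fibre coordinates $(x^{(j)},w^{(j)})$ near $m_j$ in which the $x^{(j)}_i$ are boundary defining functions for the hypersurfaces of $Z$ through the fibre point of $m_j$. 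A $\phi$-fixed boundary hypersurface $H$ of $M$ restricts over $U$ to $U\times G$ for a single boundary hypersurface $G$ of $Z$, and distinct $\phi$-fixed hypersurfaces restrict to distinct such $G$ (two distinct boundary hypersurfaces of a manifold with corners cannot agree over a non-empty open set, and their defining functions are independent wherever both vanish).

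Next I would read off the local form of the faces. Writing $x^{(j)}_G$ for the local defining function of $H$ in the $j$th factor, one has, near $m$,
\[
H_{P,k}=\{\,x^{(j)}_G=0 : j\in P\,\},
\]
and this face contains $m$ precisely when $m_j\in H$ for all $j\in P$. Thus $H_{P,k}$ is the common zero set of the $\#(P)$ boundary defining functions $x^{(j)}_G$, $j\in P$, which are independent since they lie in distinct factors; hence $H_{P,k}$ is a boundary p-submanifold of codimension $\#(P)$, and it is connected (its projection to $Y$ is a sub-bundle with connected fibres), so $H_{P,k}\in\cM_{\#(P)}(M^{[k]}_{\phi})$. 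Because in these adapted coordinates every member of $\FC_{\bof}(k)$ passing through $m$ is defined by a subset of the boundary coordinates $x^{(j)}_i$ — with no tangential functions entering — the collection is p-clean in the sense of Definition~\ref{GP.292}.

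Finally I would settle closure under non-transversal intersection by a short case analysis of $H_{P,k}\cap H'_{P',k}$. When $H=H'$ one gets $H_{P,k}\cap H_{P',k}=H_{P\cup P',k}\in\FC_{\bof}(k)$; locally the two faces share the defining function $x^{(j)}_G$ for every $j\in P\cap P'$, so they meet non-transversally exactly when $P\cap P'\neq\emptyset$, and in every case the intersection lies back in the family. When $H\neq H'$ the functions $x^{(j)}_G$ ($j\in P$) and $x^{(j)}_{G'}$ ($j\in P'$) are jointly independent (distinct factors, and for a shared index $G$ and $G'$ are transverse within $Z$), so $H_{P,k}$ and $H'_{P',k}$ always meet transversally and need not be retained. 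Hence every non-transversal intersection of members of $\FC_{\bof}(k)$ is again a face $H_{P'',k}$, which is the assertion. The one place to be careful is exactly this dichotomy — that the non-transversal pairs are precisely those with $H=H'$ — since it is the combinatorial core of the argument; the remaining steps are routine bookkeeping with the product coordinates.
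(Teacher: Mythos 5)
Your proposal is correct and follows essentially the same route as the paper: the whole content is the dichotomy that faces coming from distinct $\phi$-fixed hypersurfaces $H\neq H'$ meet transversally (since $H$ and $H'$ do), while for a fixed $H$ the family is closed under intersection via $H_{P,k}\cap H_{Q,k}=H_{P\cup Q,k}$. The local trivialization bookkeeping you add is just an expanded verification of the p-cleanness that the paper leaves implicit, so there is nothing substantive to change.
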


\begin{proof} Elements $H_{P,k}$ and $H'_{P',k}$ for
  distinct hypersurfaces and any subsets $P$ and $P'$
  intersect transversally since $H$ and $H'$ do so. For a fixed $H$ the
  collection is closed under intersection with 
\begin{equation}
H_{P,k}\cap H_{Q,k}=H_{P\cup Q,k}.
\label{GP.187}\end{equation}
\end{proof}

\begin{proposition}\label{GP.174} The manifolds  
\begin{equation}
M[k;\bof]=[M^{[k]}_\phi;\FC_{\bof}(k)]
\label{GP.175}\end{equation}
are well-defined by blowing up all centres in a size order and form a
regular stretched product.
\end{proposition}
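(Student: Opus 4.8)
\emph{Strategy and well-definedness.} The plan is to follow the proof of Theorem~\ref{GP.227} almost verbatim, the only change being that the blown-up centres are now the fixed boundary faces $H_{P,k}$ rather than the zero-sections of diagonals. First I would note that $M[k;\bof]$ is well defined: by Lemma~\ref{GP.173} the family $\FC_{\bof}(k)$ is p-clean and closed under non-transversal intersection, for a single fixed hypersurface $H$ the subfamily $\{H_{P,k}\}_{\emptyset\ne P\subset J(k)}$ is even closed under intersection by \eqref{GP.187}, and the subfamilies for distinct fixed hypersurfaces are mutually transversal and remain so under the blow-ups (Lemma~\ref{GP.184}, Proposition~\ref{GP.84}). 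Resolving one hypersurface's subfamily at a time and applying Proposition~\ref{GP.82} to each then makes every size order permissible and the result independent of the choice; for $k=1$ only boundary hypersurfaces occur, so $M[1;\bof]=M$.

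\emph{Reduction and symmetry.} Since every centre is a boundary face, $\inn(M[k;\bof])=\inn(M^{[k]}_\phi)$, which are exactly the fibre products of the restriction of $\phi$ to interiors; so by Definition~\ref{GP.135} it remains only to show that the $M[k;\bof]$ form a generalized product, for which I would invoke Lemma~\ref{GP.81}. A permutation $\sigma$ permutes factors, hence $S_\sigma(H_{P,k})=H_{\sigma(P),k}$, the family $\FC_{\bof}(k)$ is $\sigma$-invariant, and iterated use of Lemma~\ref{GP.278} lifts the $S_\sigma$ to diffeomorphisms of $M[k;\bof]$. For the fibration $\Pi_k$ coming from $J(k-1)\hookrightarrow J(k)$ I would use the splitting $\FC_{\bof}(k)=\cG\sqcup\cR$ with $\cG=\{H_{P,k};k\notin P\}$ and $\cR=\{H_{Q,k};k\in Q\}$, of the type in the corollary to Theorem~\ref{GP.323}, so that $M[k;\bof]=[[M^{[k]}_\phi;\cG];\cR]$. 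Since $\Pi_k^{-1}(H_{P,k-1})=H_{P,k}$ for $k\notin P$, the family $\cG$ is the $\Pi_k$-preimage of $\FC_{\bof}(k-1)$ and Proposition~\ref{GP.352} lifts $\Pi_k$ to a b-fibration over $M[k-1;\bof]$; thereafter each $H_{Q,k}\in\cR$ is carried onto the front face from blowing up $H_{Q\setminus\{k\},k-1}$ (as in Lemma~\ref{GP.350}), and Proposition~\ref{GP.352} applies once more to give the b-fibration $\Pi_k:M[k;\bof]\to M[k-1;\bof]$.

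\emph{The diagonal and regularity.} For the embedding $D_{1,2}$, equivalently $D_{k-1,k}$, I would split $\FC_{\bof}(k)=\cF_{k-1,k}\sqcup\cG\sqcup\cF$ according as $\{k-1,k\}\subset P$, exactly one of $k-1,k$ lies in $P$, or neither does. The reflection $R_{k-1,k}$ fixes the elements of $\cF_{k-1,k}$ and $\cF$ and exchanges those of $\cG$ in pairs whose intersections lie in $\cF_{k-1,k}$; so, by Proposition~\ref{GP.84}, after blowing up $\cF_{k-1,k}$ the elements of $\cG$ are pairwise disjoint and disjoint from the lift of $D_{k-1,k}$, hence irrelevant near its image. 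As each element of $\FC_{\bof}(k-1)$ is the intersection of a unique element of $\cF_{k-1,k}$ or $\cF$ with $D_{k-1,k}$, repeated use of Lemma~\ref{GP.356} exhibits $D_{k-1,k}$ as a p-embedding of $M[k-1;\bof]$; connectedness is preserved throughout, so the $M[k;\bof]$ form a connected stretched product. For regularity (Definition~\ref{GP.279}) I would observe that on the unresolved $M^{[3]}_\phi$ the maps $\Pi_S,\Pi_F,\Pi_C,\Pi_L$ are honest fibrations and $\Pi_R\Pi_S=\Pi_L\Pi_F$, so \eqref{GP.154} is an isomorphism of vector bundles over all of $M^{[3]}_\phi$; tracking the iterated blow-up in adapted coordinates near the front faces, the determinant of \eqref{GP.154} on $M[3;\bof]$ should differ from a nowhere-vanishing smooth section only by a monomial in front-face defining functions with non-negative integer exponents, which is precisely the form \eqref{GP.155} demands.

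\emph{Main obstacle.} The delicate point is the verification, inside the Lemma~\ref{GP.81} reduction, that $\Pi_k^{-1}$ of each centre is again a p-submanifold so that Proposition~\ref{GP.352} can be applied at every stage, and that the partial resolutions ($\cG$ then $\cR$, and $\cF_{k-1,k}$, $\cG$, $\cF$ for the diagonal) interlock correctly; this is exactly where the combinatorics of the decomposition of $\FC_{\bof}(k)$ does all the work, just as in Theorem~\ref{GP.227}. The regularity computation, though absent in the semiclassical case, should be routine since it starts from the genuine fibre-product structure.
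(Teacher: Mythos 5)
Most of your argument runs parallel to the paper's (reduction to a single fixed hypersurface $H$ by transversality, symmetry from invariance of $\FC_{\bof}(k)$ under factor exchange, the splitting $\cG\sqcup\cR$ for $\Pi_k$ with Proposition~\ref{GP.352} handling the $\cG$-stage over $M[k-1;\bof]\times_\phi M$), but your treatment of the diagonal has a genuine gap. Unlike the semiclassical case you are copying, here $D_{k-1,k}$ is \emph{not} a p-submanifold of $M^{[k]}_\phi$ to begin with: the fibre diagonal passes through the corner $H\times_\phi H$ in the last two factors without a common product decomposition (this is the whole reason the b-stretched product exists, and the paper flags it explicitly as the added complication relative to Theorem~\ref{GP.227}). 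Consequently you cannot append $D_{k-1,k}$ to the family of centres as a p-clean collection, and Lemma~\ref{GP.356} is not applicable to it at the outset, since that lemma concerns p-submanifolds meeting p-cleanly. The missing step is the one the paper supplies: within $\cF_{k-1,k}$ one may pass to another intersection order (Theorem~\ref{GP.323}) in which the codimension-two faces $M^{[k-2]}_\phi\times_\phi H\times_\phi H$ are blown up first, and only \emph{after} this blow-up of the corner in the last two factors does (an elementary computation show that) the lift of $D_{k-1,k}$ become a p-submanifold meeting the lifted remaining centres in a collection closed under non-transversal intersection; the reflection argument with $R_{k-1,k}$ and Lemma~\ref{GP.356} can then proceed as you describe. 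Without this reordering and verification your chain of citations does not get off the ground.

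Two smaller points. For the $\cR$-stage of the projection the paper invokes Proposition~\ref{GP.117} (blow-up of a boundary centre that b-fibres onto a boundary hypersurface of the base, the base being unchanged), not Proposition~\ref{GP.352}: since the base $M[k-1;\bof]$ is not blown up further and the lifted centre $H_{Q,k-1}\times_\phi H$ is not the full preimage of the front face of $H_{Q,k-1}$, the hypotheses of \ref{GP.352} are not literally met, so you should cite \ref{GP.117} here (your wording follows the looser phrasing of Theorem~\ref{GP.227}, but in this setting the correct tool is available and is the one the paper uses). On the other hand your sketch of regularity, via the fibre-product structure of $M^{[3]}_\phi$ and tracking monomial factors under the blow-ups, addresses a point the paper's proof leaves implicit and is a reasonable addition.
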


The arguments are similar to those in Theorem~\ref{GP.227} in this
combinatorially simpler setting where partitions of $J(k)$ are replaced by
subsets but complicated by the fact that diagonals $D_{k-1,k}$ are not initially
p-submanifolds.

\begin{proof} At each level, in terms of dimension, of blow up in a
  size order the intersection of non-transversal elements has already
  been blown up, so all elements in \eqref{GP.175} are transversal (or
  disjoint) and hence can be reordered freely. It follows that $M[k;\bof]$
  is symmetric, since exchanging factors amounts to reordering with
  $\#(P)$ fixed. Thus Lemma~\ref{GP.81} applies and to see that
  this is a stretched product we only need consider the existence of the
  stretched projection $\Pi_k$ from $M[k;\bof]$ to $M[k-1;\bof],$ for each
  $k>1,$ corresponding to omission of the last factor and the existence of
  the p-embedding of $M[k-1;\bof]$ in $M[k;\bof]$ as the diagonal in the
  last two factors.

The transversality of any two centres involving different boundary
hypersurfaces means, using Lemma~\ref{GP.83}, that we may freely change
the order of blow-up between different $H.$ This effectively reduces the
discussion to the case of a manifold with boundary, proceeding over some
ordering of the (fixed) boundary hypersurfaces of $M.$

We may divide the p-clean collection as
\begin{equation}
\FC_{\bof}(k)=\cG\sqcup \cR
\label{GP.176}\end{equation}
where the first set corresponds to the $P$ which do not contain $k$
and the second to those which do. So each of these subsets is labeled by
the $Q\subset J(k-1)$ either by inclusion or by addition
of $k.$ Thus
\begin{equation}
\cG\ni C'=H_{Q,k-1}\times _\phi M,\
\cR\ni C''=H_{Q,k-1}\times _\phi H
\label{GP.177}\end{equation}
for the fixed hypersurface depending on the set. Clearly, from
\eqref{GP.187}, $\cG$ and $\cR$ are separately closed under
non-transversal intersection and the non-transversal intersection of an
element of $\cG$ with one in $\cR$ lies in $\cR$ and of smaller dimension
than either. Thus Proposition~\ref{GP.187} applies separately for each $H$
and
\begin{equation}
M[k;\bof]=[[M^{[k]}_\phi;\cG];\cR].
\label{GP.283}\end{equation}

Since they do not involve the $k$th factor the iterated blow up of $\cG$ maps
through the fibre product
\begin{equation*}
[M^{[k]}_\phi;\cG]=M[k-1;\bof]\times _\phi M.
\label{GP.284}\end{equation*}
Moreover Proposition~\ref{GP.352} applies to $\Pi_k$ at each step so it lifts to
the b-fibration 
\begin{equation}
\Pi_k:[M^{[k]}_\phi;\cG]\longrightarrow M[k-1;\bof].
\label{GP.285}\end{equation}

Proposition~\ref{GP.117} applies to all the successive blow-ups of the
centres in $\cR$ so $\Pi_k,$ and hence all the projections
$M[k,\bof]\longrightarrow M[k-1,\bof]$ exist and are b-fibrations.

To see the existence of the p-embedding $D_{k-1,k}:M[k-1,\bof]\longrightarrow
M[k,\bof]$ consider the collection $\cB_{k-1,k}$ of those boundary faces of
the form
\begin{equation}
B\times_\phi H\times _\phi H
\label{GP.287}\end{equation}
with $B$ corresponding to the same $H.$ Similarly let $\cG$ be the
collection of the form \eqref{GP.287} where one of the last two fibre
factors is $M$ and the other is $H$ and finally let $\cB$ be the collection
where the last two fibre factors are $M.$ Thus
\begin{equation}
\FC_{\bo}(k)=\cB_{k-1,k}\sqcup\cG\sqcup\cB.
\label{GP.288}\end{equation}
A size order on each collection in order is an intersection order so 
Proposition~\ref{GP.182} applies.

Moreover we can change the order on $\cB_{k-1,k}$ by placing the
$M^{[k-2]}_{\phi}\times_\phi H\times_\phi H$ first and still have an
intersection order. An elementary calculation shows that after the blow up of
these (transversal) factors the diagonal $D_{k-1,k}$ becomes a
p-submanifold. The remaining elements, which the lift of $D_{k-1,k}$ form a
collection closed under non-transversal intersection and it follows that
$D_{k-1,k}$ lifts to a p-submanifold of $M[k,\bof].$

The reflection $R_{k-1,k}$ is a diffeomorphism leaving elements of the
first and last collections in \eqref{GP.288} fixed and maping each element
of the second collection to a different element. The intersection of each
element of $\cG$ with its image under the reflection lie in $\cB_{k-1,k},$
so after these faces are blown up all such pairs are disjoint. Since
$D_{k-1,k}$ is the fixed set of $R_{k-1,k}$ the blow up of the elements of
$\cG$ do not affect the lift of $D_{k-1,k}.$ As in the proof of
Theorem~\ref{GP.227} it follows that the the lift of $D_{k-1,k}$ is naturally
diffeomorphic to $M[k-1,\bof].$

Thus the$ M[k;\bof]$ form a stretched product, compactifying the fibration
restricted to the union of the interiors of the fibres of $\phi.$
\end{proof}

The b-calculus, which is the operator algebra arising from the generalized
product, is closely related to the positive real numbers as a
multiplicative group. We compactify $(0,\infty)$ using the map
\begin{equation}
G_+=(0,\infty )_t\hookrightarrow I=[-1,1]_s\Mwhere s=(t-1)/(t+1)).
\label{GP.233}\end{equation}
Then we have constructed the stretched product $I[k;\bo].$ 

\begin{proposition}\label{GP.197} The diagonal boundary hypersurfaces of
  $I[k;\bo]$ associated to $\{1\}$ as a boundary `hypersurface' of $I$ give
  a simplicial compactification, $G_+[k],$ of the multiplicative group of the
  positive real numbers and for a bundle of compact
  manifold with corners $M$ the generalized product associated to a fixed
  boundary $H$ of $M$ in the b-stretched product is diffeomorphic to
\begin{equation}
H[k;\bof]\times \overline{G_+}[k],\ G_+=(0,\infty).
\label{GP.198}\end{equation}
\end{proposition}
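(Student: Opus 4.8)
\emph{Plan.} Both assertions are local near the fixed hypersurface $H$, the first being the case $M=I$, $\phi\colon I\to\{\pt\}$, $H=\{1\}$, where $H[k;\bof]=\{\pt\}$ and $I$ near $\{1\}$ is a half–line. Since $H$ is $\phi$–fixed, $\phi|_H\colon H\to Y$ is again a bundle of compact manifolds with corners, and near $H$ there is a collar $H\times[0,\epsilon)_x\hookrightarrow M$ in which $x$ is a fibre coordinate for $\phi$; hence in a neighbourhood of the corner of $M^{[k]}_\phi$ where all $k$ factors lie in $H$,
\[
M^{[k]}_\phi\;\cong\;H^{[k]}_{\phi|_H}\times[0,\epsilon)^k .
\]
First I would check that under this identification the collection $\FC_{\bof}(k)$ for $M$ splits as $\FC_{\bof}(k)=\cG\sqcup\cR$, where $\cG=\{A\times[0,\epsilon)^k:A\in\FC_{\bof}(k)\text{ for }\phi|_H\}$ collects the centres not meeting $H$ (a face $H''_{P,k}$ for a hypersurface $H''\neq H$ meeting $H$ restricts, near $H$, to $(H''\cap H)_{P,k}\times[0,\epsilon)^k$ with $H''\cap H$ viewed as a fixed hypersurface of $H$), while $\cR=\{H_{P,k}\}=\{H^{[k]}_{\phi|_H}\times\{x_j=0:j\in P\}\}$ collects the centres meeting $H$; the key point is that $\FC_{\bof}(k)$ contains no ``mixed'' face $\{m_j\in H,\ m_l\in H''\}$, since such an intersection of two of the $H_{*,*}$ is transversal.

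Next I would factor the blow–up. Both $\cG$ and $\cR$ are closed under non–transversal intersection, and the intersection of a $\cG$–face with an $\cR$–face is transversal, so ``all of $\cG$ in a size order, then all of $\cR$ in a size order'' is an intersection order; by Theorem~\ref{GP.323} it is permissible and equivalent to a size order, giving, near $H$,
\[
M[k;\bof]\;\cong\;\bigl[\,[M^{[k]}_\phi;\cG]\,;\,\cR\,\bigr].
\]
Because every centre in $\cG$ has the form $(\text{submanifold of }H^{[k]}_{\phi|_H})\times[0,\epsilon)^k$, the first resolution is a product, $[M^{[k]}_\phi;\cG]\cong H[k;\bof]\times[0,\epsilon)^k$; the lifts of $\cR$ are then $H[k;\bof]\times\{x_j=0:j\in P\}$, and blowing these up is again a product, whence, near $H$,
\[
M[k;\bof]\;\cong\;H[k;\bof]\times\widehat{\mathbb{G}}[k],\qquad
\widehat{\mathbb{G}}[k]:=\bigl[[0,\epsilon)^k;\{\text{coordinate faces of codimension}\ge2\}\bigr],
\]
using only the elementary identities $[X\times Y;A\times Y]=[X;A]\times Y$ and $[X\times Y;X\times C]=X\times[Y;C]$.

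Third I would pin down the relevant boundary hypersurface. In this model the $k$–fold diagonal $D_k$ of $M[k;\bof]$ is $D_k(H[k;\bof])$ times the lift of the ray $\{x_1=\dots=x_k\}$, and that ray meets $\widehat{\mathbb{G}}[k]$ only in the front face $\mathfrak f$ of the blow–up of the origin, at the barycentre of the simplex $\bbS^{k-1}_+$, an interior point; so $\mathfrak f$ is the unique boundary hypersurface of $\widehat{\mathbb{G}}[k]$ meeting $D_k$, and by the discussion after Proposition~\ref{GP.136} one gets $H[k]=H[k;\bof]\times\mathfrak f$ (globally, as all of $H[k]$ is the lift of the front face of the centre $H^{[k]}_{\phi|_H}$, which sits over the collar of $H$). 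Finally I would verify that $\overline{G_+}[k]:=\mathfrak f$ is a simplicial compactification of $G_+$ in the sense of Definition~\ref{GP.144}: its interior is the interior of $\bbS^{k-1}_+$, i.e. $(0,\infty)^{k-1}$; the diagonal scaling action of $G_+$ on $[0,\epsilon)^k$ is equivariant for the fibre–product generalized–product structure of Example~\ref{GP.294}, fixes the origin, and realizes $\mathfrak f$ over the origin as $[0,\epsilon)^k/G_+$, so the structure maps induced on its interior are exactly the quotient maps of $G_+^k/G_+$ in \eqref{GP.138}, i.e. those of Example~\ref{GP.156} (equivalently, a short coordinate computation shows $\Pi_k$, $D_{1,2}$ and the reflections restrict to $\mathfrak f$ as in Example~\ref{GP.156}, and Lemma~\ref{GP.81} finishes it). For $k=2$ this gives a closed interval and for $k=3$ a hexagon. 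Specializing to $M=I$, $H=\{1\}$ then proves the first assertion, that the diagonal boundary hypersurfaces of $I[k;\bo]$ at $\{1\}$ are precisely $\overline{G_+}[k]$.

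The main obstacle is the second step: showing that the size–order resolution defining $M[k;\bof]$ may be rearranged so that all centres not meeting $H$ are blown up before all centres meeting $H$. This rests entirely on the structural observation of the first step --- that the centres meeting $H$ are only the pure faces $H_{P,k}$, each a full $H$–factor times a coordinate face of $[0,\epsilon)^k$ --- which simultaneously makes the reordering an intersection order (so Theorem~\ref{GP.323} applies, despite $\FC_{\bof}(k)$ being closed only under non–transversal intersections, the cross–intersections being transversal) and makes each of the two resulting blow–ups an honest product blow–up. Once that product decomposition is in hand, identifying $\mathfrak f$ with $\overline{G_+}[k]$ and checking it is a simplicial compactification are routine, the latter being in essence a restatement of Example~\ref{GP.156}.
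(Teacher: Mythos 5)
Your argument is correct, and it supplies essentially everything the paper leaves unsaid: the printed proof consists of two sentences which, for $k=2$ only, identify the diagonal boundary hypersurface of $M[2;\bof]$ as $H\times[-1,1]_s$ and read the interval as the compactification of the multiplicative group. Your route is the natural completion of that sketch, using the paper's own toolkit: a fibrewise collar identifying a neighbourhood of the all-$H$ corner of $M^{[k]}_\phi$ with $H^{[k]}_{\phi|_H}\times[0,\epsilon)^k$; the split of $\FC_{\bof}(k)$ into the centres coming from the other fixed hypersurfaces and the faces $H_{P,k},$ so that the resolution factors and, near that corner, $M[k;\bof]\cong H[k;\bof]\times\bigl[[0,\epsilon)^k;\{\text{coordinate faces of codimension}\ge2\}\bigr]$; and the observation that the lifted diagonal meets the boundary only in the (iterated) front face of the corner blow-up, whose interior is $(0,\infty)^k$ modulo diagonal scaling, i.e.\ $G_+^k/G_+$ with the structure maps of Example~\ref{GP.156}. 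This yields both statements at once (the first by taking $M=I,$ $H=\{1\},$ $H[k;\bof]=\{\pt\}$), and it makes explicit why the multiplicative rather than the additive structure appears and why the diffeomorphism in \eqref{GP.198} depends only on a choice of boundary defining function for $H.$ One point of hygiene: Theorem~\ref{GP.323} and the Corollary following it are stated for collections closed under \emph{all} intersections, whereas $\FC_{\bof}(k),$ and your $\cG$ versus $\cR$ splitting, are closed only under non-transversal intersection, the cross-intersections being transversal and absent from the collection; the clean justification of your reordering is repeated use of Lemma~\ref{GP.184} (transversal or disjoint centres may be commuted), which is exactly how the paper handles the same issue in the proof of Proposition~\ref{GP.174}. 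With that substitution the factorization, and hence your proof, is complete.
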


\noindent The freedom in the diffeomorphism to \eqref{GP.198} arises from a
choice of boundary defining function for $H.$ 

\begin{proof} 
The boundary generalized products are easily identified. Namely, if $H$ is
a fixed boundary hypersurface of the total space $M=M[1,\bof]$ then the
diagonal boundary hypersurfaces of $M[2;\bof]$ are $H\times[-1,1]_s$ where
the interval can be identified as the compactification of the
multiplicative group 
\end{proof}

Theorem~\ref{GP.227} immediately yields a semiclassical version of the
b-stretched product; the algebra of smoothing operators associated to this
is used (in a rather minor way) in \cite{IndBunGer} and the
pseudodifferential operators much more substantially (and independently of
this discussion) by Hintz in \cite{MR4515441}.

\section{Double semiclassical product}\label{DSST}

Now we return to an iterated fibration as in \eqref{GP.89} and proceed to
define the `doubly semiclassical product' involving two semiclassical
parameters corresponding to the parameter square
\begin{equation}
\Delta=[0,1]_{\epsilon }\times[0,1]_{\delta }.
\label{SCL.28}\end{equation}

The idea is to reproduce the adiabatic product for the iterated fibration
on $\epsilon=1$ with $\delta$ as parameter, the semiclassical product for
$\phi$ on $\delta =1,$ with $\epsilon$ as parameter and a pulled-back
version of the semiclassical product for $\gamma$ on $\delta =0$ with
$\epsilon $ as parameter. The single space is
\begin{equation}
M[1;\dscl]=\Delta \times M.
\label{GP.236}\end{equation}

The Lie algebroid which is resolved by the double semiclassical product
consists of the (\ci\ module generated by the) smooth vector fields on
$\Delta \times M$ 
\begin{equation}
\epsilon V+\epsilon \delta W
\label{GP.281}\end{equation}
where $V$ and $W$ are smooth vector fields on $M$ respectively tangent to
the fibres of $\phi$ and to the fibres of $\gamma.$

We first consider the semiclassical stretched product for $\phi$ in
\eqref{GP.89} with $\epsilon$ as parameter, as constructed in \S\ref{ASP}
\begin{equation}
M[k;\scl\phi]=[[0,1]\times M^{[k]}_\phi ;\{0\}\times \FC_{\phi}]
\label{GP.280}\end{equation}
where $\FC_\phi$ is the collection of diagonals with respect to the
composite fibration. The idea then is to add the parameer $\delta \in[0,1]$
and perform adiabatic resolution over the top of the semiclassical
resolution.

Recall that the products of $\gamma :M\longrightarrow F$ lift to fibrations
$\gamma ^{[k]}:M^{[k]}_\phi \longrightarrow F^{[k]}_\psi$ and the adiabatic
resolution arises from the fibre diagonals for $\gamma,$ meaning the inverse
images under these maps of the diagonals in $F^{[k]}_\psi.$

\begin{lemma}\label{GP.253} For an iterated fibration, under the blow
  up in \eqref{GP.280} the fibre diagonals for $\gamma $ lift from
  $[0,1]\times M^{[k]}_\phi$ to a p-clean collection in $M[k,\scl\phi].$
\end{lemma}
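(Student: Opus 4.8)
The plan is to enlarge the family one actually wants to blow up to its closure under intersection, and then run the iterated blow-up machinery of \S\ref{BU}.

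Write $G_{\FP}=(\gamma^{[k]})^{-1}(D_{\FP}F)\subset M^{[k]}_\phi$ for the fibre diagonal of $\gamma$ attached to a partition $\FP$ of $J(k)$, so the submanifold to be lifted is $[0,1]_\epsilon\times G_{\FP}\subset[0,1]_\epsilon\times M^{[k]}_\phi$. Since $\gamma^{[k]}$ is a fibration and the $D_{\FP}F$ form a p-clean collection closed under intersection in $F^{[k]}_\psi$ (Theorem~\ref{GP.217} for the fibre products of $\psi$), Proposition~\ref{GP.128} gives that the $G_{\FP}$ are p-submanifolds with $G_{\FP}\cap G_{\FQ}=G_{\FP\vee\FQ}$, the block-merge. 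In a local trivialization of \eqref{GP.89}, so that $M^{[k]}_\phi=Y\times(Q\times Z)^k$ with $\gamma^{[k]}$ the projection off the $Z$-factors, we have $D_{\FP}=\{q_i=q_j,\ z_i=z_j:\ i\sim_\FP j\}$ and $G_{\FP}=\{q_i=q_j:\ i\sim_\FP j\}$, so $D_{\FP}\subset G_{\FQ}$ exactly when $\FP$ refines $\FQ$. In particular the naive family $\{\{0\}_\epsilon\times D_{\FP}\}\cup\{[0,1]_\epsilon\times G_{\FP}\}$ is \emph{not} closed under intersection: $\{0\}_\epsilon\times D_{\FP}\cap([0,1]_\epsilon\times G_{\FQ})$ is the $\epsilon=0$ slice of a \emph{partial diagonal}
\begin{equation*}
D_{\FP,\FR}=\{q_i=q_j\ (i\sim_\FR j),\quad z_i=z_j\ (i\sim_\FP j)\},\qquad \FP\ \text{refining}\ \FR,
\end{equation*}
with $\FR=\FP\vee\FQ$; here $D_{\FP,\FP}=D_{\FP}$, while $D_{\FP,\FR}=G_{\FR}$ when $\FP$ is the discrete partition. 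These partial diagonals are intrinsic — being exactly the iterated intersections of the $\{0\}_\epsilon\times D_{\FP}$ with the $[0,1]_\epsilon\times G_{\FP}$ — they are linear in the fibre variables, they all contain the total $\epsilon=0$ diagonal (so pairwise intersections within the family are nonempty), and they satisfy $D_{\FP,\FR}\cap D_{\FP',\FR'}=D_{\FP\vee\FP',\FR\vee\FR'}$.

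I would then take
\begin{equation*}
\cW=\bigl\{\{0\}_\epsilon\times D_{\FP,\FR}\bigr\}\ \cup\ \bigl\{[0,1]_\epsilon\times G_{\FP}\bigr\},
\end{equation*}
the first over all $\FP$ refining $\FR$ (read as the cylinder $[0,1]_\epsilon\times G_{\FR}$ when $\FP$ is discrete), the second over all $\FP$ with a block of size $\ge 2$. Locally each member is cut out by the single boundary defining function $\epsilon$ together with linear functions of the remaining coordinates, so $\cW$ is p-clean; the intersection rules above make $\cW$ closed under intersection; and its subfamily $\{0\}_\epsilon\times\FC_\phi=\{\{0\}_\epsilon\times D_{\FP}\}$ is itself closed under intersection, so by Proposition~\ref{GP.82} blowing it up in a size order is permissible and produces exactly $M[k;\scl\phi]$. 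The decisive point is that the order ``blow up all $\{0\}_\epsilon\times D_{\FP}$ first in a size order, then all remaining members of $\cW$ in a size order'' is an intersection order on $\cW$: an intersection of two $\{0\}_\epsilon\times D_{\FP}$'s is again such a diagonal of no larger dimension, while any intersection involving a partial diagonal with $\FP\neq\FR$ or a cylinder $[0,1]_\epsilon\times G_{\FP}$ lies in $\cW$ with dimension at most that of the later of the two factors (using $D_{\FP,\FR}\subset G_{\FR}\subset G_{\FQ}$ for the comparison). By Theorem~\ref{GP.323} this order is permissible, and the local polar-coordinate analysis of the proof of Proposition~\ref{GP.82}, applied to the sub-blow-up of the $\{0\}_\epsilon\times D_{\FP}$ inside the p-clean, intersection-closed family $\cW$, then shows that after the first block of blow-ups, i.e.\ in $M[k;\scl\phi]$, the remaining members of $\cW$ — in particular the fibre diagonals $[0,1]_\epsilon\times G_{\FP}$ — lift to a p-clean collection of p-submanifolds.

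The step I expect to cost the most is this combinatorial bookkeeping of the partial diagonals $D_{\FP,\FR}$: verifying that the closure under intersection of the centres of interest is exactly exhausted by them, that this closure $\cW$ remains p-clean, and that the two-block ordering above is genuinely an intersection order. Once that is settled, permissibility and the p-cleanness of the lifts are exactly of the kind already established in \S\ref{BU} (and used in the proof of Theorem~\ref{GP.227}).
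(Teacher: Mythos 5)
Your set-up is sensible and close in spirit to the paper's: you pass to the intersection closure, and your ``partial diagonals'' $D_{\FP,\FR}$ (linear in the base differences for $\FR$ and in the fibre differences for $\FP$) are exactly the objects the paper's local analysis manipulates. The two-block order you describe is indeed an intersection order, so Theorem~\ref{GP.323} does give permissibility and equivalence of the \emph{final} resolutions. The gap is in the last step: neither permissibility of an intersection order nor the argument of Proposition~\ref{GP.82} yields that the lifts of the \emph{remaining} elements of $\cW$ are p-clean at the \emph{intermediate} stage $M[k;\scl\phi]$. The induction in Proposition~\ref{GP.82} hinges on the centre being of minimal dimension among all remaining elements (that is what forces any remaining element through a point of the centre to contain the centre locally), and your two-block order is not a size order on $\cW$: a partial diagonal $\{0\}\times D_{\FP,\FR}$ with $\FP\neq\FR$ can have strictly smaller dimension than full diagonals $\{0\}\times D_{\FQ}$ blown up after it would have been reached in a size order, and when $\{0\}\times D_{\FQ}$ is blown up there remain cylinders $[0,1]\times G_{\FP}$ and partial diagonals meeting it in proper, not-yet-blown-up submanifolds. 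This is precisely the configuration in which intermediate p-cleanness can fail, and the paper's own example \eqref{GP.324} (three lines and their common point, blown up in an intersection order) shows that ``p-clean, intersection-closed, intersection order'' is not enough: after the first blow-up the remaining lifts there are not p-clean, even though the order is permissible and the final space is the standard one.

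What rescues the lemma is not the combinatorics of $\cW$ but the special shape of the centres relative to the $\gamma$-fibre diagonals: the $D^\psi_{\FP}$ are cut out by linear equations in base-difference coordinates only (no $\epsilon$, no fibre variables), while each semiclassical centre is cut out by $\epsilon=0$, the same base equations, and additional independent fibre equations. The paper's proof is an induction over the size order of the $\{0\}\times D^\phi_{\FP}$ carrying exactly this as the inductive hypothesis (the lifted $D^\psi$'s given by linear equations in interior coordinates $U'$, the remaining centres by the same equations plus linear equations in independent $Y'$), verified in projective coordinates on each new front face. Your proposal never checks that this structure persists under the blow-ups, and without it the citation of Proposition~\ref{GP.82}/Theorem~\ref{GP.323} does not close the argument; so as written the proof has a genuine gap at its decisive step, and filling it essentially requires the local coordinate induction that the paper carries out.
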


\begin{proof} For simplicity we will generally denote the
  $\psi$-diagonals $(\gamma ^{[k]})^{-1}(D^\psi_{\mathfrak{P}})$ as
  $D^\psi_{\mathfrak{P}}\in\cD_\psi.$ To see that these lift to a p-clean
  collection in $M[k;\scl\phi]$ we examine the local structure of the full
  collection
\begin{equation}
\{0\}\times \FC_{\phi}\cup [0,1]\times D^\psi_{\mathfrak{P}}\Min [0,1]\times M^{[k]}_{\phi}
\label{16.8.2023.2}\end{equation}
based on the relationships that for each partition $\mathfrak{P}$ 
\begin{equation}
D^\psi_{\mathfrak{P}}=(\gamma
^{[k]})^{-1}(D_{\mathfrak{P}}),\ D_{\mathfrak{P}}\subset [0,1]\times
F^{[k]}_\psi,\ (\gamma^{[k]})(\{0\}\times D^\phi_{\mathfrak{P}})=\{0\}\times D^\psi_{\mathfrak{P}}.
\label{16.8.2023.3}\end{equation}

Consider first a point in the minimal diagonal $\{0\}\times D^\phi_{J(k)}.$
This is a point $(m,\dots,m).$ We may therefore take local coordinates in
$F$ near $\psi (m)$ and local coordinates in the fibre so that in terms of
the differences, $u_i$ and $y_i,$ 
\begin{multline}
\{0\}\times D^\phi_{J(k)}=\{\delta =0,\ u_1=\dots=u_{k-1}=0=y_1=\dots=y_{k-1}\},\\
D^\psi_{J(k)}=\{u_1=\dots=u_{k-1}=0\}.
\label{16.8.2023.4}\end{multline}
The other diagonals through this point then take a similar form 
\begin{multline}
\{0\}\times D^\phi_{\mathfrak{P}}=\{\delta =0,\ L_{\mathfrak{P}}(u_1,\dots,u_{k-1})=0=L_{\mathfrak{P}}(y_1,\dots,y_{k-1})\},\\
D^\psi_{\mathfrak{P}}=\{L_{\mathfrak{P}}(u_1,\dots,u_{k-1})=0\}
\label{16.8.2023.5}\end{multline}
where the collection, $L_{\mathfrak{P}}$ of linear functions of $k-1$ variables only depends on $\mathfrak{P}.$

Now consider the blow up of $\{0\}\times D^\phi_{J(k)}.$ This introduces a
new boundary hypersurface. None of the other manifolds is contained in this
centre so each lifts to the closure of the complement of the centre. Near
the interior of the front face we may use the projective coordinates
$U=u/\delta,$ $Y=y/\delta,$ $\delta \ge0$ and some more lifted
functions. All the other diagonals $\{0\}\times D^\phi_{\mathfrak{P}}$ lift
into the proper transform of $H_\delta$ so do not meet the interior. The
lifts of the interior p-submanifolds are the 
\begin{equation}
D^\psi_{\mathfrak{P}}=\{L_{\mathfrak{P}}(U_1,\dots,U_{k-1})=0\}
\label{16.8.2023.6}\end{equation}
so form a p-clean family. It remains to analyse a neighbourhood of a
boundary point of the front face. Here one of the functions $u$ or $y$
dominates the others and $\delta .$ Suppose one of the components of one of
the $u_i$ dominates. We can make a linear change of coordinates, the same
in the $u_i$ and $y_i$ (so the same for all components) and apply a
symmetry so that the first component of $u_1$ denoted $v$ is dominant. Then
$v,$ $\eta =\delta /v,$ $U_i=u_i/v$ and $Y_i=y_i/v$ become coordinates
(with the first component of $U_1$ being identically one). In these
coordinates (which do not necessarily vanish at the basepoint), the lifts
satisfy the affine equations
\begin{multline}
\beta ^{\#}(D^\psi_{\mathfrak{P}})=\{L_{\mathfrak{P}}(U_1,\dots,U_{k-1})=0\},\\
\beta ^{\#}(\{0\}\times D^\phi_{\mathfrak{P}})=\{\eta =0,\ L_{\mathfrak{P}}(U_1,\dots,U_{k-1})=0=L_{\mathfrak{P}}(U_1,\dots,U_{k-1})=0\}.
\label{16.8.2023.7}\end{multline}

The first lift passes through the chosen base point if and only if the
coordinates satisfy all the affine constraints. Then, normalizing the
coordinates by subtracting the values at the basepoint they
are all given by linear equations in the normalized $U_i.$ So they form a
local p-clean collection of interior p-submanifolds. Next consider which of
the lifts of the $\beta ^{\#}(\{0\}\times D^\phi_{\mathfrak{P}})$ pass
through the current basepoint. They must project onto one of the manifolds
just discussed which does pass through the basepoint but in addition the
$L_{\mathfrak{P}}(Y_1,\dots,Y_{k-1})$ must vanish at the base point. If
this holds these lifts are given by linear equations in terms of the
normalzed $Y_i.$ So in this case we see that the lifted submanifolds
through a give point consist of some collection of the $\beta
^{\#}(D^\psi_{\mathfrak{P}})$ defined by linear constraints
in interior coordinates $U'$ (the collection necessarily closed under
intersection) and some generally smaller collection of the $\beta
^{\#}(\{0\}\times D^\phi_{\mathfrak{P}})$ given by the same linear
constraints in the $U'$ and some extra linear constraints in the $Y'$
variables.

If none of the components of the $u_i$ is dominant then one of the
components of the $y_i$ is instead and becomes a defining function for the
front face and the same conclusion holds for the defining functions.

We will proceed by induction, over a size order on the $\{0\}\times
D^\phi_{\mathfrak{P}}.$ We make the following inductive hypothesis at a
given stage of the blow-up of the $\{0\}\times D^\phi_{\mathfrak{P}}.$
Namely we assume that the $D^\psi_{\mathfrak{Q}}$ lift to a p-clean family
of interior p-submanifolds and near any point in the lift of the boundary
$\delta =0$ the $D^\psi_{\mathfrak{Q}}$ through that point are given by
linear equations in some interior coordinates $U'$ and the $\beta
^{\#}(\{0\}\times D^\phi_{\mathfrak{r}})$ which pass through that point
correspond to a subset of the $\mathfrak{Q}$ which do so and they are
defined by the same linear equations in the $U'$ and some other linear
equations in some independent interior coordinates $Y'.$ Both collections
are manifestly closed under intersection.

The discussion above is the first step but the inductive step is very
similar. Thus we are considering a point, in the lift of $H_{\delta =0},$
in the lift of a minimal (remaining) $\{0\}\times D^\phi_{\mathfrak{P}}.$
By the inductive hypothesis above the lift of $D^\psi_{\mathfrak{P}}$ must
pass through this point but there may also be smaller
$D^\phi_{\mathfrak{Q}}$ which do so. Since the collection is closed under
intersection these must correspond to $\mathfrak{Q}\supset \mathfrak{P}.$
On the other hand there can be (the lifts of) larger
$D^\psi_{\mathfrak{Q}}$ through this point and larger $\{0\}\times
D^\phi_{\mathfrak{Q}}$ through the point corresponding to a subcollection
of the $D^\psi_{\mathfrak{Q}},$ $\mathfrak{Q}\subset\mathfrak{P}.$ So, by
the inductive hypothesis we have local coordinates $U,$ $Y$ ($\eta,$
defining the lift of $H_{\delta =0})$ in which
\begin{equation}
\beta^{\#}(D^\psi_{\mathfrak{P}})=\{L_{\mathfrak{P}}(U)=0\},\
\beta^{\#}(D^\psi_{\mathfrak{P}})=\{L_{\mathfrak{P}}(U)=0,\ l_{\mathfrak{P}}(Y)=0\}
\label{GP.249}\end{equation}
for some linear functions $L$ and $l.$ By relabelling the variables $U$ and
using independence we can suppose that 
\begin{equation}
\beta^{\#}(D^\psi_{\mathfrak{P}})=\{U'=0\},\
\beta^{\#}(D^\psi_{\mathfrak{P}})=\{U'=0,\ Y'=0\}
\label{GP.250}\end{equation}
Here $U=(U',U'')$ is some division of the variables. It follows that the
smaller $D^{\psi}_{\mathfrak{Q}}$ passing through that point must be of
  the form 
\begin{equation}
\beta^{\#}(D^\psi_{\mathfrak{Q}})=\{U'=0,\ L''_{\mathfrak{Q}}(U'')=0\},
\label{GP.251}\end{equation}
by removing any dependence of the additional linear functions $L''$ on
$U'.$ The larger $\psi$ diagonals through the point, and any $\phi$
diagonal which projects onto it and passes through the point must then be of the
form 
\begin{equation}
\beta^{\#}(D^\psi_{\mathfrak{Q}})=\{L_{\mathfrak{Q}}U'=0\},\
\beta^{\#}(\{0\}\times D^\phi_{\mathfrak{Q}})=\{L_{\mathfrak{Q}}U';l_{\mathfrak{Q}}(Y')=0\}.
\label{GP.252}\end{equation}
This allows the discussion above of the initial blow-up to be followed
closely. The variables $U'$ (and $Y''$) are not involved in the blow-up at
all the inductive hypothesis is confirmed after this blow-up so holds in
general and the Lemma is proven.
\end{proof}

Thus if $\FC_\psi$ is the collection of the $\psi$-diagonals pulled back to
$M[k;\scl\phi]$ in this way then they have the same intersection properties as
before and we define the doubly semiclassical space by  
\begin{equation}
M[k;\dscl]=[[0,1]_\delta \times M[k;\scl\phi];\{0\}\times \FC_\psi]
\label{GP.243}\end{equation}
as usual in terms of a size order in each case.

\begin{theorem}\label{GP.244} The doubly-semiclassical spaces
  \eqref{GP.243} form a regular stretched product resolving $\Delta\times
  M^{[k]}_{\phi}$ over the interior, its boundary generalized products are
  $M[k;\scl\phi]$ over $\delta =1,$ $M[k;\ad]$ over $\epsilon =1$ and the
  pull-back of $M[k;\scl\gamma]$ to $\overline{T_{\psi}F}$ over $\delta =0.$
\end{theorem}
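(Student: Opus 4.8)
The plan is to transcribe the proofs of Theorems~\ref{GP.227} and~\ref{GP.101}, now performing the $\psi$-diagonal resolution on top of the semiclassical-$\phi$ resolution and using Lemma~\ref{GP.253} in the role played there by Theorem~\ref{GP.217}. First I would record that $M[k;\scl\phi]$ is a regular stretched product: this is Theorem~\ref{GP.101} applied with $\gamma=\mathrm{id}_M$ and $\psi=\phi$, for which \eqref{GP.99} reduces to \eqref{GP.280}; its interior is $(0,1)_\epsilon\times M^{[k]}_\phi$. Hence $[0,1]_\delta\times M[k;\scl\phi]$ is a generalized product, and by Lemma~\ref{GP.253} the collection $\{0\}_\delta\times\FC_\psi$ is p-clean and closed under intersection, so $M[k;\dscl]$ of \eqref{GP.243} is well defined by a size-order blow-up. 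All centres in the two stages lie over $\epsilon=0$ or over $\delta=0$, so $\inn(M[k;\dscl])=\Delta^\circ\times M^{[k]}_\phi$ and the fibre products of $\mathrm{id}\times\phi:\Delta^\circ\times M\to\Delta^\circ\times Y$ realise these interiors; thus once the $M[k;\dscl]$ are shown to be a generalized product they are a stretched product resolving $\Delta\times M^{[k]}_\phi$.

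By Lemma~\ref{GP.81} it remains to extend the structural diffeomorphisms, the projections $\Pi_k$ and the diagonals $D_{k-1,k}$ across the second blow-up. The factor exchanges of $M^{[k]}_\phi$ extend to $M[k;\scl\phi]$ (proof of Theorem~\ref{GP.101}), hence to $[0,1]_\delta\times M[k;\scl\phi]$, and they permute the elements of $\FC_\psi$ within each dimension since these are indexed by partitions of $J(k)$, so they lift to $M[k;\dscl]$. For $\Pi_k$ I would split $\FC_\psi(k)=\cG\sqcup\cR$ as in \eqref{GP.229} ($\cG$: those $D^\psi_{\FP}$ in which $k$ is a singleton; $\cR$: the rest), note that the preimage identity $\Pi_k^{-1}(\{0\}_\delta\times D^\psi_{\FP'})=\{0\}_\delta\times D^\psi_{\FP}$ holds for $D^\psi_{\FP}\in\cG$ (descending from Lemma~\ref{GP.350} for $F^{[k]}_\psi$ via $\gamma^{[k]}$, and preserved by the $\scl\phi$-blow-up), and apply Proposition~\ref{GP.352} successively to the blow-ups of $\cG$; after these, each element of $\cR$ is carried by the lifted $\Pi_k$ onto the front face of the corresponding $\cG$-blow-up in $M[k-1;\dscl]$, whereupon Proposition~\ref{GP.117} completes the lifting. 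For $D_{k-1,k}$ I would split the centres as $\cF_{k-1,k}\sqcup\cG\sqcup\cF$ exactly as at \eqref{GP.261} and run the reflection argument: $R_{k-1,k}$ fixes $\cF_{k-1,k}$, $\cF$ and $[0,1]_\delta\times D_{k-1,k}$ and permutes $\cG$, so after $\cF_{k-1,k}$ is blown up the elements of $\cG$ are disjoint from their images, hence from the lift of $[0,1]_\delta\times D_{k-1,k}$, and repeated use of Lemma~\ref{GP.356} identifies that lift with $M[k-1;\dscl]$. Regularity in the sense of Definition~\ref{GP.279} I would read off from the local normal forms for the $\psi$-blow-up obtained in the proof of Lemma~\ref{GP.253}, as in the adiabatic case.

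It then remains to identify the boundary generalized products of Proposition~\ref{GP.136}. Over $\delta=1$ no centre of the second stage meets the face, so it is $\{1\}_\delta\times M[k;\scl\phi]=M[k;\scl\phi]$. Over $\epsilon=1$ the semiclassical-$\phi$ centres are untouched, so $M[k;\scl\phi]$ restricts there to $M^{[k]}_\phi$ and the second stage restricts to $[[0,1]_\delta\times M^{[k]}_\phi;\{0\}_\delta\times\cD_\psi]$, which is exactly $M[k;\ad]$ of Definition~\ref{GP.98}. Over $\delta=0$ I would argue as in Propositions~\ref{GP.264} and~\ref{GP.266}: the first centre blown up at $\delta=0$ at level $k$ is the minimal $\psi$-diagonal, the image of the fibre product $M^{[k]}_\gamma$ in $M^{[k]}_\phi$; a local computation at its $\delta=0$ boundary identifies the normal data with $\overline{T_{\psi}F}$ and the residual collection of centres with the semiclassical-$\gamma$ diagonals carried fibrewise, whence that face is the pull-back of $M[k;\scl\gamma]$ along $\overline{T_{\psi}F}\to F$. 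Connectedness is preserved throughout.

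The step I expect to be the real obstacle is the compatibility of the two successive resolutions: one must verify that the prescribed order---all $\phi$-diagonals at $\epsilon=0$, then all $\psi$-diagonals at $\delta=0$---can be refined to, or locally interchanged with, a size order, so that the Corollary to Theorem~\ref{GP.323} is applicable; this is what legitimises the surgeries used above for $\Pi_k$ and $D_{k-1,k}$ across both stages and delivers the clean description of the $\delta=0$ face. Granted Lemma~\ref{GP.253}, this is the only genuinely new computation; the rest is a transcription of the arguments already given for $M[k;\scl]$ in Theorem~\ref{GP.227} and for $M[k;\ad]$ in Theorem~\ref{GP.101}.
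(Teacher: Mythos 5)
Your proposal is correct and follows essentially the same route as the paper: symmetry via the functorial action on the lifted $\psi$-diagonals, reduction by Lemma~\ref{GP.81}, the lift of $\Pi_k$ by splitting the $\psi$-centres according to whether $k$ is a singleton and applying Propositions~\ref{GP.352} and~\ref{GP.117} together with the Corollary to Theorem~\ref{GP.323}, and the lift of $D_{k-1,k}$ by the reflection argument of \eqref{GP.261} applied to the $\psi$-diagonals, with your identification of the boundary faces actually more explicit than the paper's. One clarification: the cross-stage reordering you flag as the main remaining obstacle is not in fact needed, because the definition \eqref{GP.243} is iterative --- Lemma~\ref{GP.253} (which you grant) already makes the second-stage collection p-clean and intersection-closed on $[0,1]_\delta\times M[k;\scl\phi]$, and every reordering used for $\Pi_k$ and $D_{k-1,k}$ takes place within that single collection, exactly as in Theorems~\ref{GP.227} and~\ref{GP.101}.
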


\begin{proof} Symmetry follows by noting that the permutations on
  $M[k;\scl\phi]$ map the $\psi$-fibre diagonals functorially and so lift
  under the blow-ups to diffeomorphisms.

Thus, by Lemma~\ref{GP.81}, it remains to show that the semiclassical
structural maps
\begin{equation}
\Pi_k^{\scl\phi}:[0,1]_\delta \times M[k;\scl\phi]\longrightarrow [0,1]_\delta \times M[k-1;\scl\phi]
\label{GP.245}\end{equation}
lift to a b-fibrations and the p-embeddings
\begin{equation}
D_{k-1,k}:[0,1]\times M[k-1;\scl\phi]\longrightarrow [0,1]\times M[k;\scl\phi]
\label{GP.246}\end{equation}
lift to a p-embeddings.

As before the first follows by applying Proposition~\ref{GP.182} to the
decomposition 
\begin{equation}
\FC=\cF\sqcup\cG
\label{GP.247}\end{equation}
into the $\psi$-diagonals not involving $k,$ i.e.\ in which it is a singleton, and
those in which it lies in one of the $\cP_l.$

The former are precisely the inverse images of the corresponding diagonals,
over $\delta =0,$ in $M[k-1;\ad]$ so the b-fibration in \eqref{GP.245}
lifts to a b-fibration 
\begin{equation}
F:[[0,1]\times M[k;\ad];\cF]\longrightarrow M[k-1;\dscl].
\label{GP.248}\end{equation}
Each element of $\cG$ is associated to a partition $\mathfrak{Q}\subset
J(k-1)$ by removing $k-1,$ $k$ or both from $\mathfrak{P}\subset J(k).$ The
image under $\Pi_k^{\ad}$ of the boundary face is then the corresponding
$D_{\mathfrak{Q}}\subset \{0\}\times M[k-1;\ad].$ It follows that after the
blow-up of $\cF$ these map to the corresponding front face of $[[0,1]\times
M[k-1;\ad];\cF].$

That $D_{k-1,k},$ the $\phi$-diagonal, is a p-submanifold of $M[k;\dscl]$
follows as in the adiabatic case.
\end{proof}

\providecommand{\bysame}{\leavevmode\hbox to3em{\hrulefill}\thinspace}
\providecommand{\MR}{\relax\ifhmode\unskip\space\fi MR }
\providecommand{\MRhref}[2]{%
  \href{http://www.ams.org/mathscinet-getitem?mr=#1}{#2}
}
\providecommand{\href}[2]{#2}

\end{document}

Removed 15 December, 2024

Writing the order explicitly as $F_j$ we set
  $\FC_j=\FC_{F_j}$ in \eqref{GP.322} and show by induction that the
  induced intersection order on $\FC_j$ is permissible and equivalent to a
  size order for resolution, i.e.\ induction over the number of elements in
  $\FC.$ The case $j=1$ is trivial.

If $j=2$ there are only two possibities, either $F_1\cap F_2=F_1$ or $F_2.$
Thus either $F_1\subset F_2$ or $F_2\subset F_1.$ In the first case the
induced order is a size order. In the second case the result follows from
Lemma~\ref{GP.83}.

In the general inductive step we assume the result known for $\FC_{j-1}.$
To show that the given intersection order on $\FC_j$ is permissible we must
show that, under the blow up of $\FC_{j-1},$ $F_j$ lifts to a
p-submanifold. By the inductive hypothesis we may blow up $\FC_{j-1}$ in
any size order and the resulting resolved space is the same. Numbering the
elements of $\FC_{j-1}$ in this order and denoting the last element of
$\FC_{j}$ as $G,$ consider the first element $F_i$ such that $F_i\cap
G\not=\emptyset.$ If there is no such element then the induction is
trivially extended. Otherwise $F_i\cap G$ is in $\FC_{j}$ and is contained
in $F_i$ but no earlier element of $\FC_{j-1},$ so it can only be $F_i$ or
$G.$ Thus
\begin{equation}
\text{either }F_i\subset G\Mor G\subset F_i.
\label{GP.325}\end{equation}
In the first case we may proceed with the resolution in size order until
all elements of $\FC_{j-1}$ of dimension strictly smaller than $G$ have
been blown up. At this stage the lifts of the remaining elements, including
$G$ form a p-clean collection closed under intersection. We may now ignore all
elements disjoint from $G,$ which includes any other elements of the same
dimension. The remaining elements must all intersect (since they contain
$G)$ and form, with $G,$ an intersection-closed collection in which $G$ is minimal.
Thus for this smaller collection we are reduced to the last case in
\eqref{GP.325} and proving the result for this collection implies the
inductive step.

Finally then we are reduced to the case that $\FC_{j-1}$ is an intersection
closed collection, with a unique minimal element $F_1$ and $G\subset F_1$
is last in order. To see that this is permissible we may proceed locally
near points of $G.$ Such a point has a product neighbourhood in $M$ of the form
\begin{equation}
B_1\times B_2\times G'
\label{GP.326}\end{equation}
where $G'$ is a ball in $G$ and the $B_i$ are balls (generally with
corners) such that locally
\begin{equation}
G=\{p_1\}\times \{p_2\}\times G',\ F_1=\{p_1\}\times B_2\times
G',\ F_i=F_i'\times B_2\times G',\ i>1
\label{GP.327}\end{equation}
where the $F_i'$ form an intersection-closed p-clean collection in $B_1$
containing $p_1,$ which represents $F_1.$

Now the blow up of $F_1$ replaces $B_1$ by a manifold with corners and a
distinguised new boundary hypersurface $\ff_1.$ The lift of $G$ under this
blow up is 
\begin{equation*}
\ff_1\times \{p_2\}\times G'
\label{GP.328}\end{equation*}
whereas the lifts of the $F_i$ are of the form 
\begin{equation*}
F_i''\times B_2\times G'.
\label{GP.329}\end{equation*}
Here the $F_i''$ are the lifts of the $F_i$ under blow up of $p_1.$ It
follows that after the blow up of $F_1$ the lift of $G$ is transversal to
the lift of each of the remaining $F_i.$ Applying Lemma~\ref{GP.83}
repeatedly, in the transversal case and finally in the case of inclusion
allows $G$ to be moved to the initial point, showing equivalence to the
resolution in size order and completing the inductive step.

Removed 15 December, 2024

\begin{lemma}\label{GP.184} In any permissible order of blow-up for a
  p-clean family, if the last element is initially disjoint from or
  transversal to all earlier elements, then the order in which it is blown
  up first, with the remainder in the induced order is also permissible and
  equivalent.
\end{lemma}

\begin{proof} Blow up is essentially local so disjoint elements lift to be
  disjoint. If two elements are transversal, the centre cannot contain both,
  since then it would have maximal dimension. If it contains one it is
  transversal to the other and then the lift of the latter is also equal to
  its preimage from which the transversality of the lifts follows.
\end{proof}

Removed: 16 December, 2024

\begin{proposition}\label{GP.84} If three submanifolds $F_i,$ $i=1,2,3$ meet
p-cleanly in a manifold $M$ and satisfy 
\begin{equation}
F_1\cap F_2\subset F_3\text{ but neither }F_1\subset F_3\text{ nor }F_2\subset
F_3
\label{GP.87}\end{equation}
then the lifts of $F_1$ and $F_2$ to $[M;F_3]$ are disjoint.
\end{proposition}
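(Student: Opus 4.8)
The plan is to localise the whole question to the front face of the blow‑down $\beta\colon[M;F_3]\to M$, where the lifts are described by Lemma~\ref{GP.356}, and then to read off disjointness in a single p‑clean coordinate chart.

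First I would dispose of the part away from the front face. Since $F_1$ and $F_2$ are connected and not contained in $F_3$, each $F_i\cap F_3$ is nowhere dense in $F_i$, so $F_i\setminus F_3$ is dense in $F_i$ and the lift $\tilde F_i=\overline{\beta^{-1}(F_i\setminus F_3)}$ has $\beta(\tilde F_i)=F_i$. Off the front face $\ff=\beta^{-1}(F_3)$ the map $\beta$ is a diffeomorphism and $\tilde F_i=\beta^{-1}(F_i\setminus F_3)$ there, so $(\tilde F_1\cap\tilde F_2)\setminus\ff=\beta^{-1}\big((F_1\cap F_2)\setminus F_3\big)=\varnothing$ by hypothesis. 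Hence it suffices to show that $\tilde F_1$ and $\tilde F_2$ do not meet on $\ff$, and since $\ff$ fibres over $F_3$ this can be checked over each $q\in F_3$; moreover a point of $\tilde F_i$ over $q$ forces $q\in\beta(\tilde F_i)=F_i$, so only $q\in F_1\cap F_2\subset F_3$ — points lying on all three of the $F_i$ — need be considered.

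Next, near such a $q$ I would apply Lemma~\ref{GP.356}: the lift $\tilde F_i$ is the blow‑up $[F_i;F_i\cap F_3]$, so (using that $F_i$ and $F_3$ meet p‑cleanly, which makes the inclusion $N_q(F_i\cap F_3;F_i)\hookrightarrow N_qF_3$ injective) its trace on the fibre of $\ff$ over $q$ is the inward part of the unit sphere in the image of $T_qF_i$ inside $N_qF_3$. Choosing coordinates $x_j\ge0$, $y_l$ near $q$ as provided by p‑cleanness, so that each $F_i$ is cut out by $\{x_j=0:j\in A_i\}$ together with some linear conditions on the $y_l$, the content of $F_1\cap F_2\subset F_3$ is that $A_3\subset A_1\cup A_2$ (test the inclusion at points of $F_1\cap F_2$ with $x_j>0$ for every $j\notin A_1\cup A_2$). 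Over $q$ the fibre of $\ff$ is the inward unit sphere of $N_qF_3$, whose coordinates are the $x_j$ with $j\in A_3$, and the image of $T_qF_i$ in $N_qF_3$ is the coordinate subspace $\{x_j=0:j\in A_i\cap A_3\}$. Hence the trace of $\tilde F_1$ over $q$ lies in $\{x_j=0:j\in A_1\cap A_3\}$, that of $\tilde F_2$ in $\{x_j=0:j\in A_2\cap A_3\}$, and their intersection lies in $\{x_j=0:j\in(A_1\cup A_2)\cap A_3\}=\{x_j=0:j\in A_3\}$, which meets the unit sphere in the empty set. So $\tilde F_1\cap\tilde F_2\cap\ff=\varnothing$, and with the first step the lifts are disjoint.

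The step I expect to be the crux is the last one: checking that in a p‑clean chart the inverse images of $F_1$ and $F_2$ reach the spherical normal fibre of $F_3$ only along the complementary coordinate faces indexed by $A_1\cap A_3$ and $A_2\cap A_3$. Granting this, the hypothesis $F_1\cap F_2\subset F_3$ — equivalently $A_3\subset A_1\cup A_2$ — is exactly what makes those two faces meet only at the excluded origin; blowing up $F_3$ resolves precisely its normal directions, so all the relevant information sits there, and it is the corner structure along $F_3$ that rigidifies the chart enough for "approaching along a coordinate face" to be the only option.
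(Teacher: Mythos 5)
Your reduction to the front face, and the use of Lemma~\ref{GP.356} to identify the trace of each lift over a point $q\in F_1\cap F_2$ with the (partial) sphere of the image of $T_qF_i$ in $N_qF_3$, runs parallel to the paper's proof (which passes to a local linear model and compares the spheres of the images of $F_1$ and $F_2$ in $V/F_3$). But the decisive step of your chart computation is wrong: after invoking p-clean coordinates you silently drop the interior conditions. Each $F_i$ is cut out by $\{x_j=0: j\in A_i\}$ \emph{together with linear functions of the $y_l$}, so $N_qF_3$ is not coordinatized by the $x_j$ with $j\in A_3$ alone --- it also contains the conormal directions of the $y$-linear equations defining $F_3$ --- and the image of $T_qF_i$ in $N_qF_3$ is not the coordinate face $\{x_j=0: j\in A_i\cap A_3\}$. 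Likewise ``$F_1\cap F_2\subset F_3$, equivalently $A_3\subset A_1\cup A_2$'' is not an equivalence: $A_3\subset A_1\cup A_2$ is only the $x$-part of one implication, and the $y$-part of the hypothesis is exactly what your final intersection count never uses. As written, your argument proves disjointness only in the special case that all three submanifolds are locally boundary faces; in the situations where the proposition is needed in this paper the centres are interior or mixed p-submanifolds (diagonals, or $\{0\}\times D$ in the semiclassical construction), where the $A_i$ may even be empty and your criterion is vacuous.

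This is not a removable technicality, because the entire content of the statement is the linear-algebra assertion that the images of $T_qF_1$ and $T_qF_2$ in $N_qF_3$ meet only at $0$, i.e.\ that $(T_qF_1+T_qF_3)\cap(T_qF_2+T_qF_3)=T_qF_3$ at points of $F_1\cap F_2$; this is what the paper expresses by saying the lifts meet the front face in ``spheres in non-trivial subspaces which only meet at the origin.'' In the interior directions this does not follow from the combinatorics of the $A_i$: for the three partial diagonals $F_1=\{z_1=z_2\}$, $F_2=\{z_1=z_3\}$, $F_3=\{z_2=z_3\}$ in a triple product the hypotheses \eqref{GP.87} hold, all $A_i$ are empty, and the images of $T_qF_1$ and $T_qF_2$ in $N_qF_3$ are \emph{all} of $N_qF_3$, so an argument that never engages the interior defining functions cannot detect what happens on the front face there. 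A correct proof must establish (or explicitly invoke) the trivial-intersection property of the images --- it is automatic, for instance, when $F_3=F_1\cap F_2$, which is the case actually used in Proposition~\ref{GP.82} and in the resolution arguments, since then cleanness gives $T_qF_1\cap T_qF_2=T_qF_3$ --- but that is precisely the step your write-up identifies as the crux and then passes over.
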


\begin{proof} The lifts of $F_1$ and $F_2$ are the closures of their
  preimages, so are disjoint away from the front face, since it contains
  the preimage of the intersection. So it suffices to show that these lifts
  do not have a common point in the front face above any chosen point of
  $F_1\cap F_2.$ We may introduce coordinates near this in which $M=V$ is
  linear and the manifolds are linear subspaces.  The front face is the
  (partial) sphere in the normal space $V/F_3.$ Since $F_1\cap F_2\subset
  F_3$ the intersections of the lifts of $F_1$ and $F_2$ with the front
  face are spheres in non-trivial subspaces which only meet at the
  origin. Thus they are disjoint.
\end{proof}